\theoremstyle{definition}
\newtheorem{defi}{Definition}[section] 
\newtheorem{definition}{Definition}[section] 
\theoremstyle{plain}
\newtheorem{lemma}[defi]{Lemma} 
\newtheorem{prop}[defi]{Proposition}
\newtheorem{proposition}[defi]{Proposition}
\newtheorem{conjecture}[defi]{Conjecture}
\newtheorem{theorem}[defi]{Theorem}
\newtheorem{corollary}[defi]{Corollary}
\newtheorem{question}[defi]{Question}
\theoremstyle{remark}
\newtheorem{remark}[defi]{Remark}
\newtheorem{exemple}[defi]{Example}
\newtheorem*{claim}{Claim}
\newtheorem*{step1}{Step 1}
\newtheorem*{step2}{Step 2}
\newtheorem*{step3}{Step 3}
\DeclareMathOperator{\id}{id}
\newcommand{\R}{\mathbb{R}}
\newcommand{\Q}{\mathbb{Q}}
\newcommand{\Z}{\mathbb{Z}}
\newcommand{\N}{\mathbb{N}}
\newcommand{\NN}{\mathcal{N}}
\newcommand{\smprod}[2]{\mbox{\footnotesize$\displaystyle\prod\limits_{#1}^{#2}$}}
\newcommand{\tmoplus}[2]{\mbox{$\textstyle \bigoplus\limits_{#1}^{#2}$}}
\newcommand{\lmfrac}[2]{\mbox{\small$\displaystyle\frac{#1}{#2}$}}
\newcommand{\smfrac}[2]{\mbox{\footnotesize$\displaystyle\frac{#1}{#2}$}} 
\newcommand{\tmfrac}[2]{\mbox{\large$\frac{#1}{#2}$}}
\newcommand{\tmsum}[2]{\mbox{$\textstyle \sum\limits_{#1}^{#2}$}}
\newcommand{\ttmsum}[2]{\mbox{\footnotesize{$\textstyle \sum\limits_{#1}^{#2}$}}} 
\def\doublesms{\setminus\hspace{-0.15cm}\setminus}
\def\bnm{\begin{enumerate}} 
\def\enm{\end{enumerate}}
\def\ba{\begin{array}} 
\def\ea{\end{array}} 
\def\bpp{\begin{pmatrix}}
\def\epp{\end{pmatrix}}
\def\ker{\operatorname{ker}}
\def\hom{\operatorname{Hom}}
\def\sms{\setminus}
\def\op{\operatorname}
\def\us{\underset}
\def\wti{\widetilde}
\def\what{\widehat}
\def\tautwo{\tau^{(2)}}
\def\vol{\op{vol}}
\def\JJ{\mathcal{J}}
\title[The leading coefficient of the $L^2$-Alexander torsion]{The leading coefficient of the $L^2$-Alexander torsion}
\author{Fathi Ben Aribi, Stefan Friedl and Gerrit Herrmann}
\address{Universit\'e de Gen\`eve, Section de math\'ematiques, 2-4 rue du Li\`evre, Case postale 64 1211 Gen\`eve 4, Suisse}
\email{fathi.benaribi@unige.ch}
\address{Fakult\"at f\"ur Mathematik\\ Universit\"at Regensburg\\ Germany}
\email{sfriedl@gmail.com}
\address{Fakult\"at f\"ur Mathematik\\ Universit\"at Regensburg\\ Germany}
\email{gerrit.herrmann@mathematik.uni-regensburg.de}
\begin{document}

\renewcommand{\proofname}{Proof}

\subjclass[2010]{57M25; 57M27}
\keywords{$L^2$-invariants; $3$-manifolds; Thurston norm}

\maketitle

\begin{abstract}
We give upper and lower bounds on the leading coefficients of the $L^2$-Alexander torsions of a $3$-manifold  $M$ in terms of hyperbolic volumes and of relative $L^2$-torsions of  sutured manifolds  obtained by cutting $M$ along certain surfaces.

We prove that for numerous families of knot exteriors the lower and upper bounds are equal, notably for exteriors of 2-bridge knots.  In particular we compute the leading coefficient explicitly for 2-bridge knots.
\end{abstract}

%=================================================
\section{Introduction}
We start out with introducing the following convention: throughout the paper we assume that all manifolds are  compact and oriented. 
By a hyperbolic 3-manifold we always mean a 
3-manifold with empty or toroidal boundary such that the interior admits a complete hyperbolic metric.

%========================
\subsection{Introduction to the $L^2$-Alexander torsion}
Let $N$ be  an irreducible 3-manifold with empty or toroidal boundary and let $\phi\in H^1(N;\R)$. 
The \emph{$L^2$-Alexander torsion $\tau^{(2)}(N,\phi)$} is a function $\R_{>0}\to \R_{\geqslant 0}$ that was introduced by Dubois, L\"uck and the second author~\cite{DFL16}. We will recall
the definition in Section~\ref{sec:prel}.
The $L^2$-Alexander torsion of $(N,\phi)$ is well-defined up to multiplication by a function of the form $t\mapsto t^k$ for some $k\in \R$. 
In the following, given two functions $f(t),g(t)\colon \R_{>0}\to \R$ we write $f(t)\doteq g(t)$ if there exists a $k\in \R$ such that $f(t)=t^k\cdot g(t)$ for all $t\in \R_{>0}$.

Perhaps the most interesting example is to consider a knot $K\subset S^3$. We denote by $E_K=S^3\sms \nu K$ the knot exterior, i.e.\ the complement of an open tubular neighborhood of $K$. Furthermore we denote by $\phi_K\in H^1(E_K;\Z)\cong \Z$ a generator. The $L^2$-Alexander torsion $\tau^{(2)}(K):=\tau^{(2)}(E_K,\phi_K)$ was initially introduced by Li-Zhang~\cite{LZ06} 
and has been known under the name of $L^2$-Alexander invariant (up to multiplication by a function of the form $t\mapsto \max\{1,t\}$).

From the definition using $L^2$-torsions the $L^2$-Alexander torsion might appear to be a rather mysterious invariant, but as is argued in \cite{DFL15b}, it can and should be viewed as a sibling of the more familiar twisted Alexander polynomials~\cite{Wa94,FV10} and of the higher-order Alexander polynomials~\cite{Co04}. 

Over the last few years the $L^2$-Alexander torsion has been the focus of intensive research.
In the following theorem we summarize some of the key results regarding the $L^2$-Alexander torsion.

\begin{theorem}\label{thm:previous}
Let $N$ be an irreducible 3-manifold  with empty or toroidal boundary and  $\phi\in H^1(N;\R)$. The following statements hold:
\bnm[font=\normalfont]
\item The evaluation of $\tautwo(N,\phi)$ at $t=1$ equals $\exp(\vol(N)/6\pi)$, where the volume $\vol(N)$ of $N$ is defined as the sum of the volumes of the hyperbolic pieces in the JSJ-decomposition of $N$.
\item If $N=N_1\sqcup N_2$ is the disjoint union of two 3-manifolds, then 
$$\tautwo(N,\phi)=\tautwo(N_1,\phi|_{N_1})\cdot \tautwo(N_2,\phi|_{N_2}).$$
\item 
If $N$ is obtained from a $($possibly disconnected$)$  3-manifold $M$ by gluing $M$ to itself via pairing up incompressible tori components of its boundary $\partial M$, then $\tautwo(N,\phi)=\tautwo(M,\phi|_{M})$.
\item The $L^2$-Alexander torsion $\tautwo(N,\phi)\colon \R_{>0}\to \R_{\geqslant 0}$ takes values in $\R_{>0}$.
\item The $L^2$-Alexander torsion $\tautwo(N,\phi)\colon \R_{>0}\to \R_{>0}$ is continuous.
\item If $\phi$ is rational, i.e.\ if $\phi\in H^1(N;\Q)$, then the $L^2$-Alexander torsion is symmetric in the sense that
\[ \hspace{1cm}\tautwo(N,\phi)(t) \,\,\doteq \,\,\tautwo(N,\phi)(t^{-1}).\]
\enm
\end{theorem}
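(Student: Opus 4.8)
The plan is to derive the symmetry from Poincar\'e--Lefschetz duality together with the invariance of Fuglede--Kadison determinants --- and hence of $L^2$-torsion --- under passing to the dual (adjoint) chain complex; the substitution $t\mapsto t^{-1}$ will be produced by the canonical involution $g\mapsto g^{-1}$ on the group ring. By part~(2) we may assume $N$ connected, and by part~(4) every $L^2$-torsion occurring below is of determinant class, so the relation $\doteq$ is available and can be manipulated freely. Write $\pi=\pi_1(N)$, and recall from Section~\ref{sec:prel} that, after rescaling $\phi$ to a primitive integral class, $\tautwo(N,\phi)(t)$ is, up to $\doteq$, the $L^2$-torsion of the finite Hilbert $\mathcal{N}(\pi)$-chain complex $C^{(2)}_*(N,\phi)(t)$ obtained from $C_*(\widetilde N)\otimes_{\Z\pi}\ell^2(\pi)$ by replacing each boundary-matrix entry $\sum_g a_g g$ with the operator $\kappa_t\!\big(\sum_g a_g g\big):=\sum_g a_g\, t^{\phi(g)}g$ on $\ell^2(\pi)$; write $C^{(2)}_*(N,\partial N,\phi)(t)$ for the relative complex.

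\emph{Step 1: reduction to the pair.} If $\partial N=\emptyset$ there is nothing to do. Otherwise, since $\phi$ is rational its restriction to each boundary torus $T$ is rational, so $T$ carries a product structure $S^1\times S^1$ for which $\phi|_T$ is pulled back from one factor (possibly zero); as $\chi(S^1)=0$, the product formula for $L^2$-torsion gives $\tautwo(T,\phi|_T)\doteq 1$, hence $\tautwo(\partial N,\phi|_{\partial N})\doteq 1$. Inserting this into the multiplicativity of $L^2$-torsion for the short exact sequence
\[0\longrightarrow C^{(2)}_*(\partial N,\phi|_{\partial N})\longrightarrow C^{(2)}_*(N,\phi)\longrightarrow C^{(2)}_*(N,\partial N,\phi)\longrightarrow 0\]
gives $\tautwo(N,\phi)\doteq\tautwo(N,\partial N,\phi)$.

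\emph{Step 2: duality.} Poincar\'e--Lefschetz duality for the compact oriented $3$-manifold $N$ provides a simple chain homotopy equivalence $C_*(\widetilde N,\partial\widetilde N)\simeq C^{\,3-*}(\widetilde N)$ of finite free $\Z\pi$-complexes, the cochain complex being made into a complex of left modules via the involution $\overline{(\,\cdot\,)}\colon\Z\pi\to\Z\pi$, $g\mapsto g^{-1}$. Tensoring with $\ell^2(\pi)$ and inserting the $t$-weights along $\phi$, the key point is that the $t$-weighted codifferentials of $C^{\,3-*}(\widetilde N)$ are precisely the adjoints of the $t^{-1}$-weighted differentials of $C_*(\widetilde N)$: because $\phi(g^{-1})=-\phi(g)$ one has, for $s>0$, the identity $\overline{\kappa_s(x)}=\kappa_{s^{-1}}(\overline{x})$, so the Hilbert-space adjoint of $\lambda(\kappa_t(x))$ on $\ell^2(\pi)$ is $\lambda(\overline{\kappa_t(x)})=\lambda(\kappa_{t^{-1}}(\overline{x}))$, and on the matrix level the adjoint simultaneously transposes and involutes the entries, which is exactly the formation of the dual differential. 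Since the Fuglede--Kadison determinant is unchanged under adjoints, and reversing the grading of a length-$3$ complex introduces no inversion in the $L^2$-torsion ($\dim N$ being odd), we obtain
\[\tautwo(N,\partial N,\phi)(t)\ \doteq\ \tautwo(N,\phi)(t^{-1}),\]
and combining with Step~1 yields $\tautwo(N,\phi)(t)\doteq\tautwo(N,\phi)(t^{-1})$.

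I expect the main obstacle to be bookkeeping rather than conceptual: one must set up the Poincar\'e--Lefschetz equivalence over $\Z\pi$ compatibly with the $\ell^2$-completion and with the weighting $\kappa_t$, and track how the involution, the transposition of matrices, and the grading reversal interact with the $\doteq$-ambiguity and with the (here vanishing) sign in the torsion of a dualized complex. One also has to justify the short-exact-sequence multiplicativity used in Step~1 for complexes that are of determinant class but need not be $L^2$-acyclic, which rests on the companion statement controlling the torsion of the associated long exact $L^2$-homology sequence; granting part~(4), this is routine.
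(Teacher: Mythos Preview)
Your proposal addresses only part~(6), which is reasonable since the paper does not prove Theorem~\ref{thm:previous} itself but merely attributes each part to the literature; for~(6) it says the statement ``is a relatively straightforward consequence of Poincar\'e Duality'' and cites \cite{DFL15a}. Your outline --- reduce to the pair $(N,\partial N)$ using that the toroidal boundary contributes trivially, then apply Poincar\'e--Lefschetz duality and observe that the involution $g\mapsto g^{-1}$ interchanges $\kappa_t$ with $\kappa_{t^{-1}}$ --- is exactly the argument carried out in \cite{DFL15a}, so your approach coincides with the paper's intended one.

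Two small remarks on bookkeeping. First, in Step~1 the complexes $C^{(2)}_*(\partial N,\phi|_{\partial N})$ must be taken over $\ell^2(\pi_1(N))$ rather than over $\ell^2(\pi_1(T))$ for the short exact sequence to make sense; you are implicitly using Proposition~\ref{prop:op_det}~(5) (induction invariance of the Fuglede--Kadison determinant) together with the incompressibility of the boundary tori to pass between the two. Second, your concern about multiplicativity for non-acyclic complexes is unnecessary here: part~(4) says $\tautwo(N,\phi)(t)>0$, which by definition means the complex is weakly acyclic and of determinant class, and the same holds for the torus and hence for the relative pair, so the standard multiplicativity applies without the long-exact-sequence correction term.
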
 

Here the first statement follows from the definitions and the work of L\"uck-Schick~\cite{LS99}. The second statement holds by definition. The proof of the third statement is basically identical to the proof of \cite[Theorem~5.5]{DFL16}.
The fourth statement was proved independently by Liu~\cite[Theorem~1.2]{Liu17} and L\"uck~\cite[Theorem~7.5]{Lu15}. The fifth statement was proved by Liu~\cite[Theorem~1.2]{Liu17}. Finally the last statement, which is a relatively straightforward consequence of Poincar\'e Duality, was proved by Dubois, L\"uck and the second author~\cite[Theorem~1.1]{DFL15a}. 
Note that we assume $N$ to be irreducible because the $L^2$-torsions are never defined for reducible 3-manifolds.

Given $(N,\phi)$ as above it is interesting to study the behavior of the $L^2$-Alexander torsion $\tau(N,\phi)(t)$ as $t\to \infty$. To formulate the known results we need to recall the definition of the Thurston norm of a connected irreducible 3-manifold $N$. 
Recall that for each $\phi\in H^1(N;\mathbb{Z})$ there is a properly embedded surface $\Sigma$ that represents $\phi$, via the Poincar\'e duality isomorphism 
$\op{PD}\colon H_2(N,\partial N;\Z) \to H^1(N;\mathbb{Z})$. Following~\cite{Th86} we define the \emph{Thurston norm} of a class $\phi\in H^1(N;\mathbb{Z})$ as 
 \[
x_N(\phi)\,:=\,\min \big\{ \chi_-(\Sigma) \,|\, \mbox{$\Sigma$ is a properly embedded surface with $\op{PD}([\Sigma]) = \phi$} \big\},
\]
where given a surface $\Sigma$ with components $\Sigma_1,\dots,\Sigma_k$ we define its \textit{complexity} as
\[\chi_-(\Sigma)\,\,:=\,\,\tmsum{i=1}{k} \max\{-\chi(\Sigma_i),0\}.\]
Thurston~\cite{Th86} showed that $x_N$ is indeed a seminorm on $H^1(N;\Z)$. It follows easily that $x_N$ can be extended to a seminorm on $H^1(N;\R)$ which we denote again by $x_N$. Two natural cases jump to mind:

\bnm
\item First of all, if $K\subset S^3$ is a non-trivial knot, then a straightforward argument implies that 
$x_{E_K}(\phi_K)=2g(K)-1$, where $g(K)$ denotes the minimal genus of a Seifert surface of $K$.
\item If $\phi\in H^1(N;\Z)$ is a fibered class, which means that there exists a surface bundle $p\colon N\to S^1$ such that $p_*=\phi\in \hom(\pi_1(N),\Z)\cong H^1(N;\Z)$, then  by \cite[Theorem~3]{Th86} we have $x_N(\phi)=\chi_-(F)$, where $F$ denotes the fiber of the surface bundle.
\enm

Now we can formulate the following theorem which supplements Theorem~\ref{thm:previous}. 

\begin{theorem}\label{thm:previous2}
Let $N\ne S^1\times D^2$ be a connected irreducible 3-manifold  with empty or toroidal boundary and let $\phi\in H^1(N;\R)$. The following statements hold:
\bnm[font=\normalfont]
\item If $N$ is a graph manifold, then $\tautwo(N,\phi)(t)\doteq \max\left \{1,t^{x_N(\phi)}\right \}$.
\item If $\phi$ is an integral fibered class, then there exists a $T\geqslant 1$ such that 
\[ \hspace{1cm} \tautwo(N,\phi)(t)\,\,\doteq\,\, \left\{ \ba{rl} t^{x_N(\phi)}, &\mbox{ if }t>T, \\ 1,&\mbox{ if }t<\frac{1}{T}.\ea\right.\]
In fact one can take $T$ to be the entropy of the monodromy of the fibration.
\item For any representative $\tau$ of $\tautwo(N,\phi)$ we have
\[ \hspace{1cm} \lim_{t \to \infty} \smfrac{\ln(\tau(t))}{\ln(t)}-\lim_{t \to 0^+} \smfrac{\ln(\tau(t))}{\ln(t)}\,\,=\,\, x_N(\phi). \]
In particular both limits on the left hand side exist.
\item There exists a $C(N,\phi)\in \R_{>0}$ such that for any representative $\tau$ of $\tautwo(N,\phi)$ there exists a $k\in \R$ with
\[ \hspace{1cm} \lim_{t \to \infty} \smfrac{C(N,\phi)\cdot t^k}{\tau(t)}\,\,=\,\,1.\]
\enm
\end{theorem}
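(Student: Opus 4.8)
The plan is to treat the four statements in turn, reserving the bulk of the work for (4). For (1) I would use the disjoint-union and gluing formulas of Theorem~\ref{thm:previous}(2)--(3) to cut the graph manifold $N$ along its JSJ tori into Seifert-fibered pieces $N_1,\dots,N_r$, reducing everything to computing each $\tautwo(N_i,\phi|_{N_i})$. On a Seifert piece the $S^1$-action makes the relevant $\ell^2$-chain complex a product of the (orbifold) base complex with a circle, so the Fuglede--Kadison determinants that occur are of ``polynomial type'' (reflecting $\vol(N_i)=0$), and one obtains $\tautwo(N_i,\phi|_{N_i})(t)\doteq\max\{1,t^{x_{N_i}(\phi|_{N_i})}\}$; the behaviour of the Thurston norm under the JSJ decomposition of a graph manifold (Eisenbud--Neumann) then makes the exponents add up to $x_N(\phi)$. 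For (2) I would realize $N$ as the mapping torus of the monodromy $\varphi\colon F\to F$ and invoke the $L^2$-torsion formula for mapping tori (an $L^2$-Wang sequence): $\tautwo(N,\phi)(t)$ becomes the Fuglede--Kadison determinant of an operator of the shape $\id-t^{\pm1}A_\varphi$ on a finite direct sum of copies of $\ell^2(\pi_1 N)$, whose norm is controlled by the entropy $\lambda$ of $\varphi$. For $t>\lambda$ (resp.\ $t<\lambda^{-1}$) this operator is invertible and its determinant collapses to the monomial $t^{x_N(\phi)}$ (resp.\ to $1$); this is the Dubois--Friedl--L\"uck argument.

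For (3), the inequality $\limsup_{t\to\infty}\smfrac{\ln\tau(t)}{\ln t}-\liminf_{t\to0^+}\smfrac{\ln\tau(t)}{\ln t}\geqslant x_N(\phi)$ would come from cutting $N$ along a taut surface realizing $x_N(\phi)$ and combining the gluing behaviour of the $L^2$-torsion with monotonicity of $L^2$-Euler characteristics (Friedl--L\"uck), while the matching upper bound is Liu's Mayer--Vietoris estimate along a norm-minimizing surface; continuity of $\tautwo(N,\phi)$ (Theorem~\ref{thm:previous}(5)) together with convexity of $s\mapsto\ln\tau(e^s)$ promotes the $\limsup/\liminf$ to genuine one-sided limits. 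For (4) the first and most delicate task is to establish that the leading coefficient exists at all, i.e.\ that $\tau(t)/t^{x^+}$ converges as $t\to\infty$, where $x^+:=\lim_{s\to\infty}f(s)/s$ and $f(s):=\ln\tau(e^s)$ (finite and continuous by Theorem~\ref{thm:previous}(4)--(5)). The natural route is the convexity of $f$ (established following Liu's analysis of the degree function): by (3), $f$ has asymptotic slopes $x^-\leqslant x^+$ with $x^+-x^-=x_N(\phi)$, convexity forces $f'(s)$ to increase to $x^+$, so $f(s)-x^+s$ is non-increasing, and once it is known to be bounded below it converges to a real number $c^+$. One checks directly that $c^+$ is unchanged when $\tau$ is replaced by $t^{k_0}\tau$, so $C(N,\phi):=e^{c^+}$ is well defined and $\lim_{t\to\infty}C(N,\phi)\,t^{k}/\tau(t)=1$ with $k=x^+$.

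For the bounds in (4a): since $f(s)-x^+s\geqslant c^+$ for every $s$, evaluating at $s=0$ and using $f(0)=\ln\tautwo(N,\phi)(1)=\vol(N)/6\pi$ (Theorem~\ref{thm:previous}(1)) gives $c^+\leqslant\vol(N)/6\pi$, hence $C(N,\phi)\leqslant e^{\vol(N)/6\pi}$; the lower bound $c^+\geqslant0$ is equivalent (in the normalization with $x^-=0$) to $\tautwo(N,\phi)(t)\geqslant\max\{1,t^{x_N(\phi)}\}$, restricted to $t\geqslant1$. For (4b) I would use monotonicity to rewrite $c^+(\phi)=\inf_{s\in\R}\bigl(f_\phi(s)-x^+(\phi)\,s\bigr)$ with $f_\phi(s)=\ln\tautwo(N,\phi)(e^s)$; the map $\phi\mapsto f_\phi(s)$ is continuous in the cohomology class for each fixed $s$, and $\phi\mapsto x^+(\phi)$ is continuous because $x^+=\tfrac12\bigl(x_N+a\bigr)$ where $a(\phi)$ is the symmetry defect from Theorem~\ref{thm:previous}(6), which is linear in $\phi$ (on rational classes) hence continuous. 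Thus $\phi\mapsto c^+(\phi)$ is an infimum of continuous functions, so upper semicontinuous, and composing with the increasing continuous map $\exp$ yields upper semicontinuity of $\phi\mapsto C(N,\phi)$.

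I expect the main obstacle to be the lower bound $C(N,\phi)\geqslant1$. It does \emph{not} follow from convexity together with the value at $t=1$, since a convex function with the prescribed asymptotic slopes can dip below the line $s\mapsto x^+s$ over a bounded range; one genuinely needs the sharp estimate $\tautwo(N,\phi)(t)\geqslant\max\{1,t^{x_N(\phi)}\}$ for the suitably normalized representative. I would obtain this either from the Friedl--L\"uck $L^2$-Euler-characteristic inequalities, or by approximating $\tautwo(N,\phi)$ by twisted Alexander polynomials of finite covers and using that the leading coefficient of a nonzero integral Laurent polynomial has absolute value $\geqslant1$, so that after taking $n$-th roots and passing to the limit the leading coefficient of $\tautwo$ is $\geqslant1$. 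This approximation step, together with the continuity of $\phi\mapsto\tautwo(N,\phi)(t)$ away from integral classes needed in (4b), is where the real work lies; everything else is a reduction to prior results.
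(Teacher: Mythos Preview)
The paper does not prove Theorem~\ref{thm:previous2}: it is a survey statement, and immediately after it the authors attribute (1) to \cite{Her16}, (2) to \cite{DFL15a}, and (3)--(4) to Liu~\cite{Liu17} (with \cite{FL15} for rational~$\phi$), explicitly noting that the proofs of (3) and (4) rely on the virtual fibering work of Agol, Przytycki--Wise and Wise. So there is no in-paper argument to compare against; what you have written is a sketch of how one might reconstruct the cited proofs.

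Your sketches for (1) and (2) are broadly in line with what \cite{Her16} and \cite{DFL15a} actually do. The serious gap is in (4). You base the existence of the leading coefficient on the claim that $f(s)=\ln\tau(e^s)$ is convex, so that $f(s)-x^+s$ is monotone and hence converges once bounded below. Convexity of this function is \emph{not} established in \cite{Liu17} and, to my knowledge, is not known in general; Liu proves the weaker assertion that $\tau$ is eventually monomial (``asymptotically monomial''), and his route to that is precisely through virtual fibering: pass to a finite cover in which $\phi$ lies on the boundary of a fibered cone, use the fibered case (your part (2)) together with continuity and the multiplicativity of $\tau^{(2)}$ under finite covers, and then push the asymptotics back down. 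Without that input your monotonicity-of-$f(s)-x^+s$ step has no foundation, and the same missing ingredient undermines the lower bound $C(N,\phi)\geqslant 1$, which you correctly flag as the hard point.

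Your argument for (4b) also leans on a step that needs more care: you want $\phi\mapsto x^+(\phi)$ continuous via the symmetry statement of Theorem~\ref{thm:previous}(6), but that symmetry is only stated for rational $\phi$, and extending the ``symmetry defect'' linearly to all of $H^1(N;\R)$ is exactly the kind of thing that can fail (indeed the paper points out, citing \cite{Liu17}, that $\phi\mapsto C(N,\phi)$ is in general \emph{not} continuous). Liu's upper semicontinuity argument again goes through the virtual-fibering/asymptotic-monomial machinery rather than through a clean infimum-of-continuous-functions formula.
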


The first statement was proved by the third author~\cite[Corollary~1.2]{Her16}, extending earlier work of Dubois-Wegner~\cite{DW15} and the first author~\cite{BA16a}. The second statement is proved in~\cite[Theorem~1.3]{DFL15a}. The third statement was proved by Liu~\cite[Theorem~1.2]{Liu17}. For rational $\phi$, i.e.\ for $\phi\in H^1(N;\Q)$, the statement was independently obtained by the second author and L\"uck~\cite[Theorem~0.1]{FL15}. Finally the last statement is again due to Liu~\cite[Theorem~1.2]{Liu17}. Both proofs of the third statement relies in both cases on the work of Agol~\cite{Ag08,Ag13}, Przytycki-Wise~\cite{PW12} and Wise~\cite{Wi12}.

\bigskip

\begin{definition}
Let $N$ be a connected irreducible 3-manifold  with empty or toroidal boundary and let $\phi\in H^1(N;\R)$. The number $C(N,\phi)\in \R_{>0}$ of Theorem \ref{thm:previous2} (4) will be referred as  the \emph{leading coefficient of $\tautwo(N,\phi)$}.
\end{definition}

The following proposition lists several properties of the leading coefficient, following from the definitions or from the work of \cite{Liu17}.

\begin{proposition}\label{prop:leading}
	Let $N$ be a connected irreducible 3-manifold  with empty or toroidal boundary and let $\phi\in H^1(N;\R)$.
	\bnm[font=\normalfont]
	\item If $\phi=0$ then $C(N,\phi)=\tautwo(N)$.
	\item If $\phi$ is a fibered class then $C(N,\phi)=1$.
	\item For any $r \in \R^*$, $C(N,r \phi) = C(N,\phi)$.
	\item The function $H^1(N;\R)\to \R$ given by $\phi\mapsto C(N,\phi)$ is upper semicontinuous.
	\item The leading coefficient $C(N,\phi)$ lies in the interval $\left [1,e^{\vol(N)/6\pi}\right ]$.
	\enm
\end{proposition}

Proposition \ref{prop:leading} (2) may remind the reader of the similar monicity of the Alexander polynomial of a fibered knot.
Note also that Proposition \ref{prop:leading} (1), (3) and (4) (respectively value at $0$, constancy on rays and upper semi-continuity) together with Theorem \ref{thm:previous} (1) imply the upper bound of Proposition \ref{prop:leading} (5).
Finally we point out that Liu~\cite[Chapter~9]{Liu17} shows that the function $\phi\mapsto C(N,\phi)$ is in general not continuous.

\bigskip

%========================
\subsection{The main results}
In this paper we are mostly concerned with the following question.

\begin{question}
Given a connected irreducible 3-manifold $N$  with empty or toroidal boundary and $\phi\in H^1(N;\Z)$, how can we express $C(N,\phi)$ in terms of the topology and geometry of $N$?
\end{question}

Before we can state our main theorem we need to introduce some notation.
Let $\Sigma$ be a properly embedded surface in an irreducible 3-manifold $N$.
\bnm
\item We say that $\Sigma$ is \emph{Thurston norm minimizing} if $x_N([\Sigma])=\chi_-(\Sigma)$ and if no component of $\Sigma$ is a sphere, a disk, a compressible torus or a boundary parallel annulus. 
If $N\ne S^1\times D^2$, then it follows  from standard arguments and from our hypothesis that $N$ is irreducible that any $\phi$ can be represented by a Thurston norm minimizing surface. 
Indeed, any component that is a compressible torus or a boundary parallel annulus is always null homologous, and spheres and disks are null homologous as well when $N$ is irreducible and not $S^1\times D^2$.
Note that the empty surface is the unique Thurston norm minimizing surface representing the trivial homology class.
\item We denote by $\Sigma\times [-1,1]$ a closed tubular neighborhood of $\Sigma$. Furthermore  we write $N\doublesms \Sigma:=N\sms \Sigma \times (-1,1)$ and we write $\Sigma_\pm :=\Sigma \times \{\pm 1\}$. 
\item We denote by $\JJ(N)$ the set of JSJ-components of $N$.
\enm

The following theorem is proved in~\cite{Her18}.

\begin{theorem}\label{thm:reltorexist}
Let $N\ne S^1\times D^2$ be a connected irreducible $3$-manifold with empty or toroidal boundary
and let $\Sigma$ be a properly embedded surface in $N$. If $\Sigma$ is Thurston norm minimizing in $N$, then
the  $L^2$-Betti numbers of the pair $(N\doublesms \Sigma,\Sigma_-)$ vanish and  the relative $L^2$-torsion $\tautwo(N\doublesms \Sigma,\Sigma_-)\in \R_{>0}$ is defined. 
\end{theorem}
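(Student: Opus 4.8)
The plan is to reduce the statement to a finiteness/acyclicity assertion about the $L^2$-chain complex of the pair $(N\doublesms\Sigma,\Sigma_-)$, and then to verify that acyclicity using the fact that $\Sigma$ is Thurston norm minimizing. First I would observe that $N\doublesms\Sigma$ carries a natural sutured manifold structure: its boundary is partitioned into $R_+(\gamma)=\Sigma_+$, $R_-(\gamma)=\Sigma_-$, and the annuli/tori coming from $\partial N$ together with the vertical annuli $\partial\Sigma\times[-1,1]$. The Thurston norm minimizing hypothesis, combined with irreducibility of $N$, is exactly what guarantees (by Gabai's theory) that this sutured manifold is taut; in particular $\Sigma_-$ is incompressible in $N\doublesms\Sigma$ and no component of $\Sigma_-$ is a sphere or a disk. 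This is the input I would feed into the $L^2$-machinery.

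Next I would recall the definition of the relative $L^2$-torsion $\tautwo(N\doublesms\Sigma,\Sigma_-)$: one takes the relative cellular chain complex $C_*(\widetilde{N\doublesms\Sigma},\widetilde{\Sigma_-})$ of the universal cover, completes it over the group von Neumann algebra $\mathcal N(\pi_1(N\doublesms\Sigma))$, and computes the Fuglede–Kadison determinant of the resulting (weak) chain complex, provided all the $L^2$-Betti numbers $b_*^{(2)}(N\doublesms\Sigma,\Sigma_-)$ vanish. So the content of the theorem splits into two parts: (a) the vanishing of the $L^2$-Betti numbers, and (b) the finiteness (nonvanishing) of the torsion, i.e.\ that the complex is \emph{det-$L^2$-acyclic} and not merely $L^2$-acyclic. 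For (a), the key point is that $(N\doublesms\Sigma,\Sigma_-)$ is homotopy equivalent, in a $\pi_1$-controlled way, to a space built out of the pieces in a taut sutured hierarchy; using that each $R_\pm$ is incompressible and $\chi(R_-)=\chi(R_+)$ one shows the relative $L^2$-Euler characteristic vanishes, and then an induction over the sutured hierarchy (or, alternatively, a Mayer–Vietoris argument over the JSJ pieces $\JJ(N)$ combined with the known vanishing results for Seifert-fibered and hyperbolic pieces) upgrades this to vanishing of all the individual $L^2$-Betti numbers. The hyperbolic and graph-manifold base cases here are where one invokes Lück–Schick type results.

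For (b), the obstacle — and I expect this to be the genuinely hard part — is ruling out that the Fuglede–Kadison determinant of one of the differentials is zero, i.e.\ showing the combinatorial Laplacians have positive determinant. The standard route is to exhibit $(N\doublesms\Sigma,\Sigma_-)$ as $L^2$-acyclic \emph{over a subring with a Dieudonné-type determinant}, for instance by noting that the universal cover of $N\doublesms\Sigma$ is built from copies of the universal covers of the sutured pieces glued along incompressible surfaces, and that each such piece admits a cocompact action for which detergent (determinant class / $\mathrm{Det}\geqslant 1$) estimates are available — again leaning on residual finiteness of $3$-manifold groups (Agol, Przytycki–Wise, Wise) and the approximation theorem for Fuglede–Kadison determinants. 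Concretely I would argue that $N\doublesms\Sigma$ deformation retracts onto a $2$-complex $X$ with $\pi_1(X)=\pi_1(N\doublesms\Sigma)$ residually finite, that the relative complex $C_*(\widetilde X,\widetilde{\Sigma_-})$ is $L^2$-acyclic by part (a), and that residual finiteness forces the determinant class condition, so the torsion is a well-defined positive real number. The whole scheme is essentially the one carried out in \cite{Her18}, so in the body of the paper I would simply cite that reference; the sketch above records how one would reconstruct the argument — the crux being the passage from $L^2$-acyclicity to det-$L^2$-acyclicity via residual finiteness of the relevant sutured-piece fundamental groups.
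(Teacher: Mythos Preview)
The paper does not prove this theorem; it is stated and immediately attributed to \cite{Her18}. You correctly recognize this and end by saying you would simply cite that reference. In that sense your proposal matches the paper exactly.

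As for the sketch you offer of how \cite{Her18} proceeds: the sutured-hierarchy induction you describe is indeed the right engine. One starts from the taut sutured manifold $(N\doublesms\Sigma,\Sigma_+,\Sigma_-,\gamma)$, uses Gabai's theorem to obtain a sutured hierarchy terminating in product pieces, and proves vanishing of the relative $L^2$-Betti numbers by induction along the hierarchy; the base case is Lemma~\ref{lem:tautwo-basics}~(1). The determinant-class step is handled, as you say, by residual finiteness of $3$-manifold groups (so the determinant conjecture holds). Two small cautions about your sketch: the alternative you float --- a Mayer--Vietoris argument over the JSJ pieces $\JJ(N)$ with L\"uck--Schick input --- does not work as stated, because the JSJ decomposition is of $N$, not of $N\doublesms\Sigma$, and there is no reason the pair $(N\doublesms\Sigma,\Sigma_-)$ should split nicely along the JSJ tori; the hierarchy argument is really what is needed. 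Also, $N\doublesms\Sigma$ is a compact $3$-manifold with boundary and does not in general deformation retract to a $2$-complex in a way that preserves the pair structure with $\Sigma_-$; one works directly with the $3$-dimensional relative complex. These are wrinkles in the sketch rather than in your bottom line, which is correct: cite \cite{Her18}.
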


See Section \ref{sec:prel} for a precise definition of the term $\tautwo(N\doublesms \Sigma,\Sigma_-)$. The following is now our main theorem.

\begin{theorem}\label{main-theorem}
Let $N$ be a connected irreducible $3$-manifold with empty or toroidal boundary.
Furthermore let $\phi \in H^1(N;\Z)$ 
and let $\Sigma$ be a Thurston norm minimizing surface dual to $\phi$. Then the following two inequalities hold: 
\[ \smprod{{\ba{c}{\tiny{\mbox{$M\in \JJ(N)$}}}\\ \tiny{\mbox{with $\phi|_M=0$}}\ea}}{} e^{\vol(M)/6\pi}
\,\, \leqslant\,\,
C(N,\phi)
\,\, \leqslant\,\,
\tau^{(2)}(N\doublesms \Sigma, \Sigma_-).\]
\end{theorem}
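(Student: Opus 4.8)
The plan is to establish the two inequalities separately, using the behavior of the $L^2$-Alexander torsion under cutting along the surface $\Sigma$ and the multiplicativity/gluing properties recorded in Theorems~\ref{thm:previous} and~\ref{thm:previous2}.

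\medskip

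\emph{The upper bound.} First I would analyze the effect of cutting $N$ along $\Sigma$. Writing $N\doublesms\Sigma$ for the result of removing an open bicollar of $\Sigma$, one obtains a decomposition in which the torsion of $N$ is expressed in terms of the \emph{relative} torsion of $(N\doublesms\Sigma,\Sigma_-)$ together with a "gluing" contribution coming from the infinite cyclic direction. Concretely, the representative $\tau^{(2)}(N,\phi)(t)$ should decompose, up to $\doteq$, as $\tau^{(2)}(N\doublesms\Sigma,\Sigma_-)$ times a term that encodes the $\phi$-twisted gluing of the two copies $\Sigma_\pm$; since the dual surface $\Sigma$ realizes the Thurston norm, this infinite-cyclic contribution behaves like $t^{x_N(\phi)}$ for large $t$ (this is exactly the mechanism underlying Theorem~\ref{thm:previous2}(2) in the fibered case, where $N\doublesms\Sigma$ is $F\times[-1,1]$ and the gluing term is governed by the monodromy). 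Taking $t\to\infty$ and comparing with the definition of the leading coefficient $C(N,\phi)$ in Theorem~\ref{thm:previous2}(4), the gluing term contributes only to the power $t^k$ and not to the limiting constant, while the relative torsion $\tau^{(2)}(N\doublesms\Sigma,\Sigma_-)$ — which by Theorem~\ref{thm:reltorexist} is a well-defined positive real number — bounds the leading coefficient from above. The inequality rather than equality reflects that the decomposition only yields a one-sided estimate: cutting can only lose multiplicativity of Fuglede--Kadison determinants in one direction.

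\medskip

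\emph{The lower bound.} For this I would exploit the JSJ-decomposition together with the multiplicativity of the $L^2$-torsion along incompressible tori, Theorem~\ref{thm:previous}(3). On each JSJ-piece $M$ with $\phi|_M=0$, the restriction $\tau^{(2)}(M,\phi|_M)$ is just the ordinary $L^2$-torsion $\tau^{(2)}(M,0)$, whose value at every $t$ (in particular its leading constant, since it is a constant function in $t$) equals $\exp(\vol(M)/6\pi)$ by Theorem~\ref{thm:previous}(1). For the pieces $M$ with $\phi|_M\ne 0$, the contribution to the leading coefficient of that piece is at least $1$ by Theorem~\ref{thm:previous2}(4a). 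Assembling these across the JSJ-decomposition — which requires checking that the leading coefficient is sub/super-multiplicative under the JSJ-gluings in the appropriate direction — yields $C(N,\phi)\ge \prod_{M,\,\phi|_M=0} e^{\vol(M)/6\pi}$. The honest point here is that $C(N,\phi)$ is \emph{not} multiplicative across the JSJ-decomposition in general, so I would need a genuine inequality: the leading coefficient of a glued manifold dominates the product of the leading coefficients of the pieces, which should follow from the fact that gluing along tori can only increase (or preserve) the relevant Fuglede--Kadison determinant in the limit.

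\medskip

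\emph{Main obstacle.} The hard part will be making the cutting/gluing decomposition of the $L^2$-Alexander torsion along $\Sigma$ precise enough to extract the leading coefficient, i.e.\ separating cleanly the polynomial factor $t^{x_N(\phi)}$ (which the Thurston-norm-minimality of $\Sigma$ guarantees is the right exponent, via Theorem~\ref{thm:previous2}(3)) from the constant factor that converges to $C(N,\phi)$. This requires a Mayer--Vietoris / long exact sequence argument for $L^2$-torsions of the pair $(N\doublesml\Sigma,\Sigma_-)$ versus $N$, and controlling the determinant of the connecting map as $t\to\infty$; the inequality (rather than equality) in the upper bound is precisely the price paid because this determinant comparison is only one-directional in general. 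Establishing the super-multiplicativity of $C(-,-)$ under torus gluings for the lower bound is conceptually similar but technically easier, since there no twisting by $\phi$ survives on the gluing tori in the pieces that contribute volume.

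Note: the macro \doublesml in the last paragraph should read \doublesms; I correct it here: $(N\doublesms\Sigma,\Sigma_-)$.
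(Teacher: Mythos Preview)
Your lower-bound argument is essentially the paper's, but you have one misconception: the leading coefficient \emph{is} exactly multiplicative across the JSJ-decomposition. This follows directly from Theorem~\ref{thm:previous}(2) and~(3): since $\tau^{(2)}(N,\phi)(t)=\prod_{M\in\JJ(N)}\tau^{(2)}(M,\phi|_M)(t)$ as functions of $t$, the same holds for the asymptotic constants. So no super-multiplicativity argument is needed; once you know $C(M,\phi|_M)=e^{\vol(M)/6\pi}$ when $\phi|_M=0$ and $C(M,\phi|_M)\geqslant 1$ otherwise (Theorem~\ref{thm:previous2}(4a)), you are done.

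The upper bound, however, has a genuine gap. There is no clean factorization $\tau^{(2)}(N,\phi)(t)\doteq \tau^{(2)}(N\doublesms\Sigma,\Sigma_-)\cdot(\text{gluing term in }t)$ of the sort you describe: $\tau^{(2)}(N\doublesms\Sigma,\Sigma_-)$ is a single number, not a factor one can peel off from a Mayer--Vietoris sequence. The paper's mechanism is different and more hands-on. One builds an explicit CW-structure on $N$ adapted to $\Sigma$, writes the boundary matrices of $C_*^{(2)}(N,\phi,t)$ concretely, and applies Lemma~\ref{lem:calculatetorsion} to get a representative $f(t)$ as a ratio of regular Fuglede--Kadison determinants ${\det}^r_G$ of matrices in which only certain diagonal blocks depend on $t$ (via entries $t\gamma_i$). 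The crucial step is then Lemma~\ref{lem:uppersemicont} (upper semicontinuity of ${\det}^r_G$): letting $t\to 0^+$ kills those blocks, and the $\limsup$ of the numerator is bounded by ${\det}^r_G$ of the limiting matrix, which one recognizes as $\tau^{(2)}(N\doublesms\Sigma,\Sigma_-)$. A symmetric computation as $t\to\infty$ (yielding $\tau^{(2)}(N\doublesms\Sigma,\Sigma_+)$) pins down the exponent $k$ and forces $C(N,\phi)\leqslant\tau^{(2)}(N\doublesms\Sigma,\Sigma_-)$. Your proposal does not mention upper semicontinuity, which is the actual source of the one-sided inequality.

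Two further reductions you would need and have not addressed: first, the explicit matrix computation only works cleanly when $N\doublesms\Sigma$ is \emph{connected} (basepoint issues otherwise), and the paper invokes Turaev's algorithm (Proposition~\ref{prop:turaevsalgorithm}) to replace $\Sigma$ by a weighted surface with connected complement and the same relative torsion; second, the case of nonempty toroidal boundary is handled by doubling $N$ along $\partial N$ and reducing to the closed case.
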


It is natural to ask for which cases the inequalities of Theorem~\ref{main-theorem} are in fact equalities. In Section~\ref{sec:ahyperbolic} we define the class of ahyperbolic surfaces. The precise definition is irrelevant at the moment, but in Proposition~\ref{prop:torsionahyperbolic} we show that for an ahyperbolic surface all three terms in Theorem~\ref{main-theorem} are in fact equal to 1.

It turns out that all Thurston norm minimizing surfaces in a graph manifold are ahyperbolic. More interesting examples are given by Agol-Dunfield \cite{AD15}, who showed that all 2-bridge knots admit an ahyperbolic surface. Since there exists a family of non-fibered hyperbolic 2-bridge knots, namely the family of twist knots, we have the following corollary.\\

\noindent \textbf{Corollary \ref{cor:nonfiberedhyperbolic}.} \emph{There exist infinitely many \emph{non-fibered} hyperbolic knots $K$ in $S^3$ such that 
\[ C(E_K,\phi_K)\,\,=\,\,1.\]}

Note that we already knew that $ C(E_K,\phi_K)$ was equal to  $1$ in the case $K$ was fibered, thanks to Theorem \ref{thm:previous2} (2).

At this point it is natural to wonder if $1$ is the only possible value for the leading coefficient of a knot. We  answer in the negative:\\

\noindent \textbf{Corollary \ref{cor:>1}}  \emph{
The set of leading coefficients $C(E_K,\phi_K)$ (where the index $K$ runs over the set of all knots) is infinite.}

\emph{Furthermore, this set of leading coefficients contains a subset which is bijective to the set of hyperbolic volumes $\vol(E_K)$ of hyperbolic knots.}

\

We construct such examples of knots, with leading coefficient greater than $1$, 
in Section \ref{section:examples} as Whitehead doubles of hyperbolic knots, and we can compute the exact value of the leading coefficient for these examples. In particular the knots we provide to prove Corollary \ref{cor:>1} are non-hyperbolic.
We propose the following conjecture.

\begin{conjecture}\label{conjecture}\mbox{}
	\bnm[font=\normalfont]
	\item For every irreducible $3$-manifold $N$ with empty or toroidal boundary, any class $\phi \in H^1(N;\Z)$ 
	and any two Thurston norm minimizing surfaces $\Sigma, \Sigma'$ dual to $\phi$, we have
	\[ \hspace{1cm} \tautwo(N\doublesms \Sigma,\Sigma_-) \,\,=\,\, \tautwo(N\doublesms \Sigma',\Sigma'_-).\]
	\item For every irreducible 3-manifold $N$  with empty or toroidal boundary and any class $\phi\in H^1(N;\Z)$ the second inequality of Theorem~\ref{main-theorem} is an equality, i.e.\ we conjecture that
	for any Thurston norm minimizing surface $\Sigma$ dual to $\phi$ we have
	\[ \hspace{1cm} C(N,\phi) \,\,=\,\, \tautwo(N\doublesms \Sigma,\Sigma_-).\]
	\item There exists a hyperbolic 3-manifold $N$  with empty or toroidal boundary and a class $\phi \in H^1(N;\Z) \setminus \{ 0 \}$ such that $C(N,\phi)>1$.
	\enm 
\end{conjecture}

Note that part (2) of Conjecture \ref{conjecture} would immediately imply part (1). 
A different way of formulating Conjecture~\ref{conjecture}  (1) is to say that we conjecture that the first inequality of Theorem~\ref{main-theorem} is in general not an equality.
Furthermore Conjecture~\ref{conjecture} (2) says that  we expect that the term on the right-hand side is independent of the choice of $\Sigma$ and that in fact the second inequality of Theorem~\ref{main-theorem} is an equality. 

Conjecture~\ref{conjecture} (2) is motivated by the case of the classical Alexander polynomial. Indeed, for any knot $K$  whose Alexander polynomial $\Delta_K$ is of maximal degree $2g(K)$ and for any $\Sigma$ a minimal genus Seifert surface of $K$, the leading coefficient of $\Delta_K$ is equal to the order of $H_1(X_K\doublesms \Sigma,\Sigma_-)$.
Conjecture~\ref{conjecture} (3) would follow from (2), from an expected (but only conjectured at the moment) generalization of Theorem \ref{thm:previous} (1) to pairs 
$(N\doublesms \Sigma,\Sigma_-)$ and from the fact that there exists examples of such pairs whose volume is strictly greater than the volume in the first term of Theorem \ref{main-theorem}.

Note that Corollary \ref{cor:nonfiberedhyperbolic} implies, perhaps somewhat disappointingly, that in general the leading coefficient does not detect fiberedness of hyperbolic 3-manifolds. Nonetheless it is an interesting question whether the $L^2$-Alexander torsion 
detects fiberedness of hyperbolic 3-manifolds.

%========================
\subsection{The $L^2$-Alexander torsion and quantum invariants}
\ 
As a generalization of the volume conjecture \cite{MM01}, one can wonder if and how we can expect the $L^2$-Alexander torsions to be approximated by quantum invariants. As such, knowing values such as the leading coefficient can help test the plausibility of such conjectures.
As an example, let us consider the following conjecture, formulated by Xiao-Song Lin \cite[p.~9]{Lin05} in 2005.

\begin{conjecture} \textup{(Xiao-Song Lin 2005)}
Let $K\subset S^3$ be a knot. For every $N\in \N$ we denote by $J_K(N,x)\in \Z[q^{\pm 1}]$ the normalized $N$-th colored Jones polynomial \textup{(}We refer to \cite{MM01} and \cite{Mur11} for the definition.$)$ For every $t\in  \mathbb{C}^*$ the following equality holds:
\begin{equation} \us{N\to \infty}{\op{lim}} \Big| J_K\Big(N,\exp\Big(\tmfrac{2\pi it}{N}\Big)\Big)\Big|^{\frac{1}{3N}}\,\,\dot{=}\,\,\tau^{(2)}(K)(t)\cdot \max\{1,|t|\}.\label{qu:lin}\end{equation}
Xiao-Song Lin adds the comment that ``the final form of this conjecture is subject to modification''.
\end{conjecture}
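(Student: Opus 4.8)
\medskip
\noindent\textbf{A proof strategy for Lin's conjecture.}
Specialising \eqref{qu:lin} at $t=1$ and invoking Theorem~\ref{thm:previous}(1) shows that its right-hand side equals $\tautwo(K)(1)=\exp(\vol(E_K)/6\pi)$, so the conjecture contains the Kashaev--Murakami--Murakami volume conjecture as the case $t=1$; any complete proof therefore proves the volume conjecture as well, and what follows should be read as a research programme rather than a short argument. I would organise it into four stages: (i) verify \eqref{qu:lin} on families where both sides are known in closed form; (ii) reduce the general case to hyperbolic knots using the gluing and satellite behaviour of the two invariants; (iii) represent $J_K(N,\exp(2\pi it/N))$ by a state sum over an ideal triangulation of $E_K$ and determine its $N\to\infty$ exponential growth rate by the saddle-point method; and (iv) identify that growth rate with $\tautwo(K)(t)\cdot\max\{1,|t|\}$.

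For stage (i), torus knots are a good starting point: their exteriors are graph manifolds, so $\tautwo(K)(t)\doteq\max\{1,t^{2g(K)-1}\}$ by Theorem~\ref{thm:previous2}(1), while the colored Jones polynomial is given by Morton's explicit finite-sum formula, whose $N\to\infty$ behaviour at $q=\exp(2\pi it/N)$ can be extracted by a direct stationary-phase estimate and should reproduce the right-hand side up to the $\doteq$-ambiguity; the figure-eight knot, where $J_{4_1}(N,\exp(2\pi it/N))$ is the one-variable Habiro sum $\sum_k\prod_{j=1}^k|1-q^j|^2$, provides the first hyperbolic test against the explicit $\tautwo(4_1)$ of \cite{DW15,DFL16}, and the twist knots the next. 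For stage (ii) one combines, on the topological side, the gluing formula Theorem~\ref{thm:previous}(3) with the behaviour of $\tautwo$ under cabling and satellite operations (as used in Section~\ref{section:examples}), and on the quantum side the cabling formula for the colored Jones polynomial, to reduce \eqref{qu:lin} for $K$ to the same equality for the hyperbolic JSJ pieces of $E_K$, each carrying the appropriate boundary deformation, and for torus knots. The delicate point here is to carry the scalar factor $\max\{1,|t|\}$ and the $t^{k}$-indeterminacy consistently through all the pieces.

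For stages (iii) and (iv), take $K$ hyperbolic and write $J_K(N,\exp(2\pi it/N))$ as a finite multisum of quotients of quantum dilogarithm factors attached to an ideal triangulation of $E_K$. Replacing these factors by their $N\to\infty$ asymptotic expansions turns the multisum into an oscillatory sum governed by a potential function $V_t$ depending holomorphically on $t$; its critical-point equations are the gluing equations of the triangulation together with the constraint relating the meridian holonomy eigenvalue to $t$, whose solution set is a branch of the $A$-polynomial curve of $K$. Writing $\mathcal{V}(t)$ for the relevant real part of the critical value of $V_t$ at the dominant critical point, a stationary-phase analysis should give $|J_K(N,\exp(2\pi it/N))|^{1/(3N)}\to\exp(\mathcal{V}(t)/6\pi)$, up to the $\doteq$ and $\max\{1,|t|\}$ normalisations. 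It then remains to prove $\exp(\mathcal{V}(t)/6\pi)\doteq\tautwo(K)(t)\max\{1,|t|\}$. At $t=1$ this is the identity $\mathcal{V}(1)=\vol(E_K)$, i.e.\ L\"uck--Schick \cite{LS99} together with Theorem~\ref{thm:previous}(1). For general real $t$ both sides are real-analytic on $\R_{>0}$, agree at $t=1$, and by Theorem~\ref{thm:previous2}(3) and $x_{E_K}(\phi_K)=2g(K)-1$ have matching slopes as $t\to 0^{+}$ and as $t\to\infty$; I would force them to coincide by showing that $6\pi\frac{d}{dt}\ln\tautwo(K)(t)$, computed from the Fuglede--Kadison determinant description of the $L^2$-torsion, and $\frac{d}{dt}\mathcal{V}(t)$ obey the same Schl\"afli-type variation formula along the deformation. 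The conjecture for non-real $t\in\mathbb{C}^{*}$ would then be a matter of fixing a holomorphic extension of $\tautwo(K)$, which is one of the points Lin flags as ``subject to modification''.

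The decisive difficulty is stage (iii): rigorously justifying the passage from the finite quantum sum to the saddle-point value, that is, controlling the error in the asymptotic expansion of the quantum dilogarithm factors uniformly over the summation variables, and proving that the dominant contribution really comes from the geometric (or the correct deformed) critical point rather than from a Galois conjugate or from the boundary of the summation region. This is exactly the analytic heart of the volume conjecture, established so far only for a handful of knots, so the realistic outcome of this programme is an unconditional proof of \eqref{qu:lin} for the families treated in stages (i)--(ii) --- torus knots, twist knots, and knots obtained from these by satellite operations --- together with a proof of the general case conditional on the saddle-point estimate. A second, more structural obstacle lies in stage (iv): the precise dictionary between $\tautwo(K)$ viewed as a function of $t$ and the deformed Neumann--Zagier volume function is not recorded in the literature and would have to be developed, most plausibly via the Fuglede--Kadison determinant together with an approximation of $\tautwo$ by Reidemeister torsions of finite covers in the sense of L\"uck's approximation theorem.
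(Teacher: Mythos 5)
Your programme sets out to \emph{prove} Lin's conjecture, but the paper does not prove it --- immediately after restating the conjecture it argues that \eqref{qu:lin} \emph{cannot hold as stated} and records two independent obstructions, one of which is a consequence of the paper's own results. First, Morton--Traczyk \cite{MT88} showed that the colored Jones polynomials are mutation invariant, so the left-hand side of \eqref{qu:lin} is mutation invariant; but Theorem~\ref{thm:previous2}(3) together with $x_{E_K}(\phi_K)=2g(K)-1$ shows that $\tau^{(2)}(K)$ detects the Seifert genus, and genus is not a mutation invariant (the Conway and Kinoshita--Terasaka knots are mutants with genera $2$ and $3$). Hence the right-hand side distinguishes a mutant pair that the left-hand side cannot. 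Second, by \cite[Theorem~1]{GL05}, for every knot $K$ there is an $r>0$ such that the left-hand side of \eqref{qu:lin} converges to $1$ for all $t\in(0,r)$; if \eqref{qu:lin} held up to $\doteq$ this would force the $t\to 0^{+}$ ``trailing coefficient'' of $\tau^{(2)}(K)(t)$ to be $1$, yet by Theorem~\ref{thm:previous2}(4) together with the symmetry of Theorem~\ref{thm:previous}(6) that trailing coefficient equals $C(E_K,\phi_K)$, and Corollary~\ref{cor:>1} produces Whitehead doubles with $C(E_K,\phi_K)>1$.

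This is worth dwelling on because the failure lands exactly inside your stage~(ii): you propose to propagate the identity to satellite knots via cabling formulas, but Whitehead doubles are satellite knots, and Proposition~4.9/Corollary~\ref{cor:>1} of the paper shows they are counterexamples. The mutation obstruction likewise predates your stages~(iii)--(iv): no saddle-point analysis can extract from the colored Jones polynomials an invariant that separates the Conway knot from the Kinoshita--Terasaka knot, since the input data cannot tell them apart. What the paper actually does with Lin's conjecture is explain that the equality fails and then retreat to the open Question of whether $\tau^{(2)}(K)$ is \emph{determined by} quantum invariants in some (as yet unformulated) sense. A salvageable version of your proposal would ask for which classes of knots (necessarily avoiding these satellites, and necessarily restricted to mutation-closed phenomena) and for which range of $t$ the growth rate of $J_K(N,\exp(2\pi it/N))$ recovers $\tau^{(2)}(K)(t)\cdot\max\{1,|t|\}$; but a proof of \eqref{qu:lin} in the generality stated is not a gap to be filled --- the statement is false.
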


The motivation for the conjecture surely stems from the fact that for $t=1$ the above question is equivalent to the volume conjecture \cite{MM01}.
There are at least two reasons why (\ref{qu:lin}) cannot hold as stated:
\bnm
\item[(a)] Morton and Traczyk \cite{MT88} showed that colored Jones polynomials are invariant under mutation. On the other hand $\tau^{(2)}(K)$ is not invariant under mutation. This can be seen as follows: 
by Theorem~\ref{thm:previous2} the invariant $\tau^{(2)}(K)$ detects the genus, but the genus is not a mutation invariant. In fact the 
Conway knot and the Kinoshita-Terasaka knot are mutants, but their genera are respectively $2$ and $3$.
\item[(b)] It follows from \cite[Theorem 1]{GL05} that for any knot $K$ there exists an $r>0$ such that for any $t\in (0,r)$ the left hand side of (\ref{qu:lin}) converges to $1$.
Were the conjecture true, the right hand side would be $1$ too, and the leading coefficient as well; however Corollary~\ref{cor:>1}  implies that this cannot hold in general. 
\enm

We take the freedom to rephrase Lin's question as follows:

\begin{question}
Is the $L^2$-Alexander invariant $\tau^{(2)}(K)$ determined by quantum invariants?
\end{question}

A rather speculative idea is that perhaps the results of Futer, Kalfagianni and Purcell \cite{FKP13} can be used to build a bridge between quantum invariants and the $L^2$-Alexander invariant. 

\subsection*{Organization}
This paper is organized as follows. In Section \ref{sec:prel} we recall the definitions of the $L^2$-torsion and of the $L^2$-Alexander torsion. In Section \ref{sec:turaev} we apply Turaev's algorithm on embedded surfaces to compare relative $L^2$-torsions.
In Section \ref{sec:proof} we prove the main Theorem \ref{main-theorem}. In Section \ref{sec:ahyperbolic} we introduce ahyperbolic surfaces and study their corresponding relative $L^2$-torsions. Finally in Section \ref{section:examples} we compute the leading coefficient for the Borromean rings and prove Corollary \ref{cor:>1}.

\subsection*{Conventions.} 
As mentioned in the beginning, throughout the paper, unless we say explicitly otherwise, we assume that all manifolds are  compact and oriented.
Furthermore 
all groups are understood to be countable.

\subsection*{Acknowledgements.}
The first author was supported by the Swiss National Science Foundation, subsidy 200021$\_$162431, at the Universit\'e de Gen\`eve.
The second and the third author gratefully acknowledge the
support provided by the SFB 1085 `Higher Invariants' at the University of Regensburg,
funded by the Deutsche Forschungsgemeinschaft DFG. 
We thank the referees for their many helpful comments and suggestions.

%=========================================================
\section{Preliminaries}\label{sec:prel}
In this section, for the most part we follow \cite{Lu02} and \cite{DFL16}. We refer to these references for more details.

%=========================================================

\subsection{The von Neumann dimension}
\label{sub:dimension}

Given a group~$G$, the completion of the algebra~$ \mathbb{C}[G]$ endowed with the scalar product
$ \left \langle \sum_{g \in G} \lambda_g g , \sum_{g \in G} \mu_g g \right \rangle:= \sum_{g \in G} \lambda_g \overline{\mu_g}$
is the Hilbert space 
$$ \ell^2(G)\,\,:=\,\, \Big \{ \,\tmsum{g \in G}{} \lambda_g g \ \Big| \ \lambda_g \in \mathbb{C} , \tmsum{g \in G}{} | \lambda_g |^2 < \infty  \Big\}$$ of square-summable complex functions on~$G$. We denote by~$B(\ell^2(G))$ the algebra of operators on~$\ell^2(G)$ that are bounded with respect to the operator norm. 

Given~$h \in G$, we define the corresponding \textit{left-} and \textit{right-multiplication operators} $L_{h}$ and $R_{h}$ in $B(\ell^2(G))$ as extensions of the classical automorphisms of $G$ $(g \mapsto hg)$ and $(g \mapsto gh)$.
One can extend the operators $R_{h}$ $\mathbb{C}$-linearly to an operator $R_w\colon \ell^2(G) \to \ell^2(G)$ for any $w \in  \mathbb{C}[G]$. Moreover, if~$\ell^2(G)^n$ is endowed with its usual Hilbert space structure and~$A = \left ( a_{i,j} \right ) \in M_{p,q}( \mathbb{C}[G])$ is a~$ \mathbb{C}[G]$-valued~$p\times q$ matrix, then the right multiplication by
$$R_A:= \left (R_{a_{i,j}}\right )_{1 \leqslant i \leqslant p, 1 \leqslant j \leqslant q}$$ provides a bounded operator~$\ell^2(G)^{p} \rightarrow \ell^2(G)^{q}$. Note that here we consider the vectors of $\ell^2(G)^{p}$ as row vectors and the ``matrix operator'' $R_A$ acts  on the right; notably one gets $R_{A B} = R_B \circ R_A$. In most cases, when there is no danger of confusion, given $A\in M_{p,q}(\mathbb{C}[G])$ we denote by $A$ also the corresponding operator, i.e.\ we just write $A$ instead of $R_A$.

The \textit{von Neumann algebra}~$\mathcal{N}(G)$ of the group~$G$ is the sub-algebra of $B(\ell^2(G))$ made up of~$G$-equivariant operators (i.e. operators that commute with all left multiplications $L_h$).  A \textit{finitely generated Hilbert~$\mathcal{N}(G)$-module} consists of a Hilbert space~$V$ together with a left~$G$-action by isometries such that there exists a positive integer~$m$ and a $G$-equivariant embedding~$\varphi$ of~$V$ into~$\bigoplus_{i=1}^m \ell^2(G)$. A \textit{morphism of finitely generated Hilbert~$\mathcal{N}(G)$-modules} $f \colon U \rightarrow V$ is a linear bounded map which is $G$-equivariant.

Denoting by $e$ the neutral element of~$G$, the von Neumann algebra of~$G$ is endowed with the \textit{trace}~$\mathrm{tr}_{\mathcal{N}(G)} \colon \mathcal{N}(G) \rightarrow \mathbb{C}, \phi \mapsto \left \langle \phi (e) , e \right \rangle$ which extends to~$\mathrm{tr}_{\mathcal{N}(G)} \colon M_{n,n}(\mathcal{N}(G)) \rightarrow  \mathbb{C}$ by summing up the traces of the diagonal elements.

\begin{definition}
The \textit{von Neumann dimension} of a finitely generated Hilbert~$\mathcal{N}(G)$-module $V$ is defined as
~$$\dim_{\mathcal{N}(G)}(V) := \mathrm{tr}_{\mathcal{N}(G)}\left (\mathrm{pr}_{\varphi(V)}\right ) \in \R_{\geqslant 0},$$
where~$ \mathrm{pr}_{\varphi(V)} \colon \bigoplus_{i=1}^m \ell^2(G) \to \bigoplus_{i=1}^m \ell^2(G)~$ is the orthogonal projection onto~$\varphi(V)$.
\end{definition}

By \cite[Chapter~1.1.3]{Lu02} the von Neumann dimension does not depend on the embedding of~$V$ into the finite direct sum of copies of~$\ell^2(G)$.

\subsection{The Fuglede-Kadison determinant}
\label{sub:Fuglede}

The \textit{spectral density} $F(f)\colon \mathbb{R}_{\geqslant 0}\to \mathbb{R}_{\geqslant 0}$ of a morphism~$f \colon U \to V$ of finitely generated Hilbert~$\mathcal{N}(G)$-modules is defined as the map that sends~$\lambda \in \mathbb{R}_{\geqslant 0}$ to 
$$ F(f)(\lambda):= \sup \{
 \dim_{\mathcal{N}(G)} (L) | L \in \mathcal{L}(f,\lambda) \},$$
where~$\mathcal{L}(f,\lambda)$ is the set of finitely generated Hilbert~$\mathcal{N}(G)$-submodules of~$U$ on which the restriction of~$f$ has a norm smaller or equal to~$\lambda$. Since~$F(f)(\lambda)$ is monotonous and right-continuous it defines a measure~$dF(f)$ on the Borel set of~$\R_{\geqslant 0}$ that is uniquely determined by the equation~$dF(f)((a,b]) = F(f)(b)-F(f)(a)$ for all~$a<b$.

\begin{definition} \label{def detFK}
The \textit{Fuglede-Kadison determinant of~$f$} is defined by
\begin{equation*}\label{detFK}
{\det}_{G}(f)=
\begin{cases}
 \exp \left ( \int_{0^+}^\infty \ln(\lambda) \, dF(f)(\lambda) \right ) & \mbox{if } \int_{0^+}^\infty \ln(\lambda) \, dF(f)(\lambda) > -\infty, \\ 
0 & \mbox{otherwise.}
 \end{cases} 
\end{equation*}
Moreover, when~$\int_{0^+}^\infty \ln(\lambda) \, dF(f)(\lambda) > -\infty$, one says that \emph{$f$ is of determinant class}. 

If $A\in M_{n,n}( \mathbb{C}[G])$ then we define the $\emph{regular Fuglede-Kadison determinant}$ of $A$ by

\begin{equation*}\label{detrFK}
{\det}^r_{G}(A)=
\begin{cases}
{\det}_G (R_A) & \mbox{if $R_A$ is injective and of determinant class,} \\ 
0 & \mbox{otherwise.}
\end{cases} 
\end{equation*}
\end{definition}

The following proposition lists some basic properties of the regular Fuglede-Kadison determinant that will be used in later computations. The proposition follows easily from the results in  \cite[Section 3.2]{Lu02}.

\begin{proposition}\label{prop:op_det} 
Let $G$ be a group, and $n,p \in \Z_{>0}$. Then:
\bnm[font=\normalfont]
\item For all $\lambda \in \mathbb{C}$, $g\in G$, one has 
${\det}^r_{G}(\lambda g) = |\lambda|$.
\item For all $A,B \in M_{n,n}( \mathbb{C}[G])$, one has ${\det}^r_{G}(A B) = {\det}^r_{G}(A) {\det}^r_{G}(B)$.
\item For all $A \in M_{n,n}( \mathbb{C}[G]), C \in M_{n,p}( \mathbb{C}[G]), D \in M_{p,p}( \mathbb{C}[G])$, one has 
$${\det}^r_{G} \begin{pmatrix}
A & C \\ 0 & D
\end{pmatrix} = {\det}^r_{G}(A) {\det}^r_{G}(D).$$
\item Taking the transpose or permuting rows or columns leaves ${\det}^r_{G}$ unchanged.
\item For any group inclusion $i\colon H \hookrightarrow G$, and any $E \in M_{n,n}( \mathbb{C}[H])$, one has
$$ {\det}^r_{G} (i(E)) = {\det}^r_{H} (E)
.$$
\item Let $g\in G$ be an element of infinite order. Then for any $t\in \mathbb{C}$ we have
\[ {\det}^r_G(1 - t \cdot g) \,\,=\,\, \max\{1,|t|\}.\]
\enm
\end{proposition}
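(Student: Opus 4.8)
\textbf{Proof proposal for Proposition~\ref{prop:op_det}(6).}

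The plan is to reduce the statement to a classical computation with the von Neumann algebra of $\Z$, using the functoriality property (5) that has just been established. First I would observe that the element $g \in G$ of infinite order generates an infinite cyclic subgroup $H = \langle g\rangle \cong \Z$, and the inclusion $i\colon H \hookrightarrow G$ allows us to write $1 - t\cdot g = i(1 - t\cdot g)$ where on the right-hand side $1 - t\cdot g$ is viewed as an element of $M_{1,1}(\C[H]) = \C[H]$. By part (5) we then have ${\det}^r_G(1 - t\cdot g) = {\det}^r_H(1 - t\cdot g)$, so it suffices to prove the formula in the group $H = \Z$.

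Next I would carry out the computation in $\NN(\Z)$. Via the Fourier transform, $\ell^2(\Z)$ is isometrically identified with $L^2(S^1)$, under which the right-multiplication operator $R_g$ becomes multiplication by the function $z \mapsto z$ on the unit circle, and hence $R_{1 - t\cdot g}$ becomes multiplication by $m(z) = 1 - t z$. Under this identification the trace $\mathrm{tr}_{\NN(\Z)}$ corresponds to integration against the normalized Haar measure on $S^1$, and the Fuglede--Kadison determinant of a multiplication operator $M_h$ with $h\in L^\infty(S^1)$ is given by $\exp\bigl(\int_{S^1} \ln|h(z)|\, dz\bigr)$ provided this integral is finite (and the operator is injective, which holds here since $1 - tz$ has at most one zero on $S^1$, a null set). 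Thus I need to evaluate
\[
{\det}^r_\Z(1 - t\cdot g) \,=\, \exp\!\left( \int_{S^1} \ln|1 - t z|\, dz \right).
\]
This is precisely the Mahler measure of the polynomial $1 - tX$ (up to the usual normalization), and by Jensen's formula it equals $\max\{1, |t|\}$: when $|t| \leqslant 1$ the function $\ln|1 - tz|$ is harmonic on a neighborhood of the closed disk, so its average over $S^1$ is its value at the origin, namely $\ln 1 = 0$; when $|t| > 1$ one writes $1 - tz = -t z(1 - t^{-1}z^{-1})$ and applies the same mean-value argument to the now-harmonic piece, picking up the contribution $\ln|t|$.

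I do not expect a serious obstacle here; the only points requiring a little care are (i) checking injectivity of $R_{1-t\cdot g}$ so that the regular determinant coincides with the ordinary one (resolved by noting the symbol vanishes on at most a measure-zero subset of $S^1$), and (ii) the boundary case $|t| = 1$, where $\ln|1 - tz|$ has an integrable logarithmic singularity and one must confirm the integral still converges to $0$ — this is again Jensen's formula, or can be seen directly since $\int_0^{2\pi} \ln|1 - e^{i\theta}|\, d\theta = 0$. Alternatively, both of these subtleties are already packaged in the standard reference \cite[Section~3.2]{Lu02} (the computation of $\det_{\NN(\Z)}$ of the operator $R_{1-tg}$ appears essentially there), so it would suffice to cite that computation after performing the reduction via part (5).
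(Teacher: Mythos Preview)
Your proposal is correct. The paper does not give a proof of this proposition at all; it simply states that ``the proposition follows easily from the results in \cite[Section 3.2]{Lu02}.'' Your argument is precisely the standard computation one finds there: reduce to the cyclic subgroup via (5), identify $\NN(\Z)$ with $L^\infty(S^1)$ via Fourier transform, and evaluate the resulting integral using Jensen's formula. So you have in effect unpacked the citation.
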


The regular Fuglede-Kadison determinant sometimes behaves better than the usual Fuglede-Kadison determinant. For example we will make use of the following fact recently proven by Liu \cite[Lemma 3.1]{Liu17}.

\begin{lemma}\label{lem:uppersemicont}
	Let $A_k\in M_{n,n}( \mathbb{C}[G]), k\in \N$, be a sequence converging to some $A\in M_{n, n}( \mathbb{C}[G])$
	in the norm topology, then
	\[ \limsup_{k\to\infty} {\det}^r_{G}(A_k) \,\leqslant \,{\det}^r_{G}(A). \]
\end{lemma}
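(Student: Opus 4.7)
The plan is to first establish upper semi-continuity for the (non-regular) Fuglede-Kadison determinant on positive operators, and then transfer the result to $\det^r_G$. Since $\det_G(R_A) = \sqrt{\det_G(R_{A^*A})}$ and $A_k \to A$ in operator norm forces $A_k^*A_k \to A^*A$ in norm, the task reduces to proving that for a norm-convergent sequence $B_k \to B$ of positive self-adjoint operators on $\ell^2(G)^n$, one has $\limsup_k \det_G(B_k) \leqslant \det_G(B)$. The key input is weak convergence of the spectral measures $\mu_{B_k} \to \mu_B$ (where $\mu_B$ is the Borel measure on $[0,M]$ defined by $\int f\,d\mu_B = \mathrm{tr}_{\mathcal{N}(G)}(f(B))$): this follows from norm convergence via the continuous functional calculus, because $f(B_k) \to f(B)$ in norm for every continuous $f$ on a common interval.

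To exploit this, I would fix some $\epsilon \in (0,1)$ that is not an atom of $\mu_B$ (only countably many exist) and split
\[ \int_{(0,M]} \ln(\lambda)\, d\mu_{B_k}(\lambda) \,=\, \int_{(0,\epsilon)} \ln(\lambda)\, d\mu_{B_k}(\lambda) \,+\, \int_{[\epsilon,M]} \ln(\lambda)\, d\mu_{B_k}(\lambda). \]
The first summand is $\leqslant 0$ because $\ln < 0$ on $(0,1)$, while the second converges to $\int_{[\epsilon,M]} \ln(\lambda)\, d\mu_B(\lambda)$ by weak convergence (the integrand is bounded and continuous, and $\mu_B$ can be arranged to have no atom at the two endpoints). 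Taking $\limsup_k$ and then letting $\epsilon \to 0^+$ through non-atoms of $\mu_B$ (monotone convergence on the right) yields $\limsup_k \ln \det_G(B_k) \leqslant \ln \det_G(B)$, with the convention $\ln 0 = -\infty$ should $B$ fail to be of determinant class.

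For the transfer from $\det_G$ to $\det^r_G$, we always have $\det^r_G(A_k) \leqslant \det_G(R_{A_k})$ by definition, so the positive case already gives $\limsup_k \det^r_G(A_k) \leqslant \det_G(R_A)$; this concludes the proof unless $R_A$ is of determinant class with nontrivial kernel $d := \dim_{\mathcal{N}(G)}(\ker R_{A^*A}) > 0$, in which case $\det^r_G(A) = 0$ while $\det_G(R_A) > 0$. This is the main obstacle, and the plan is to exploit the kernel: since only indices $k$ with $R_{A_k}$ injective matter (the rest have $\det^r_G(A_k) = 0$ automatically), we may assume $\mu_{A_k^*A_k}(\{0\}) = 0$ for the indices we consider. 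By the Portmanteau theorem applied to the open set $[0,\epsilon)$, weak convergence together with $\mu_{A^*A}(\{0\}) \geqslant d$ forces $\mu_{A_k^*A_k}((0,\epsilon)) = \mu_{A_k^*A_k}([0,\epsilon)) \geqslant d/2$ for large $k$. Bounding
\[ \int_{(0,M]} \ln(\lambda)\, d\mu_{A_k^*A_k}(\lambda) \,\leqslant\, \tfrac{d}{2}\ln\epsilon \,+\, n\ln M \]
(with $M$ chosen so that $\ln M \geqslant 0$) and letting $\epsilon \to 0^+$ drives the right-hand side to $-\infty$. Hence $\det_G(R_{A_k^*A_k}) \to 0$, so $\det^r_G(A_k) \leqslant \sqrt{\det_G(R_{A_k^*A_k})} \to 0 = \det^r_G(A)$, completing the argument.
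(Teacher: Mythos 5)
The paper does not prove this lemma itself; it is cited from Liu~\cite[Lemma~3.1]{Liu17}, so there is no in-paper proof to compare against. Your argument is correct and follows the standard route to upper semicontinuity of Fuglede--Kadison determinants: reduce to the positive case via $\det_G(R_A) = \det_G(R_A^*R_A)^{1/2}$; deduce weak convergence of the spectral measures $\mu_{A_k^*A_k} \to \mu_{A^*A}$ from operator-norm convergence together with the continuity of the functional calculus and of $\operatorname{tr}_{\mathcal{N}(G)}$; and truncate $\ln$ at a non-atom $\epsilon$, using $\ln<0$ on $(0,1)$ to drop the singular part before passing to the limit. The genuinely delicate point is the transfer from $\det_G$ to $\det^r_G$, since the two only disagree when $R_A$ has nontrivial kernel yet is of determinant class, and you handle it correctly: restricting to the indices with $R_{A_k}$ injective (the others contribute $0$), the Portmanteau inequality $\liminf_k \mu_{A_k^*A_k}([0,\epsilon)) \geqslant \mu_{A^*A}(\{0\}) = d > 0$ pushes a fixed amount of spectral mass of $A_k^*A_k$ into $(0,\epsilon)$, forcing $\ln\det_G(R_{A_k^*A_k}) \leqslant (d/2)\ln\epsilon + n\ln M \to -\infty$ and hence $\det^r_G(A_k)\to 0 = \det^r_G(A)$. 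The only hygiene points are implicit but unproblematic: you need $\epsilon<1$ so that $\ln\epsilon<0$ can be multiplied by the mass lower bound $d/2$, and $M\geqslant 1$ is available since $\sup_k\|R_{A_k}\|<\infty$. I see no gap.
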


\subsection{$L^2$-torsions}\label{section:def-l2-torsion}

A \textit{finite Hilbert $\NN(G)$-chain complex} $C_*$ is a sequence of morphisms of finitely generated Hilbert $\NN(G)$-modules
$$C_* = 0 \to C_n \overset{\partial_n}{\longrightarrow} C_{n-1} 
\overset{\partial_{n-1}}{\longrightarrow} \ldots
\overset{\partial_2}{\longrightarrow} C_1 \overset{\partial_1}{\longrightarrow} C_0 \to 0$$
such that $\partial_p \circ \partial_{p+1} =0$ for all $p$.
The \textit{$p$-th~$L^2$-homology} of such a chain complex~$C_*$ is the finitely generated Hilbert~$\NN(G)$-module
$$H_p^{(2)}(C_*) := \textrm{Ker}(\partial_p) / \overline{\textrm{Im}(\partial_{p+1})}$$
obtained by quotienting by the \textit{closure} of the image of $\partial_{p+1}$.
The \textit{$p$-th $L^2$-Betti number of~$C_*$} is defined as~$b_p^{(2)}(C_*) := \dim_{\NN(G)}(H_p^{(2)}(C_*))$. A finite Hilbert $\NN(G)$-chain complex~$C_*$ is \textit{weakly acyclic} if its~$L^2$-homology is trivial (i.e. if all its~$L^2$-Betti numbers vanish) and of \textit{determinant class} if all the operators~$\partial_p$ are of determinant class. 

\begin{definition}
If~$C_*$ is weakly acyclic and of determinant class, define its \textit{$L^2$-torsion} by
$$\tautwo(C_*) := \prod_{i=1}^n \det {}_{G}(\partial_i)^{(-1)^i} \in \R_{>0},$$
and set~$\tautwo(C_*)=0$ otherwise.
\end{definition}

Let~$X$ be a compact connected CW-complex endowed with a base point~$z$ and let~$Y$ be a CW-subcomplex of~$X$. We write $G=\pi_1(X,z)$, we denote by~$p\colon \tilde{X} \rightarrow X$ the universal cover of~$X$ and we write~$\tilde{Y}=p^{-1}(Y)$. The natural left action of~$G = \pi_1(X,z)$ on~$\tilde{X}$ gives rise to a left~$\mathbb{Z}[G]$-module structure on the cellular chain complex~$C_*\left (\tilde{X},\tilde{Y}\right )$. 
By picking a lift of each cell of $X\sms Y$ to $\tilde{X}\sms \tilde{Y}$ we can view 
$C_*\left (\tilde{X},\tilde{Y}\right )$ as a \emph{based} free $\Z[G]$-chain complex.

Now suppose we are given a homomorphism~$\phi \colon G \rightarrow \mathbb{R}$ and some $t>0$. We denote by
$\kappa(G, \phi,t) \colon \mathbb{Z}[G] \rightarrow \mathbb{R}[G]$
the ring homomorphism~$g \mapsto t^{\phi(g)} g$.
There is a right action of~$G$ on~$\ell^2(G)$ given by~$a \cdot g= R_{\kappa(G, \phi,t)(g)} (a)$ where~$a \in \ell^2(G)$ and~$g \in G$; this turns~$\ell^2(G)$ into a right~$\mathbb{Z}[G]$-module.
The \textit{$\mathcal{N}(G)$-cellular chain complex} of the pair~$(X,Y)$ associated to~$(\phi,t)$ is the finite Hilbert~$\NN(G)$-chain complex
$$ C_*^{(2)}(X,Y, \phi,t):=\ell^2(G) \otimes_{\mathbb{Z}[G]}C_*\left (\tilde{X},\tilde{Y}\right ),$$
and the \textit{$L^2$-homology} of~$(X,Y)$ associated to~$(\phi,t)$, denoted ~$H_*^{(2)}(X,Y,\phi,t)$, is obtained by taking the $L^2$-homology of $C_*^{(2)}(X,Y, \phi,t)$.

We define the \textit{$L^2$-Alexander torsion} of $(X,Y,\phi)$ at $t>0$ to be
\[ 
 \tautwo(X,Y,\phi)(t)\,\, := \,\, \left\{ \ba{ll} \tautwo\left (C_*^{(2)}(X,Y,\phi,t)\right ),&
 \mbox{ if $C_*^{(2)}(X,Y,\phi,t)$ is weakly acyclic and}\\
 &\quad \mbox{of determinant class},\\
 0,&\mbox{otherwise.}\ea\right.\]
As is explained in \cite[Lemma~4.1]{DFL16} the function $t\mapsto \tautwo(X,Y,\phi)(t)$, up to multiplication by a function of the form $t\mapsto t^k$ for some fixed $k\in \R$, does not depend on the choice of the lift of the cells of $X\sms Y$.

When $Y= \emptyset$, we write 
$C_*^{(2)}(X, \phi,t)$ instead of 
$C_*^{(2)}(X,Y, \phi,t)$
and
$\tautwo(X,\phi)$ instead of
$\tautwo(X,Y,\phi)$.

When $\phi$ is the zero map, $t$ becomes irrelevant and we write 
$C_*^{(2)}(X,Y)$ instead of
$C_*^{(2)}(X,Y, \phi,t)$ and
$\tautwo(X,Y)$ instead of $\tautwo(X,Y,\phi)$.
We call $\tautwo(X,Y)$ the\textit{ relative $L^2$-torsion} of $(X,Y)$ and 
$\tautwo(X) = \tautwo(X,\emptyset)$ the \textit{$L^2$-torsion} of $X$. 

Given a  connected manifold $M$ and a  submanifold $N$ we can use triangulations to view the pair $(M,N)$ as a pair of CW-complexes. (Recall that all manifolds are assumed to be compact.) As is discussed in \cite[p.~160]{Lu02}, the corresponding $L^2$-torsions do not depend on the choice of triangulation. Alternatively, if $M$ is a 3-manifold one can also use that the Whitehead group of $\pi_1(M)$ is trivial, see e.g.\ \cite[(C.36)]{AFW15}.

We define the $L^2$-torsion of a disconnected 3-manifold pair as the product of the $L^2$-torsions of the components.

The following lemma is proved for the case $N=\emptyset$ in \cite[Theorem~3.93]{Lu02}. The proofs carry over without any changes to the relative case.

\begin{lemma}\label{lem:tautwo-basics}
Let $(M,N)$ be a pair of  manifolds such that $M$ is connected and such that $\pi_1(M)$ is residually finite.
$($Note that the fundamental group of any compact 3-manifold is residually finite, see \cite{Hem87,AFW15}.$)$
\bnm[font=\normalfont]
\item If $M=N\times [0,1]$, then for any $s\in [0,1]$ we have $\tautwo(M,N\times \{s\})=1$.
\item 
Suppose that the $L^2$-Betti numbers of $(M,N)$ vanish. 
Let $p\colon \wti{M}\to M$ be a finite covering. We write $\wti{N}:=p^{-1}(N)$. Then 
the $L^2$-Betti numbers of $(M,N)$ also vanish and we have
\[ \tautwo\left (\wti{M},\wti{N}\right )\,\,=\,\, \tautwo(M,N)^{\left [\wti{M}:M\right ]}.\]
\item If $M$ is an $S^1$-bundle over a manifold $X$, e.g.\ if $M=S^1\times X$ for some manifold $X$, then the $L^2$-Betti numbers of $M$ vanish and $\tautwo(M)=1$. 
\item If the $L^2$-Betti numbers of $M$ and $N$ vanish, e.g.\ if $M$ and $N$ are $S^1$-bundles and if $\pi_1(N)\to \pi_1(M)$ is a monomorphism, then
\[ \tautwo(M)\,\,=\,\,\tautwo(M,N)\cdot \tautwo(N).\]
\item 
Suppose that the $L^2$-Betti numbers of $(M,N)$ vanish. Furthermore suppose that $(M,N)=(X\cup Y,C\cup D)$ where $X$ and $Y$ are submanifolds such that each component of $X\cap Y$ is a submanifold of $\partial X$ and $\partial Y$ and the same holds for $(M,X,Y)$ replaced by $(N,C,D)$.
If for each component $Z$ of $X\cap Y$ the $L^2$-Betti numbers of $(Z,Z\cap N)$ vanish and if the induced maps $\pi_1(Z)\to \pi_1(X)$ and $\pi_1(Z)\to \pi_1(Y)$ are monomorphisms, then
\[ \hspace{1cm} \tautwo(M,N)=\tautwo(X,C)\cdot\tautwo(Y,D)\cdot \tautwo(X\cap Y,C\cap D)^{-1}.\]
\enm
\end{lemma}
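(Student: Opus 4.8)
The plan is to prove the five items of Lemma~\ref{lem:tautwo-basics} by reducing each to a statement about finite Hilbert $\NN(G)$-chain complexes and then invoking the corresponding absolute results from \cite[Theorem~3.93]{Lu02} together with the standard behaviour of $L^2$-torsion under short exact sequences of chain complexes \cite[Theorem~3.35]{Lu02}. Throughout we use that for a pair $(M,N)$ of manifolds one may fix a CW-structure on $M$ in which $N$ is a subcomplex, and that $C_*(\wti M,\wti N)$ is then a based free $\Z[\pi_1 M]$-chain complex whose $L^2$-completion computes $H_*^{(2)}(M,N)$ and whose $L^2$-torsion, when defined, is $\tautwo(M,N)$; by the invariance statements recalled just before the lemma this is independent of all choices.

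For item (1), if $M=N\times[0,1]$ then $M$ deformation retracts onto $N\times\{s\}$ through a cellular homotopy, so the inclusion $N\times\{s\}\hookrightarrow M$ is a simple homotopy equivalence (the Whitehead group being trivial for $3$-manifolds, or by the general discussion in \cite{Lu02}); hence the relative chain complex $C_*(\wti M,\wti{N\times\{s\}})$ is chain contractible over $\Z[\pi_1 M]$ with trivial torsion, and $\tautwo=1$. Item (3) is the absolute statement \cite[Theorem~3.93(5)]{Lu02}: an $S^1$-bundle $M$ over $X$ has a cell structure obtained by crossing a cell structure on $X$ with the standard one on $S^1$ (up to simple homotopy), the $\Z[\pi_1 M]$-chain complex is the cone of multiplication by $(1-h)$ for an infinite-order element $h\in\pi_1 M$, so its $L^2$-Betti numbers vanish and by Proposition~\ref{prop:op_det}(6) its torsion is $\det^r_{\pi_1 M}(1-h)=1$. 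For item (2), a finite covering $p\colon\wti M\to M$ of degree $d$ induces on $C_*(\wti{\wti M},\wti{\wti N})$ the structure of $C_*(\wti M,\wti N)$ regarded over the subgroup $\pi_1\wti M\le\pi_1 M$ of index $d$; the restriction functor multiplies all $L^2$-Betti numbers and scales $\ln\det_G$ by the index, exactly as in \cite[Theorem~3.93(6)]{Lu02}, giving $\tautwo(\wti M,\wti N)=\tautwo(M,N)^d$.

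Items (4) and (5) are the heart of the matter and both follow the same template: a short exact sequence of based free $\Z[\pi_1 M]$-chain complexes
\[
0\to C_*(\wti A,\wti C)\to C_*(\wti M,\wti N)\to C_*(\wti M,\wti N\cup A\cup C\text{-type term})\to 0
\]
combined with excision. Concretely, for (5) one writes $M=X\cup_{X\cap Y}Y$ and $N=C\cup_{C\cap D}D$ and uses the Mayer--Vietoris short exact sequence of pairs; because the inclusions $\pi_1(Z)\hookrightarrow\pi_1(X),\pi_1(Y)$ are injective for each component $Z$ of $X\cap Y$, the induced maps of $L^2$-chain complexes are, after applying $\ell^2(\pi_1 M)\otimes_{\Z[\pi_1 M]}(-)$, still short exact with the correct von Neumann dimensions, so the weak exactness hypotheses of \cite[Theorem~3.35]{Lu02} are met; the additivity of Euler characteristics forces the $L^2$-Betti numbers of the three complexes to satisfy the Mayer--Vietoris relation, and since those of $(M,N)$, each $(Z,Z\cap N)$, and hence (by a short diagram chase) of $(X,C)$ and $(Y,D)$ all vanish, the long weakly exact $L^2$-homology sequence collapses; \cite[Theorem~3.35]{Lu02} then gives multiplicativity of $L^2$-torsion, which is precisely the displayed formula. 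Item (4) is the special case $X=N$, $Y=$ (a product neighbourhood of $N$ in $M$, or more simply the degenerate decomposition), or can be proved directly from the short exact sequence $0\to C_*(\wti N)\to C_*(\wti M)\to C_*(\wti M,\wti N)\to 0$.

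The main obstacle is bookkeeping in item (5): one must check that the pointwise (cellwise) short exact sequences of $\Z[\pi_1 M]$-modules remain weakly exact after tensoring with $\ell^2(\pi_1 M)$, which uses the injectivity of the $\pi_1$-maps to guarantee that $\ell^2(\pi_1 M)$ is flat enough on the relevant submodules (equivalently that the induced maps of Hilbert modules are injective with closed image), and that the vanishing of the $L^2$-Betti numbers of the corner pieces $(Z,Z\cap N)$ propagates, via the weakly exact Mayer--Vietoris sequence and the hypothesis on $(M,N)$, to vanishing for $(X,C)$ and $(Y,D)$ so that every torsion in sight is actually defined and nonzero. Once this is in place the torsion identity is a formal consequence of the sum formula for $L^2$-torsion along a weakly exact sequence, exactly as in the absolute case treated in \cite[Section~3.4]{Lu02}, and as the authors note the arguments there carry over verbatim to pairs.
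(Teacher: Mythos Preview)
Your proposal is essentially correct and aligns with the paper's own treatment: the paper does not give a detailed proof but simply states that the absolute case is \cite[Theorem~3.93]{Lu02} and that ``the proofs carry over without any changes to the relative case.'' Your sketch is a reasonable unpacking of what that sentence means, invoking exactly the right ingredients (\cite[Theorems~3.35 and~3.93]{Lu02}).

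One point in your final paragraph is misdiagnosed, however. In item~(5) the short exact Mayer--Vietoris sequence of $\Z[\pi_1 M]$-chain complexes is \emph{split} exact in each degree, because at the chain level one is merely partitioning cells; hence tensoring with $\ell^2(\pi_1 M)$ preserves exactness automatically and no ``flatness'' argument is needed. The $\pi_1$-injectivity hypotheses enter elsewhere: they guarantee, via the induction property of the Fuglede--Kadison determinant (Proposition~\ref{prop:op_det}(5)), that the torsions $\tautwo(X,C)$, $\tautwo(Y,D)$, $\tautwo(X\cap Y,C\cap D)$ computed intrinsically over $\pi_1(X)$, $\pi_1(Y)$, $\pi_1(Z)$ coincide with the torsions of the corresponding subcomplexes over $\pi_1(M)$ that appear in the Mayer--Vietoris sequence. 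Similarly, your claim that vanishing of $L^2$-Betti numbers ``propagates'' to $(X,C)$ and $(Y,D)$ is correct, but the mechanism is simply that in the short exact sequence two of the three complexes are weakly acyclic, hence so is the third (the direct sum), and therefore each summand; no diagram chase is required.
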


We make the following trivial observation which follows immediately from the definitions.

\begin{lemma}\label{lem:zero-phi}
Let $N$ be a  manifold.
\bnm[font=\normalfont]
\item For any $\phi\in H^1(N;\R)$ we have $\tautwo(N,\phi)(t=1)=\tautwo(N)$.
\item If $\phi\in H^1(N;\R)$ is the zero class, then $\tautwo(N,\phi)$ is a constant map, in particular $\tautwo(N,\phi)(t)=\tautwo(N)$ for all $t\in \R_{>0}$.
\item For any $\phi\in H^1(N;\R)$ and any $r\in \R$ we have $\tautwo(N,r \phi)(t)=\tautwo(N,\phi)(t^r)$.
\enm
\end{lemma}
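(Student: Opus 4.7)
The plan is to unravel the definition of the twisting homomorphism $\kappa(G,\phi,t)\colon \Z[G]\to \R[G]$, $g\mapsto t^{\phi(g)}g$, case by case, and observe that in each of the three situations the $\NN(G)$-chain complex $C_*^{(2)}(N,\phi,t)$ simplifies to something already understood. Throughout, I write $G=\pi_1(N)$ and view $\ell^2(G)$ as a right $\Z[G]$-module via $\kappa(G,\phi,t)$. Since the whole $L^2$-Alexander torsion is computed from this chain complex and its boundary operators, equality of the chain complexes (or more precisely equality of the twisted right $\Z[G]$-module structures on $\ell^2(G)$) forces equality of the torsions.

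For part (1), I observe that for every $g\in G$ we have $\kappa(G,\phi,1)(g)=1^{\phi(g)}g=g$, so $\kappa(G,\phi,1)$ is the inclusion $\Z[G]\hookrightarrow \R[G]$. Consequently the right $\Z[G]$-module structure used to form $C_*^{(2)}(N,\phi,1)$ is the ordinary one, and hence
\[ C_*^{(2)}(N,\phi,1)\,\,=\,\,\ell^2(G)\otimes_{\Z[G]}C_*(\wti N)\,\,=\,\,C_*^{(2)}(N).\]
Taking $L^2$-torsions on both sides yields $\tautwo(N,\phi)(1)=\tautwo(N)$.

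For part (2), suppose $\phi=0$. Then for every $t>0$ and every $g\in G$, $\kappa(G,0,t)(g)=t^0 g=g$, so $\kappa(G,0,t)$ is independent of $t$ and equals the inclusion $\Z[G]\hookrightarrow \R[G]$. Thus $C_*^{(2)}(N,0,t)=C_*^{(2)}(N)$ for every $t>0$, which gives $\tautwo(N,0)(t)=\tautwo(N)$ as constant functions of $t$.

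For part (3), given $\phi\in H^1(N;\Q)$ and $r\in\Q$, compute
\[ \kappa(G,r\phi,t)(g)\,=\,t^{r\phi(g)}g\,=\,(t^r)^{\phi(g)}g\,=\,\kappa(G,\phi,t^r)(g)\]
for every $g\in G$ and every $t>0$; the rational exponent is harmless since $t>0$. Hence the ring homomorphisms $\kappa(G,r\phi,t)$ and $\kappa(G,\phi,t^r)$ coincide, so the chain complexes $C_*^{(2)}(N,r\phi,t)$ and $C_*^{(2)}(N,\phi,t^r)$ are identical (with identical boundary operators), whence $\tautwo(N,r\phi)(t)=\tautwo(N,\phi)(t^r)$. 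There is no genuine obstacle in the argument; the only thing to be careful about is that $t^{r\phi(g)}$ is well-defined as a positive real number (which is why the hypothesis $t>0$ in the definition of $\kappa$ is essential), and that $r\phi$ is evaluated on the integral generator of $\phi(G)$ consistently, which is built into the formula.
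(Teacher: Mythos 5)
Your proof is correct, and it is the same argument the paper has in mind: the paper says this lemma ``follows immediately from the definitions,'' and your write-up simply unpacks the behavior of the twisting homomorphism $\kappa(G,\phi,t)$ in each of the three cases, which is exactly the intended reasoning.
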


We end this section with recalling the following lemma which allows one to calculate the torsion for a chain complex of a 3-manifold. In this way it was first stated in \cite[Lemma 3.2]{DFL16} but the ideas go back to \cite[Theorem 2.2]{Tu01}.

\begin{lemma}\label{lem:calculatetorsion}
 Let $G$ be a group, $j,k,l$ integers such that $j< k$ and $A,B,C$ matrices with entries in $ \mathbb{C}[G]$ of the respective sizes $(k+l-j)\times l$, $k \times (k+l-j)$ and $j \times k$. We consider the complex
 \[
 	C_*\colon 0 \longrightarrow \ell^2(G)^j \overset{R_C}{\longrightarrow} \ell^2(G)^k \overset{R_B}{\longrightarrow} \ell^2(G)^{k+l-j} \overset{R_A}{\longrightarrow} \ell^2(G)^l \longrightarrow 0. 
 \]

Let $L\subset\left\{1,\ldots,k+l-j \right\}$ be a subset of size $l$ and $J\subset\left\{1,\ldots k \right\}$ a subset of size $j$. We write 
 \begin{align*}
 	A(L):= &\mbox{ rows in A corresponding to } L, \\
 	B(J,L):= &\mbox{ result of deleting the columns of $B$ corresponding to $L$}\\
 	&\mbox{ and deleting the rows corresponding to J},\\
 	C(J):= &\mbox{ columns of C corresponding to $J$.}
 \end{align*}
 If ${\det}^r_{G}(A(L))\neq 0$ and ${\det}^r_{G}(C(J))\neq 0$, then 
 \[\tautwo(C_*)\,\,=\,\, \frac{{\det}^r_{G}(B(J,L))}{{\det}^r_{G}(C(J))\cdot {\det}^r_{G}(A(L))}. \]
\end{lemma}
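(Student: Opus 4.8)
The plan is to reduce the computation of $\tautwo(C_*)$ to the alternating product of Fuglede-Kadison determinants appearing in the definition, and then to use the block/multiplicativity properties from Proposition~\ref{prop:op_det} together with a change of basis adapted to the subsets $L$ and $J$. First I would observe that since ${\det}^r_{G}(A(L))\neq 0$ and ${\det}^r_{G}(C(J))\neq 0$, both $R_{A(L)}$ and $R_{C(J)}$ are injective operators of determinant class; this is what forces $C_*$ to be weakly acyclic and of determinant class, so that $\tautwo(C_*)$ is defined and equals $\det_G(R_C)\cdot \det_G(R_B)^{-1}\cdot\det_G(R_A)$, with the conventions on regular determinants matching up. (One must be slightly careful that $R_C, R_B, R_A$ themselves need not be injective; the standard fix, following \cite[Theorem~2.2]{Tu01} and \cite[Lemma~3.2]{DFL16}, is to pass to the associated acyclic complex or to use the sum formula for FK-determinants over a weakly exact sequence, which is already implicit in \cite[Section~3.2]{Lu02}.)

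Next I would perform the combinatorial bookkeeping. Reorder the basis of $\ell^2(G)^{k+l-j}$ so that the $l$ coordinates indexed by $L$ come last, and reorder the basis of $\ell^2(G)^k$ so that the $j$ coordinates indexed by $J$ come last; by Proposition~\ref{prop:op_det}(4) such permutations do not change any regular determinant. With respect to these orderings, $A$ becomes a block row matrix whose right $l\times l$ block is $A(L)$, and $C$ becomes a block column matrix whose bottom $j\times j$ block is $C(J)$. The idea, exactly as in Turaev's algorithm, is then to use $R_{A(L)}$ and $R_{C(J)}$ to "split off" direct summands: the injectivity and determinant-class hypotheses let one write $\ell^2(G)^{k+l-j}$ as an (internal, up to the appropriate closure) sum on which the torsion of the truncated two-term pieces contributes exactly ${\det}^r_G(A(L))$ and ${\det}^r_G(C(J))$, while what remains is precisely the middle map cut down to $B(J,L)$. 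The resulting bookkeeping is a purely algebraic manipulation of block matrices combined with parts (2), (3) and (4) of Proposition~\ref{prop:op_det}, and yields
\[
\tautwo(C_*)\,\,=\,\,\frac{{\det}^r_{G}(B(J))}{{\det}^r_{G}(C(J,L))\cdot {\det}^r_{G}(A(L))},
\]
where I read $B(J)$ and $C(J,L)$ in the notation of the statement (the middle determinant in the numerator being the big minor of $B$ obtained after removing the rows corresponding to $J$ and columns corresponding to $L$, once the complementary pieces have been cancelled).

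The main obstacle I anticipate is not the formal manipulation but the handling of determinant classes and the lack of injectivity of the full boundary maps: one has to justify that the "splitting" of $\ell^2(G)^{k+l-j}$ and $\ell^2(G)^k$ into the relevant summands respects the Fuglede-Kadison determinant multiplicatively, i.e.\ that there are no hidden contributions from closures of images or from the kernels of $R_A, R_B, R_C$. This is exactly the subtle point already addressed in \cite[Theorem~2.2]{Tu01} and \cite[Lemma~3.2]{DFL16}, so in practice I would invoke those references for the delicate analytic step and present the present lemma as the specialization of that algorithm to a four-term complex, with the subsets $L$ and $J$ chosen so that the two outer "corner" minors are invertible. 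A secondary (routine) point is to double-check indices and sizes: that $A(L)$ is $l\times l$, $C(J)$ is $j\times j$, and $B(J,L)$ is $(k-j)\times(k-j)$, so that all three regular determinants make sense and the exponents $(-1)^i$ in the definition of $\tautwo$ land on the right factors.
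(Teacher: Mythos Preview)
The paper does not actually prove this lemma: immediately before the statement it says the result ``was first stated in \cite[Lemma~3.2]{DFL16} but the ideas go back to \cite[Theorem~2.2]{Tu01}'', and no argument is given. So there is no in-paper proof to compare against; your proposal is in fact a reasonable outline of the argument those two references carry out, and in particular your identification of the key analytic subtlety (multiplicativity of the Fuglede--Kadison determinant along the splitting, handled via \cite[Section~3.2]{Lu02}) is exactly the point that \cite{DFL16} isolates.

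Two small comments on the proposal itself. First, your intermediate expression $\det_G(R_C)\cdot\det_G(R_B)^{-1}\cdot\det_G(R_A)$ has the exponents inverted: with the grading in the statement one has $\partial_3=R_C$, $\partial_2=R_B$, $\partial_1=R_A$, so $\tautwo(C_*)=\det_G(R_A)^{-1}\det_G(R_B)\,\det_G(R_C)^{-1}$, which is what produces $B$ in the numerator and $A,C$ in the denominator. Second, you have correctly noticed (and silently corrected) the apparent typo in the displayed formula of the statement, where $B(J)$ and $C(J,L)$ should read $B(J,L)$ and $C(J)$ respectively; it would be worth flagging this explicitly rather than absorbing it into your reading of the notation.
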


%===========================
\section{Turaev's algorithm}\label{sec:turaev}
Let $N$ be a connected irreducible 3-manifold and let $\Sigma$ be a Thurston norm minimizing surface.
Recall that $\Sigma\times [-1,1]$ denotes a closed tubular neighborhood of $\Sigma$, $N\doublesms \Sigma$ denotes $N\sms \Sigma \times (-1,1)$ and $\Sigma_\pm$  denotes $\Sigma \times \{\pm 1\}$. 
If $N\doublesms \Sigma$ is disconnected, then this can lead to delicate base point issues.
Turaev's algorithm (described in the following proof of Proposition \ref{prop:turaevsalgorithm}) consists in modifying $\Sigma$ into a surface $S$, without changing the value of $\tautwo(N\doublesms \Sigma,\Sigma_-)$ so that all components of $N\doublesms S$ but possibly one are products. In the next chapter, where we prove the main result of this paper, this result will be crucial in avoiding the aforementioned base point issues.

Let us first state a useful fact:

\begin{proposition}\label{prop:everything-pi1-injective}
Let $N$ be an irreducible 3-manifold  with empty or toroidal boundary.
\bnm[font=\normalfont]
\item If $\Sigma$ is Thurston norm minimizing $($as a reminder, our notion of a Thurston norm minimizing surface excludes in particular compressible tori$)$, then every component of $\Sigma$ is $\pi_1$-injective and every component of $N\doublesms \Sigma$ is $\pi_1$-injective in $N$.
\item Every JSJ-torus and every JSJ-component is $\pi_1$-injective in $N$.
\enm
\end{proposition}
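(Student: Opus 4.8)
The plan is to prove both statements by reducing to standard facts in 3-manifold topology, namely that incompressible surfaces are $\pi_1$-injective and that the pieces of a JSJ-decomposition along incompressible tori are $\pi_1$-injective.

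\medskip

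\textbf{Part (1).} First I would argue that every component of $\Sigma$ is $\pi_1$-injective. Since $\Sigma$ is Thurston norm minimizing, no component is a sphere or a disk, and since $N$ is irreducible, the Loop Theorem / Dehn's Lemma applies: if some component $\Sigma_i$ were compressible, then either $\Sigma_i$ is a boundary-compressible surface that we could compress to strictly decrease $\chi_-$, contradicting minimality, or $\Sigma_i$ bounds a ball (impossible, not a sphere) or $\Sigma_i$ is a compressible torus (excluded by hypothesis) or $\Sigma_i$ is a boundary-parallel annulus (also excluded). Here I should be careful to treat both genuine compressions and $\partial$-compressions; a $\partial$-compression of a non-annulus, non-disk surface also strictly decreases $\chi_-$, so minimality rules it out. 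Hence each $\Sigma_i$ is incompressible and $\partial$-incompressible, and by the Loop Theorem it is $\pi_1$-injective in $N$. For the components of $N \doublesms \Sigma$: given a component $X$ of $N \doublesms \Sigma$, a compression of $\partial X$ along a disk either lies along the portion of $\partial X$ coming from $\Sigma_\pm$ — but that would give a compressing disk for $\Sigma$, which we just excluded — or along the portion coming from $\partial N$, which is a union of incompressible tori, also impossible. So $\partial X \hookrightarrow X$ has no compressions, and one applies the half-lives-half-dies / Loop Theorem argument once more to conclude $\pi_1(X) \to \pi_1(N)$ is injective. The main subtlety here is organizing the case analysis for $\partial$-compressions cleanly, and making sure the excluded surface types (sphere, disk, compressible torus, boundary-parallel annulus) exactly match what the compression arguments need to rule out; I expect this bookkeeping to be the only real work.

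\medskip

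\textbf{Part (2).} This is the classical statement about JSJ-decompositions: the JSJ-tori of $N$ are incompressible by construction (the JSJ-decomposition of an irreducible 3-manifold with toroidal boundary is along a canonical, up to isotopy, collection of disjoint incompressible tori), hence $\pi_1$-injective by the Loop Theorem. For the JSJ-components, I would apply Part (1): each JSJ-component is a component of $N \doublesms T$ where $T$ is the (incompressible, Thurston-norm-minimizing-when-nonempty, or at least incompressible) union of JSJ-tori, so the same argument as in Part (1) — or directly a standard reference such as \cite{AFW15} — gives $\pi_1$-injectivity of each piece. Alternatively I can simply cite the standard references (e.g.\ Hatcher's notes, or \cite[(1.6), (1.9)]{AFW15}) for the whole of Part (2), since this is textbook material. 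I would lean toward giving the short self-contained argument for the tori and then invoking Part (1) for the components, keeping the dependence on outside literature minimal.

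\medskip

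The hard part, to the extent there is one, will be Part (1): specifically, verifying that the exclusion of exactly the four surface types in the definition of "Thurston norm minimizing" is enough to run the compression arguments to completion, both for $\Sigma$ and for $N \doublesms \Sigma$. Everything else is an application of the Loop Theorem and irreducibility of $N$.
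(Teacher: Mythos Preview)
Your argument for $\pi_1$-injectivity of the surface components is fine and matches the paper's (loop theorem plus minimality). But your argument for the complementary \emph{3-manifold} pieces has a genuine gap. You write: ``So $\partial X \hookrightarrow X$ has no compressions, and one applies the half-lives-half-dies / Loop Theorem argument once more to conclude $\pi_1(X) \to \pi_1(N)$ is injective.'' The Loop Theorem concerns maps of \emph{surfaces} into 3-manifolds; it says nothing about the kernel of $\pi_1(X)\to\pi_1(N)$ when $X$ is itself a 3-dimensional piece. Knowing that $\partial X$ is incompressible (in $X$ or in $N$) does not by itself give injectivity of $\pi_1(X)\to\pi_1(N)$, and ``half-lives-half-dies'' is a rank statement about $H_1(\partial M)\to H_1(M)$, irrelevant here.

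The paper handles this step, and the analogous one for JSJ-components, by invoking Bass--Serre theory (specifically \cite[Section~5.2, Corollary~1]{Se03}): once each component of $\Sigma$ is $\pi_1$-injective, $\pi_1(N)$ is an iterated amalgamated product / HNN extension of the $\pi_1$'s of the pieces over the surface groups, and the normal-form theorem for such groups gives injectivity of each vertex group. This is the missing ingredient in your sketch. Note also that your Part~(2) plan of ``apply Part~(1) to the JSJ-tori'' doesn't literally work, since JSJ-tori need not be Thurston norm minimizing (they may be null-homologous); but the same Bass--Serre argument applies directly once the tori are known to be incompressible.
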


\begin{proof}
The fact that every component of a Thurston norm minimizing surface $\Sigma$ is $\pi_1$-injective is a well-known consequence of the loop theorem, see e.g.\ \cite[(C.22)]{AFW15} for details. The JSJ-tori are $\pi_1$-injective by definition. Both of the remaining statements are now an immediate consequence of \cite[Section 5.2, Corollary 1]{Se03}.
\end{proof}

A \textit{weighted surface} $\widehat{S}$ in a  closed 3-manifold $N$ is a collection of pairs $(S_i,w_i)$, $i=1,\ldots,n$, where $S_i$ are disjoint connected surfaces in $N$ and $w_i$ are positive integers. We denote the union $\bigcup S_i$ by $S'$.

Every weighted surface $\widehat{S}$ defines a homology class $\left [\widehat{S}\right ]:=\sum_{i=1}^{n} w_i\cdot [S_i] \in H_2(N)$. By taking $w_i$ parallel copies of $S_i$ we get a properly embedded surface $S$ such that $[S]=\left [\widehat{S}\right ]$.

Conversely, every  surface $S$ in a  closed 3-manifold $N$ can be seen as a weighted surface by giving every component the weight $1$. 

The first observation regarding $L^2$-torsion, is the following lemma.
\begin{lemma}\label{lem:killingproductpieces}
Let $\what{S}$ be a weighted surface in a closed connected irreducible $3$-manifold $N$. We define $S$ and $S'$  as above. Then $S'$ is a Thurston norm minimizing surface if and only if $S$ is a Thurston norm minimizing surface. In this case we have
\[ \tautwo(N\doublesms S,S_-)\,\,=\,\, \tautwo(N\doublesms S', S'_-) \]
\end{lemma}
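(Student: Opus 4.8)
The plan is to reduce the comparison of the two relative $L^2$-torsions to a purely local statement about the components of $N\doublesms S$ that are tubular neighborhoods of the form $S_i\times[0,1]$, using the gluing formula of Lemma~\ref{lem:tautwo-basics}. First I would record the easy equivalence: by construction $[S']=[S]$ up to the weights, and taking $w_i$ parallel copies of a $\pi_1$-injective component changes neither $\chi_-$ nor Thurston-norm-minimality of the surface in a way that matters, since $\chi_-(S)=\sum_i w_i\,\chi_-(S_i)$ and the Thurston norm is linear; so $S'$ is Thurston norm minimizing if and only if $S$ is, and in that case Theorem~\ref{thm:reltorexist} guarantees that both relative $L^2$-torsions $\tautwo(N\doublesms S,S_-)$ and $\tautwo(N\doublesms S',S'_-)$ are defined and positive.

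Next I would set up the decomposition. Passing from $S'$ to $S$ replaces each component $S_i$ by $w_i$ parallel copies; equivalently, $N\doublesms S$ is obtained from $N\doublesms S'$ by inserting, between consecutive parallel copies, $(w_i-1)$ product slabs $S_i\times[0,1]$. Concretely one writes $N\doublesms S = (N\doublesms S')\cup (\text{product pieces})$, glued along copies of the $S_i$, and similarly $S_-$ decomposes compatibly. The key input is Lemma~\ref{lem:tautwo-basics}: item~(1) says each product piece $S_i\times[0,1]$ has $\tautwo(S_i\times[0,1],\,S_i\times\{s\})=1$, and the gluing pieces $S_i\times\{\mathrm{pt}\}$ are themselves $S^1$-free... more precisely one applies item~(5) (or the simpler item~(4)) with $X=N\doublesms S'$, $Y=$ union of product slabs, $X\cap Y=$ union of copies of the $S_i$. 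The hypotheses of item~(5) are met because every $S_i$ is $\pi_1$-injective in $N$ hence in each adjacent piece (Proposition~\ref{prop:everything-pi1-injective}), and because the $L^2$-Betti numbers of $(S_i, S_i\cap S_-)$ vanish — here $S_i\cap S_-$ is either all of $S_i$ or empty, and a surface rel its whole boundary, or a surface with a free subgroup of $\pi_1$, has vanishing $L^2$-Betti numbers by the usual arguments. Then the gluing formula collapses: $\tautwo(N\doublesms S,S_-) = \tautwo(N\doublesms S',S'_-)\cdot\prod(\text{product terms})/\prod(\text{intersection terms}) = \tautwo(N\doublesms S',S'_-)$, since every product term and every intersection term equals $1$.

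The main obstacle I anticipate is bookkeeping with the relative subcomplex and the precise form in which the product slabs attach: one must be careful that when $S_i$ is a closed surface the "$S_-$" boundary behaves correctly, and that the iterated application of the gluing formula (once per inserted slab, or all at once) satisfies the $\pi_1$-injectivity and vanishing-$L^2$-Betti-number hypotheses at every stage — in particular that $\tautwo(S_i\times[0,1], S_i\times\{0\})=1$ is exactly item~(1) with $s=0$, and that the "seam" copies of $S_i$ contribute a factor $\tautwo(S_i\times\{\mathrm{pt}\})$ which is $1$ because $S_i$ fibers over... no, rather because $\tautwo$ of a $2$-complex with vanishing $L^2$-Betti numbers whose torsion one computes directly — cleanest is to invoke that $\tautwo(S_i)=1$ for any compact surface $S_i$, which follows from Lemma~\ref{lem:tautwo-basics}(3) when $\chi(S_i)<0$ via a finite cover argument, or directly. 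Modulo this routine care, the identity follows formally from the additivity of $L^2$-torsion under gluing along $\pi_1$-injective pieces with trivial torsion.
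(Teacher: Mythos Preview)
Your core idea---that passing from $S'$ to $S$ only introduces product slabs $S_i\times I$, and these contribute trivially to the relative $L^2$-torsion via Lemma~\ref{lem:tautwo-basics}~(1)---is correct and is exactly what the paper uses. But you have misread the geometry and thereby made the argument harder than it is.

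The pair $(N\doublesms S,\,S_-)$ is the \emph{disjoint union} of $(N\doublesms S',\,S'_-)$ and $\sum_i(w_i-1)$ product pairs $(S_i\times I,\,S_i\times\{-1\})$: once you cut $N$ along all $w_i$ parallel copies of each $S_i$, the regions between consecutive copies become separate connected components of $N\doublesms S$, and what remains is canonically homeomorphic to $N\doublesms S'$ with the correct $S'_-$ marking. There is no gluing and hence no ``seam'' surfaces $S_i$ to account for. The desired equality then follows in one line from the convention (recorded just before Lemma~\ref{lem:tautwo-basics}) that $\tau^{(2)}$ of a disconnected pair is the product over components, together with Lemma~\ref{lem:tautwo-basics}~(1). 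This is precisely the paper's proof. Your detour through the gluing formula~(5), the $\pi_1$-injectivity checks, and the side computation of $\tau^{(2)}(S_i)$ are all superfluous---and your sentence ``$N\doublesms S = (N\doublesms S')\cup(\text{product pieces})$, glued along copies of the $S_i$'' is simply false as stated.

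For the first assertion your argument is also too loose: the Thurston norm is only a seminorm, and $[S]=\sum_i w_i[S_i]$ and $[S']=\sum_i[S_i]$ are different classes, so ``linearity'' does not by itself bridge them. The paper invokes \cite[Corollary~2]{Th86} and \cite[Theorem~5.5]{Ga83}; the substantive input is that the components of a norm-minimizing surface lie over a common face of the Thurston norm ball, so any positive integer combination of them is again norm-minimizing, and conversely.
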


\begin{proof}
The first assertion follows from \cite[Corollary~2]{Th86} and \cite[Theorem~5.5]{Ga83}. 
The second one follows from Lemma~\ref{lem:tautwo-basics} (1), since $S$ is obtained by taking parallel copies of some of the components of $S'$.
\end{proof}

The rest of this section is devoted to the proof of the following proposition, where we adapted the proof of \cite[Lemma 1.2]{Tu02} to our setting.

\begin{proposition}\label{prop:turaevsalgorithm} 
	If $\Sigma$ is a Thurston norm minimizing surface in a closed connected irreducible 3-manifold $N$, then there is a weighted surface $\widehat{S}$ in $N$ such that $($with $S$ and $S'$ defined as above$)$:
	\begin{enumerate}[font=\normalfont]
		\item $\left [\widehat{S}\right ]=[\Sigma] \in H_2(N)$,
		\item $S$ is Thurston norm minimizing,
		\item $N\doublesms S'$ is connected, 
		\item $\tautwo(N\doublesms S, S_-)=\tautwo(N\doublesms S', S'_-)=\tautwo(N\doublesms\Sigma, \Sigma_-)$.
	\end{enumerate}
\end{proposition}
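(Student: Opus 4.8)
The strategy is to realize the weighted surface $\widehat S$ by an explicit combinatorial procedure (Turaev's algorithm) that progressively simplifies the components of the complement, keeping the homology class fixed and the Thurston norm minimizing property intact, while controlling the relative $L^2$-torsion at each step. First I would fix a handle/CW-structure on $N$ adapted to $\Sigma$, so that $N\doublesms\Sigma$ decomposes into pieces along $\Sigma_\pm$, and record that by Proposition~\ref{prop:everything-pi1-injective} every component of $\Sigma$ and every component of $N\doublesms\Sigma$ is $\pi_1$-injective. If $N\doublesms\Sigma$ is already connected we take $\widehat S=\Sigma$ and there is nothing to do, so assume it is disconnected.

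\textbf{The merging step.} The heart of the argument is: if $N\doublesms\Sigma$ is disconnected, pick two distinct components $V_1,V_2$ that are glued to opposite sides of some component $\Sigma_0$ of $\Sigma$ (using that $N$ is connected and that $\Sigma$ separates locally). Following \cite[Lemma~1.2]{Tu02}, one tubes $\Sigma$ along an arc: replace $\Sigma_0$ by the surface obtained by adding a tube (a $1$-handle) that runs through $V_1$ and $V_2$, or equivalently pass to a weighted surface where one component is replaced by a connected sum/band sum that reduces the number of components of the complement by one. Concretely, one chooses an embedded arc $\gamma$ in $N\doublesms\Sigma$ joining $\Sigma_-$-side of $\Sigma_0$ in $V_1$ to the $\Sigma_+$-side in $V_2$ (or one compresses/re-cuts appropriately), and performs the corresponding surgery on $\Sigma$ to get $S^{(1)}$. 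The homology class is unchanged because we only add a tube / modify within a regular neighborhood; the Thurston norm minimizing property is preserved by Lemma~\ref{lem:killingproductpieces} together with \cite[Corollary~2]{Th86} and Gabai's \cite[Theorem~5.5]{Ga83} (the new surface has the same complexity up to the usual Euler-characteristic bookkeeping, and still has no sphere/disk/compressible-torus/boundary-parallel-annulus components after discarding trivial pieces). Iterating, after finitely many such steps we reach a surface $S'$ with $N\doublesms S'$ connected, which is item (3), and item (1) holds throughout. To get item (2) as stated for $S$ (the properly embedded surface obtained by taking $w_i$ parallel copies), invoke Lemma~\ref{lem:killingproductpieces} once more: $S'$ Thurston norm minimizing $\iff$ $S$ is.

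\textbf{Torsion invariance.} For item (4), the key is that each merging step does not change $\tautwo(N\doublesms S,S_-)$. By Lemma~\ref{lem:killingproductpieces} the passage between $S$ and $S'$ costs nothing (product pieces/parallel copies contribute trivially by Lemma~\ref{lem:tautwo-basics}(1)). For a single tube-surgery step I would apply the Mayer--Vietoris / gluing formula Lemma~\ref{lem:tautwo-basics}(5): decompose $N\doublesms S^{(i+1)}$ as the old complement $N\doublesms S^{(i)}$ glued to the $1$-handle neighborhood $D^2\times[0,1]$ along annuli/disks sitting in the old boundary, all of which are $\pi_1$-injective by Proposition~\ref{prop:everything-pi1-injective} and whose relative pairs have vanishing $L^2$-Betti numbers (they are products, so Lemma~\ref{lem:tautwo-basics}(1),(3) applies). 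The overlap pieces contribute torsion $1$, and the added handle piece is a product with trivial relative torsion, so the gluing formula yields $\tautwo(N\doublesms S^{(i+1)},S^{(i+1)}_-)=\tautwo(N\doublesms S^{(i)},S^{(i)}_-)$. Chaining these equalities from $\Sigma$ to $S'$ and combining with Lemma~\ref{lem:killingproductpieces} gives the triple equality in (4). Throughout one must check that Theorem~\ref{thm:reltorexist} applies at each stage so that all these relative torsions are honest elements of $\R_{>0}$ — this follows since every intermediate surface is Thurston norm minimizing.

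\textbf{Main obstacle.} The delicate point I expect is the careful bookkeeping in the merging step: one must choose the tubing arc and the side it is attached to so that (a) the resulting surface is still \emph{embedded} and norm minimizing — in particular that the surgery genuinely decreases the number of components of the complement without creating a compressible torus or an inessential annulus — and (b) the handle neighborhood glues onto the \emph{old} boundary along pieces for which Lemma~\ref{lem:tautwo-basics}(5) is literally applicable (product pairs, $\pi_1$-injective inclusions, vanishing $L^2$-Betti numbers of all overlap pairs). Making the induction hypothesis strong enough — "after $k$ steps $S^{(k)}$ is Thurston norm minimizing, $[S^{(k)}]=[\Sigma]$, $N\doublesms S^{(k)}$ has $\le (\#\text{components of }N\doublesms\Sigma)-k$ components, and $\tautwo(N\doublesms S^{(k)},S^{(k)}_-)=\tautwo(N\doublesms\Sigma,\Sigma_-)$" — and verifying the geometric details of (a) is where the real work lies; the torsion computation in (b) is then a routine application of the lemmas collected in Section~\ref{sec:prel}.
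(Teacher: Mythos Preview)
Your merging step has a genuine gap. You propose to modify $\Sigma$ by \emph{tubing} --- attaching a $1$-handle along an arc --- but this changes the Euler characteristic: adding a tube to a closed surface decreases $\chi$ by $2$, hence increases $\chi_-$ by $2$, so the resulting surface is no longer Thurston norm minimizing and item~(2) fails immediately. (In particular Theorem~\ref{thm:reltorexist} no longer applies, so the torsions in your induction are not even known to be defined.) There is also a geometric inconsistency in your description: an arc lying in $N\doublesms\Sigma$ cannot join two \emph{distinct} components $V_1,V_2$ of $N\doublesms\Sigma$, so the tube you want does not exist in the complement. The phrase ``or equivalently pass to a weighted surface where one component is replaced by a connected sum/band sum'' conflates two different operations; they are not equivalent.

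The paper's argument --- which is what Turaev actually does in \cite[Lemma~1.2]{Tu02} --- never alters the topology of the underlying surface. One only changes \emph{weights}. Given a component $C\subset S'$ of minimal weight $w$ whose two sides $C_\pm$ lie in different components $M_0,M_1$ of $N\doublesms S'$, one uses the homological identity $[R_+]=[R_-]$ (where $R_\pm = S'_\pm\cap M_1$) to add $w$ to the weight of each component contributing to $R_+$ and subtract $w$ from each component contributing to $R_-$. This drops the weight of $C$ to zero, removing it from $S'$, merges $M_0$ and $M_1$, and leaves both the homology class and the complexity $\chi_-$ unchanged. Torsion invariance is handled not by the gluing formula Lemma~\ref{lem:tautwo-basics}(5) but by a dedicated short-exact-sequence lemma (Lemma~\ref{lem:l2torsionunchanged}): after pushing the new surface $T$ slightly off the old $S$, the complement $N\doublesms(S\cup T)$ splits as $N_1\sqcup N_2$ with $\partial N_1=S_-\cup T_+$ and $\partial N_2=T_-\cup S_+$, and multiplicativity of $L^2$-torsion in the two resulting short exact sequences gives $\tau^{(2)}(N\doublesms S,S_-)=\tau^{(2)}(N_1,S_-)\cdot\tau^{(2)}(N_2,T_-)=\tau^{(2)}(N\doublesms T,T_-)$.
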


In the proof of Proposition~\ref{prop:turaevsalgorithm} we will need the following lemma.
\begin{lemma}\label{lem:l2torsionunchanged}
	Let $N$ be a closed irreducible 3-manifold. Furthermore let $S$ and $T$ be  two Thurston norm minimizing surfaces in $N$ which do not intersect, i.e.\ with $S\cap T=\emptyset$. This means that  $S\cup T$ is an embedded surface. If $N\doublesms (S\cup T)$ decomposes into two $($not necessarily connected$)$ manifolds $N_1$ and $N_2$ such that $\partial N_1= S_-\cup T_+$ and $\partial N_2= T_-\cup S_+$ $($as sketched in Figure \ref{fig:l2torsionunchanged}$)$ then
	\[\tautwo(N\doublesms S, S_-)=\tautwo(N\doublesms T, T_-). \]
\end{lemma}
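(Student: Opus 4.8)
\textbf{Proof plan for Lemma~\ref{lem:l2torsionunchanged}.}

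The plan is to exploit the fact that, by hypothesis, $N \doublesms (S \cup T)$ is cut into the two pieces $N_1$ and $N_2$, so that we have two natural ways of reassembling things. First I would give names to the pieces: write $N \doublesms S = N_1 \cup_{T} N_2$, where the two copies of $T$ along which we glue are $T_+ \subset \partial N_1$ and $T_- \subset \partial N_2$; similarly $N \doublesms T = N_2 \cup_{S} N_1$, glued along $S_+ \subset \partial N_2$ and $S_- \subset \partial N_1$. Thus as \emph{spaces} $N \doublesms S$ and $N \doublesms T$ are built from the very same two manifolds $N_1$ and $N_2$; only the identification of boundary components and the choice of the distinguished ``$-$'' boundary differs. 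So the strategy is to compute both $\tautwo(N\doublesms S, S_-)$ and $\tautwo(N \doublesms T, T_-)$ by the gluing formula of Lemma~\ref{lem:tautwo-basics}(5) applied to these decompositions, and to check that the resulting products of $L^2$-torsions of the pieces agree.

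Concretely, I would apply Lemma~\ref{lem:tautwo-basics}(5) to the decomposition $(N\doublesms S, S_-) = (N_1, S_-) \cup (N_2, \emptyset)$ with overlap $N_1 \cap N_2 = T$ (more precisely its two copies $T_\pm$, which are product-collared, so their $L^2$-torsion is $1$). The hypotheses of that gluing lemma are met: by Theorem~\ref{thm:reltorexist} the relevant $L^2$-Betti numbers vanish since $S$ (resp. $T$) is Thurston norm minimizing, and by Proposition~\ref{prop:everything-pi1-injective}(1) every component of a Thurston norm minimizing surface and every complementary piece is $\pi_1$-injective, so the inclusions $\pi_1(T) \hookrightarrow \pi_1(N_1)$ and $\pi_1(T)\hookrightarrow \pi_1(N_2)$ are monomorphisms (and likewise for $S$). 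Since $T$ is a closed surface, hence an $S^1$-bundle only in trivial cases — here instead I would use that $\tautwo$ of any properly embedded surface occurring as an overlap is to be read as $\tautwo(T, T\cap \partial(\cdot))$; but along $T_\pm$ the distinguished subsurface is empty, and $\tautwo(T)$ need not be $1$ — so I must keep the overlap term. This gives
\[
\tautwo(N\doublesms S, S_-) \,=\, \tautwo(N_1, S_-)\cdot \tautwo(N_2) \cdot \tautwo(T)^{-1}.
\]
Running the same argument for the decomposition $(N\doublesms T, T_-) = (N_2, \emptyset) \cup (N_1, S_-\text{-part})$ — careful: here the distinguished boundary is $T_-\subset \partial N_2$, so I would write $(N\doublesms T, T_-) = (N_2, T_-)\cup (N_1, \emptyset)$ with overlap $S$, yielding
\[
\tautwo(N\doublesms T, T_-) \,=\, \tautwo(N_2, T_-)\cdot \tautwo(N_1)\cdot \tautwo(S)^{-1}.
\]
To finish I would match the two expressions: I would show $\tautwo(N_1, S_-) = \tautwo(N_1,\emptyset)\cdot\tautwo(S)^{-1}\cdot(\text{collar contributions})$ — more cleanly, apply Lemma~\ref{lem:tautwo-basics}(4) to the pairs $(N_1,S_-)$ and $(N_2,T_-)$, using $\pi_1$-injectivity of $S_-\hookrightarrow N_1$ and $T_-\hookrightarrow N_2$, to rewrite $\tautwo(N_1,S_-) = \tautwo(N_1)\cdot \tautwo(S_-)^{-1}$ and $\tautwo(N_2,T_-)=\tautwo(N_2)\cdot\tautwo(T_-)^{-1}$, provided the absolute torsions are defined — which requires vanishing of the $L^2$-Betti numbers of $N_1, N_2, S, T$ individually. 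Substituting, both sides become $\tautwo(N_1)\cdot\tautwo(N_2)\cdot\tautwo(S)^{-1}\cdot\tautwo(T)^{-1}$, which proves the claimed equality.

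\textbf{The main obstacle.} The delicate point is the applicability of the gluing and decomposition formulas of Lemma~\ref{lem:tautwo-basics}: parts (4) and (5) as stated presuppose that the \emph{absolute} $L^2$-torsions $\tautwo(N_1), \tautwo(N_2), \tautwo(S), \tautwo(T)$ are defined, i.e.\ that these spaces are $L^2$-acyclic, which is \emph{not} automatic — a Thurston norm minimizing surface $S$ with $\chi_-(S)>0$ and its complement typically have nonvanishing $L^2$-Betti numbers. So I expect the real work is to avoid part (4) altogether and instead apply the Mayer--Vietoris-type formula of Lemma~\ref{lem:tautwo-basics}(5) \emph{directly} to a common refinement: view both $N\doublesms S$ and $N\doublesms T$ as glued from $N_1$ and $N_2$ along \emph{both} $S$ and $T$ simultaneously, so that the only torsions entering are $\tautwo(N_i, \partial_- N_i)$ where $\partial_- N_i$ is chosen to make the relative $L^2$-Betti numbers vanish (guaranteed by Theorem~\ref{thm:reltorexist}), together with the product collars (torsion $1$ by Lemma~\ref{lem:tautwo-basics}(1)). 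With that bookkeeping, the equality $\tautwo(N\doublesms S, S_-) = \tautwo(N\doublesms T, T_-)$ reduces to the symmetric identity
\[
\tautwo(N_1, S_-\cup T_+)\cdot\tautwo(N_2, S_+\cup T_-) \;=\; \tautwo(N_1, S_-\cup T_+)\cdot\tautwo(N_2, S_+\cup T_-),
\]
i.e.\ a tautology once one has carefully arranged that each side is computed from the same relative torsions of $N_1$ and $N_2$; the content is entirely in verifying that Theorem~\ref{thm:reltorexist} and Proposition~\ref{prop:everything-pi1-injective} supply exactly the acyclicity and $\pi_1$-injectivity hypotheses needed at each application of Lemma~\ref{lem:tautwo-basics}(5).
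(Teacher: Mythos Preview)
Your high-level idea---that $N\doublesms S$ and $N\doublesms T$ are assembled from the same two pieces $N_1$ and $N_2$, so both torsions should factor as the same product---is exactly the idea the paper uses. But your execution has a genuine gap that your ``fix'' does not close.

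The obstacle you flag is real: Lemma~\ref{lem:tautwo-basics}(5) requires the overlap pair $(Z,Z\cap N)$ to be $L^2$-acyclic, and for a closed surface $T$ with a component of genus $\geq 2$ (entirely possible for a Thurston norm minimizing surface) this fails, since $b_1^{(2)}(T)=-\chi(T)>0$. So your first displayed formula $\tautwo(N\doublesms S,S_-)=\tautwo(N_1,S_-)\cdot\tautwo(N_2)\cdot\tautwo(T)^{-1}$ is not justified. Your proposed repair then asserts that both sides reduce to $\tautwo(N_1,S_-\cup T_+)\cdot\tautwo(N_2,S_+\cup T_-)$; but $S_-\cup T_+=\partial N_1$ and $S_+\cup T_-=\partial N_2$, so these are $\tautwo(N_i,\partial N_i)$, whose $L^2$-Betti numbers (by Poincar\'e--Lefschetz) equal those of $N_i$ and again need not vanish---Theorem~\ref{thm:reltorexist} does not help here. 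More fundamentally, you never say \emph{how} to pass from $\tautwo(N\doublesms S,S_-)$ to any such product: every route through Lemma~\ref{lem:tautwo-basics}(5) for the decomposition along $T$ (or $S$) hits the same non-acyclic overlap.

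The paper avoids Lemma~\ref{lem:tautwo-basics}(5) altogether. It works directly with the short exact sequence of cellular chain complexes
\[
0\longrightarrow C_*^{(2)}(N_1,S_-)\longrightarrow C_*^{(2)}(N\doublesms S,S_-)\longrightarrow C_*^{(2)}(N\doublesms S,N_1)\longrightarrow 0,
\]
identifies the quotient by excision as $C_*^{(2)}(N_2,T_-)$, and invokes multiplicativity of $L^2$-torsion for short exact sequences \cite[Theorem~3.35(1)]{Lu02}. This gives $\tautwo(N\doublesms S,S_-)=\tautwo(N_1,S_-)\cdot\tautwo(N_2,T_-)$, and the symmetric sequence with $S$ and $T$ swapped gives the same product for $\tautwo(N\doublesms T,T_-)$. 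The crucial point is that this form of multiplicativity only needs the three \emph{relative} complexes in the sequence to be weakly acyclic and of determinant class; each of $(N_1,S_-)$, $(N_2,T_-)$, $(N\doublesms S,S_-)$, $(N\doublesms T,T_-)$ is a taut sutured pair, so \cite{Her18} supplies exactly that. No absolute torsion of a surface or of an $N_i$ ever appears.
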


\begin{figure}[h]
\begin{center}
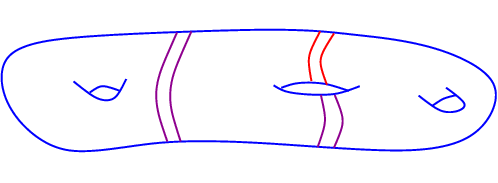
\caption{Illustration of Lemma~\ref{lem:l2torsionunchanged}.}\label{fig:l2torsionunchanged}
\end{center}
\end{figure}

\begin{proof}
	Since $N\doublesms (S\cup T)$ decomposes into two manifolds $N_1$ and $N_2$ such that $\partial N_1=  S_-\cup T_+$ and $\partial N_2=T_-\cup S_+$ we can consider the short exact sequence of chain complexes
	\[
	0\to C_*^{(2)}(N_1,S_-)\to C_*^{(2)}(N\doublesms S,S_-)\to C_*^{(2)}(N\doublesms S, N_1)\to 0 .
	\] 	
	The inclusion defines a natural isomorphism $C_*^{(2)}(N\doublesms S, N_1)= C_*^{(2)}(N_2,T_-)$ of  chain complexes. (The equality in the previous sentence follows from the fact that we work with cellular chain complexes.) Note that the three chain complexes are weakly acyclic and of determinant class by \cite[Theorem~1.1]{Her18} and our hypothesis that $S$ and $T$ are Thurston norm minimizing.
	Then by the multiplicativity  of the $L^2$-torsion \cite[Theorem 3.35(1)]{Lu02} one has
	\[ 
	\tautwo(N_1,S_-)\cdot \tautwo(N_2,T_-)=\tautwo(N\doublesms S,S_-).
	\]
	One could also consider the short exact sequence of chain complexes
	\[
	0\to C_*^{(2)}(N_2,T_-)\to C_*^{(2)}(N\doublesms T,T_-)\to C_*^{(2)}(N\doublesms T, N_2)\to 0.
	\]
	Again we have a natural isomorphism  $C_*^{(2)}(N\doublesms T, N_2)= C_*^{(2)}(N_1,S_-)$ of  chain complexes. Thus, again by multiplicativity, we obtain 
	\[ 
	\tautwo(N_1,S_-)\cdot \tautwo(N_2,T_-)=\tautwo(N\doublesms T,T_-).
	\]
	Combining the above two equalities we obtain that $\tautwo(N\doublesms T,T_-)=\tautwo(N\doublesms S,S_-)$.
\end{proof}

\begin{figure}[!h]
\begin{center}
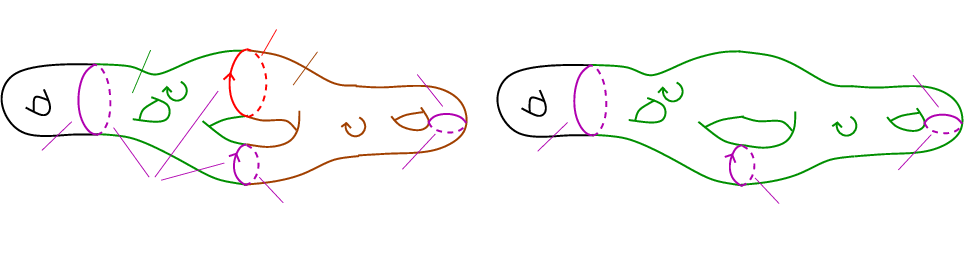
\caption{Illustration of the proof of Proposition~\ref{prop:turaevsalgorithm}.}
\label{fig:turaev-algorithm}
\end{center}
\end{figure}

\begin{proof}[Proof of Proposition~\ref{prop:turaevsalgorithm}]
    Throughout the proof, given any weighted surface $\what{S}$ in $N$ we write $c\left (\what{S}\right ):=\# \pi_0(N\doublesms S')$. 
    Now let $\Sigma$ be a Thurston norm minimizing surface.
    We need to show  that there is a weighted surface $\what{S}$ with Properties (1) to (4).
    First we take $\what{S}$ to be the weighted surface obtained from $\Sigma$ by assign to each component of $\Sigma$ the weight $1$. This weighted surface satisfies Properties  (1), (2) and (4). 
    Clearly we only need to prove that given any weighted surface $\what{S}$ with 
    Properties  (1), (2) and (4) and $c\left (\what{S}\right )>1$ there exists another weighted surface $\what{T}$ with $c\left (\what{T}\right )<c\left (\what{S}\right )$ that still satisfies Properties (1), (2) and (4). 
    
	Let $\what{S}=\left\{(S_i, w_i)\right\}_{i\in I}$ be a weighted surface with Properties (1), (2) and (4) and with $c\left (\what{S}\right )>1$. Since $N$ is connected and $c\left (\what{S}\right )>1$ there exists a component $C\subset S'$ such that $C_+$ and $C_-$ lie in different components of $N\doublesms S'$. Let $C$ be a component with minimal weight among all such components. We denote by $M_0$ and $M_1$ the components of $N\doublesms S'$ containing $C_+$ and $C_-$ respectively.
	
	Note that the boundary $\partial M_1$ comes with a decomposition into two oriented surfaces $R_+:= S_+'\cap M_1$ and $R_-:=S_-'\cap M_1$.  So as homology classes one gets the equality 
	\begin{align}\label{eq:homologyequation}
	[R_+]\,=\,[R_-]\,\in \, H_2(N).
	\end{align}
	
	Moreover, from the assumption that $S$ is Thurston norm minimizing we obtain  that $\chi_-(R_+)=\chi_-(R_-)$. (This can be seen as follows:  the surfaces  $R_+$ and $R_-$ are homologous, and if $\chi_-(R_-)<\chi_-(R_+)$ then we could replace $R_+$ by a parallel copy of  $R_-$ to obtain a surface of lower complexity.)
	Let $w$ be the weight of $C$. We define a new weighted surface in two steps. First as an intermediate step we consider the weighted surface $\left\{(S_i, \tilde{w_i} )\right\}_{i\in I}$, where we have the same underlying surface but the weights change by  
	\[
	\tilde{w_i}\,\:=\,\,\begin{cases}
	w_i+ w& \mbox{ if } {S_i}_+ \subset R_+\\
	w_i &\mbox{ else.}
	\end{cases}
	\]
	Next we define the weighted surface $\widehat{T}:=\left\{(S_i, w'_i)\right\}_{i\in I}$, where the weights are given by
	\[
	w'_i\,\:=\,\,\begin{cases}
	\tilde{w_i}- w& \mbox{ if } {S_i}_- \subset R_-\\
	\tilde{w_i} &\mbox{ else}
	\end{cases}
	\]
	and we add the convention that if $w'_i=0$ then $S_i:=\emptyset$.
	It may happen that $S_{i-}\subset R_-$ and $S_{i+}\subset R_+$. But in this case we have by our definition $w_i=w'_i$. From Equation~\eqref{eq:homologyequation} we obtain  that $\left [\widehat{T}\right ]=\left [\widehat{S}\right ]$. Moreover, $c\left (\what{T}\right )\leqslant c\left (\what{S}\right )-1 < c\left (\what{S}\right )$ since $M_0$ and $ M_1$ lie in the same  component of  $N\doublesms T'$ (see Figure~\ref{fig:turaev-algorithm}) and $T'$ is by construction a subsurface of $S'$.
	
	If we push $T$ slightly in the $-$-direction, then we can make $T$ and $S$ disjoint. Then $S$ and $T$ satisfy the condition in Lemma ~\ref{lem:l2torsionunchanged} (see Figure~\ref{fig:turaev-algorithm} for an illustration). Therefore we have $\tautwo(N\doublesms T, T_-)=\tautwo(N\doublesms S, S_-)$. Hence the properties (1),(2) and (4) do not change under the induction step, which proves the proposition.
\end{proof}

%=============================================
\section{The proof of the main theorem~\ref{main-theorem}} \label{sec:proof}

%=============================
\subsection{The first inequality in the main theorem~\ref{main-theorem}}

We start out with the following definition.

\begin{definition}
Let $N$ be a connected irreducible $3$-manifold with empty or toroidal boundary and let $\phi \in H^1(N;\Z)$.
We define
\[ A(N,\phi)\,\,:=\,\, \smprod{{\ba{c}{\tiny{\mbox{$M\in \JJ(N)$}}}\\ \tiny{\mbox{with $\phi|_M=0$}}\ea}}{} e^{\vol(M)/6\pi}.\]
\end{definition}

The following lemma says that the first two terms in Theorem \ref{main-theorem} are multiplicative with respect to the JSJ-decomposition. 

\begin{lemma}\label{lem:a-c-multiplicative}
Let $N$ be a connected irreducible $3$-manifold with empty or toroidal boundary and let $\phi \in H^1(N;\Z)$.
Then
\[ A(N,\phi)\,\,=\,\, \smprod{M\in \JJ(N)}{} A(M,\phi|_{M})\quad \mbox{ and }\quad
 C(N,\phi)\,\,=\,\, \smprod{M\in \JJ(N)}{} C(M,\phi|_{M}).\]
\end{lemma}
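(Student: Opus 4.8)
The plan is to handle the two equalities separately. The first is essentially definitional: for any JSJ-component $M\in\JJ(N)$ the manifold $M$ is by construction Seifert fibered or atoroidal, so its own JSJ-decomposition is trivial, $\JJ(M)=\{M\}$, and $\vol(M)$ as it occurs in the defining product of $A(N,\phi)$ is the same whether computed in $N$ or in $M$. Hence $A(M,\phi|_M)=e^{\vol(M)/6\pi}$ if $\phi|_M=0$ and $A(M,\phi|_M)=1$ (an empty product) if $\phi|_M\neq 0$, so that $\prod_{M\in\JJ(N)}A(M,\phi|_M)$ is exactly the product defining $A(N,\phi)$.

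For the second equality I would first cut $N$ along its JSJ-tori, obtaining the disjoint union $\bigsqcup_{M\in\JJ(N)}M$, from which $N$ is reconstructed by regluing the JSJ-tori in pairs. By Proposition~\ref{prop:everything-pi1-injective}(2) the JSJ-tori are $\pi_1$-injective in $N$; since each inclusion $\pi_1(M)\hookrightarrow\pi_1(N)$ is injective as well, every JSJ-torus is $\pi_1$-injective, hence incompressible, in each piece of which it is a boundary component. So Theorem~\ref{thm:previous}(3) applies and shows that $\tautwo(N,\phi)$ agrees, up to multiplication by some $t\mapsto t^k$, with the $L^2$-Alexander torsion of $\bigsqcup_{M\in\JJ(N)}M$ equipped with the restricted class, which by Theorem~\ref{thm:previous}(2) equals $\prod_{M\in\JJ(N)}\tautwo(M,\phi|_M)$.

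It remains to pass to leading coefficients. Each factor $\tautwo(M,\phi|_M)$ is $\R_{>0}$-valued by Theorem~\ref{thm:previous}(4) and has a leading coefficient $C(M,\phi|_M)$ by Theorem~\ref{thm:previous2}(4), i.e.\ there is $k_M\in\R$ with $\lim_{t\to\infty}C(M,\phi|_M)\,t^{k_M}/\tautwo(M,\phi|_M)(t)=1$. Multiplying these finitely many limits shows that $\prod_{M\in\JJ(N)}\tautwo(M,\phi|_M)$ has leading coefficient $\prod_{M\in\JJ(N)}C(M,\phi|_M)$; and since the leading coefficient is visibly unchanged by multiplication with $t\mapsto t^j$, it is an invariant of the $\doteq$-class. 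As $\tautwo(N,\phi)$ lies in that class and has leading coefficient $C(N,\phi)$ by definition, we conclude $C(N,\phi)=\prod_{M\in\JJ(N)}C(M,\phi|_M)$. I do not expect any genuine obstacle here; the only points requiring a little care are the incompressibility verification that makes Theorem~\ref{thm:previous}(3) applicable, and the degenerate situation where a JSJ-piece — or $N$ itself — is $S^1\times D^2$, which is not covered by Theorem~\ref{thm:previous2}(4): in that case $\tautwo(S^1\times D^2,\cdot)\doteq 1$ (it is an $S^1$-bundle, cf.\ Lemma~\ref{lem:tautwo-basics}(3)), so such a piece contributes the trivial factor $1$, and one may simply assume $N\ne S^1\times D^2$ throughout.
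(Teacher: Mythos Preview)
Your proposal is correct and follows essentially the same approach as the paper: the paper's proof consists of the single line ``The first equality is an immediate consequence of the definitions. The second equality follows from Theorem~\ref{thm:previous} (2) and (3),'' and your argument is simply a careful unpacking of that line. Your additional remarks on incompressibility and on the passage from $\doteq$-equality of torsion functions to equality of leading coefficients fill in details the paper leaves implicit; the discussion of the $S^1\times D^2$ case is harmless but unnecessary, since an irreducible $N\ne S^1\times D^2$ with toroidal boundary cannot have $S^1\times D^2$ as a JSJ-piece (its boundary torus would be compressible).
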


\begin{proof}
The first equality is an immediate consequence of the definitions.
The second equality follows from Theorem~\ref{thm:previous} (2) and (3).
\end{proof}

Next we consider Seifert fibered spaces and hyperbolic spaces separately. The following lemma implies that the first inequality is an equality for a Seifert fibered space.

\begin{lemma}\label{lem:a-c-for-sfs}
Let $N$ be a Seifert fibered $3$-manifold and let $\phi \in H^1(N;\Z)$.
Then $A(N,\phi)=1$ and $C(N,\phi)=1$.
\end{lemma}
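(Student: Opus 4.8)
The plan is to prove the two equalities separately: $A(N,\phi)=1$ is a bookkeeping statement about the JSJ decomposition, while $C(N,\phi)=1$ will follow by feeding the graph-manifold formula of Theorem~\ref{thm:previous2}(1) into the defining property of the leading coefficient in Theorem~\ref{thm:previous2}(4). For the first equality: a Seifert fibered $3$-manifold is geometric, so its JSJ decomposition is trivial, i.e.\ $\JJ(N)=\{N\}$, and $N$ carries no hyperbolic piece, whence $\vol(N)=0$. Hence if $\phi\neq 0$ the index set of the product defining $A(N,\phi)$ is empty and the product equals $1$, whereas if $\phi=0$ it equals $e^{\vol(N)/6\pi}=e^{0}=1$; in both cases $A(N,\phi)=1$.

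For the equality $C(N,\phi)=1$, assume first that $N\neq S^1\times D^2$. Every Seifert fibered $3$-manifold is in particular a graph manifold, so Theorem~\ref{thm:previous2}(1) yields $\tautwo(N,\phi)(t)\doteq\max\{1,t^{x_N(\phi)}\}$; equivalently, $\tau(t):=\max\{1,t^{x_N(\phi)}\}$ is a representative of $\tautwo(N,\phi)$. Since $x_N$ is a seminorm we have $x_N(\phi)\geqslant 0$, and therefore $\tau(t)=t^{x_N(\phi)}$ for all $t\geqslant 1$. Applying the defining property of $C(N,\phi)$ from Theorem~\ref{thm:previous2}(4) to this particular representative $\tau$, the exponent $k$ occurring there is forced to equal $x_N(\phi)$ (else the limit would be $0$ or $+\infty$), and then $\lim_{t\to\infty}C(N,\phi)\cdot t^{k}/\tau(t)=C(N,\phi)$, so that $C(N,\phi)=1$.

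It remains to cover the case $N=S^1\times D^2$ by a direct computation. Here $N$ deformation retracts onto its core circle, $\pi_1(N)\cong\Z$ is generated by some $g$, and the $\kappa(\Z,\phi,t)$-twisted cellular chain complex of the circle reduces to $\ell^{2}(\Z)\xrightarrow{\,R_{1-t^{\phi(g)}g}\,}\ell^{2}(\Z)$, whose regular Fuglede--Kadison determinant equals $\max\{1,|t^{\phi(g)}|\}=\max\{1,t^{\phi(g)}\}$ by Proposition~\ref{prop:op_det}(6). Thus $\tautwo(S^1\times D^2,\phi)(t)\doteq\max\{1,t^{\phi(g)}\}^{-1}$, which coincides for large $t$ with a monomial of leading coefficient $1$, so again $C(S^1\times D^2,\phi)=1$; and $A(S^1\times D^2,\phi)=1$ is immediate since $S^1\times D^2$ is atoroidal.

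I do not expect a serious obstacle: the lemma is essentially a corollary of the third author's computation of the $L^2$-Alexander torsion for graph manifolds (Theorem~\ref{thm:previous2}(1)). The two points that deserve a moment's attention are recording that $x_N(\phi)\geqslant 0$ --- which is precisely what makes $\max\{1,t^{x_N(\phi)}\}$ behave, as $t\to\infty$, like the monomial $t^{x_N(\phi)}$ of unit leading coefficient --- and carefully matching the $\doteq$-ambiguity in $\tautwo(N,\phi)$ against the definition of $C(N,\phi)$ as the coefficient of the asymptotically dominant monomial.
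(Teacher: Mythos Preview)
Your proof is correct and essentially the same as the paper's, just more explicit: the paper records $A(N,\phi)=1$ as immediate from the definition and cites \cite{Her16} for $C(N,\phi)=1$, and your argument is precisely the natural unpacking of that citation via Theorem~\ref{thm:previous2}(1) and the defining property in Theorem~\ref{thm:previous2}(4). Your separate treatment of $N=S^1\times D^2$ is a welcome bit of care, since Theorem~\ref{thm:previous2} excludes that case; the paper does not address it explicitly in the proof of the lemma.
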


\begin{proof}
The first equality holds by definition, the second equality is proved in 
Theorem \ref{thm:previous2}(1).
\end{proof}

The following proposition is precisely the first inequality in Theorem~\ref{main-theorem} for hyperbolic manifolds. 

\begin{proposition}\label{prop:hyp_first}
Let $N$ be a hyperbolic 3-manifold and let $\phi \in H^1(N;\Z)$.
Then we have $A(N,\phi)\leqslant C(N,\phi)$. 
\end{proposition}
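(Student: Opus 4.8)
The plan is to reduce to the case where $\phi\ne 0$ and the surface $\Sigma$ dual to $\phi$ is connected and nonseparating, and then to relate $A(N,\phi)=\prod_{M\in\JJ(N),\,\phi|_M=0}e^{\vol(M)/6\pi}$ directly to a sublimit of the $L^2$-Alexander torsion as $t\to\infty$. First I would dispose of the trivial case: if $\phi=0$ then $A(N,\phi)=e^{\vol(N)/6\pi}=\tautwo(N,\phi)(1)=C(N,\phi)$ by Theorem~\ref{thm:previous}(1) and Lemma~\ref{lem:zero-phi}, so assume $\phi\ne 0$. Since $N$ is hyperbolic, it is irreducible with trivial JSJ-decomposition, so either $\JJ(N)=\{N\}$ or we are looking at a single hyperbolic piece; in the relevant case $\phi|_N\ne 0$ forces $A(N,\phi)=1$ (the empty product), and then the claim $1\le C(N,\phi)$ is exactly Theorem~\ref{thm:previous2}(4)(a). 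So the genuine content is concentrated in the situation where $N$ is hyperbolic but $\phi$ restricts to something that one must track carefully — in fact for a single hyperbolic $N$ with $\phi\ne 0$ the left-hand side is simply $1$.

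Given that, the cleanest route is: \emph{in the hyperbolic case $A(N,\phi)\in\{1,\ e^{\vol(N)/6\pi}\}$, equal to $e^{\vol(N)/6\pi}$ precisely when $\phi=0$, and equal to $1$ otherwise.} When $A(N,\phi)=e^{\vol(N)/6\pi}$ (i.e.\ $\phi=0$) equality with $C(N,\phi)$ holds as above. When $A(N,\phi)=1$ the inequality $1\le C(N,\phi)$ is immediate from Theorem~\ref{thm:previous2}(4)(a), which asserts $C(N,\phi)\in[1,e^{\vol(N)/6\pi}]$. So I would structure the proof as a short case distinction invoking exactly these two earlier results, with the observation about the JSJ-decomposition of a hyperbolic manifold — namely that it has no nontrivial JSJ-tori, hence $\JJ(N)=\{N\}$ — as the only genuinely new remark.

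The main obstacle, such as it is, is purely bookkeeping: one must be careful that "hyperbolic 3-manifold" in this paper's convention (empty or toroidal boundary, interior admits a complete hyperbolic metric) indeed has trivial JSJ-decomposition, so that $\JJ(N)$ is the singleton $\{N\}$ and the condition "$M\in\JJ(N)$ with $\phi|_M=0$" is satisfied by no piece when $\phi\ne 0$ and by the single piece $N$ when $\phi=0$. With that settled, the inequality $A(N,\phi)\le C(N,\phi)$ for hyperbolic $N$ follows with no analysis: it is the lower bound half of Theorem~\ref{thm:previous2}(4)(a) in the nonzero case, and an equality (both sides $e^{\vol(N)/6\pi}$) in the zero case. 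I would write this up in a handful of lines rather than invoking Turaev's algorithm or the relative torsion machinery, which are needed only for the \emph{second}, harder inequality of Theorem~\ref{main-theorem}.
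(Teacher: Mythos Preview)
Your proposal is correct and matches the paper's own proof essentially line for line: split into the cases $\phi=0$ (where $A(N,\phi)=e^{\vol(N)/6\pi}=C(N,\phi)$ by Lemma~\ref{lem:zero-phi}(2) and Theorem~\ref{thm:previous}(1)) and $\phi\ne 0$ (where $A(N,\phi)=1$ since $\JJ(N)=\{N\}$, and $C(N,\phi)\geqslant 1$ by Theorem~\ref{thm:previous2}(4)(a)). Your opening sentence about reducing to a connected nonseparating $\Sigma$ and relating $A(N,\phi)$ to a sublimit of the torsion is a false start that you rightly abandon; drop it from the write-up and keep only the short case distinction, which is exactly what the paper does.
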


\begin{proof}
If $\phi=0$, then it follows from Lemma~\ref{lem:zero-phi} (2) and Theorem~\ref{thm:previous} (1) that $$A(N,\phi)=e^{\vol(N)/6\pi} = C(N,\phi).$$
Now suppose that $\phi\ne 0$. Then by definition we have $A(N,\phi)=1$. Furthermore by Theorem~\ref{thm:previous2} (4a) we have  $C(N,\phi)\geqslant 1$. 
\end{proof}

%=============================
\subsection{The second inequality in the  main theorem~\ref{main-theorem}}\label{sub:second}
We now prove the second inequality appearing in Theorem~\ref{main-theorem}.
Put differently, we prove the following proposition.

\begin{proposition}\label{prop:2nd-inequality}
Let $N$ be a connected irreducible $3$-manifold with empty or toroidal boundary.
Furthermore let $\phi \in H^1(N;\Z)$ and $\Sigma$ be a Thurston norm minimizing surface dual to $\phi$. Then
\[
C(N,\phi)
\,\, \leqslant\,\,
\tau^{(2)}(N\doublesms \Sigma, \Sigma_-).\]
\end{proposition}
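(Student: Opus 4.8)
The plan is to reduce the statement to a statement about the asymptotic behaviour of the $L^2$-Alexander torsion $\tautwo(N,\phi)(t)$ as $t\to\infty$, and then extract that asymptotic from a Mayer--Vietoris style decomposition of $N$ along the surface $\Sigma$. Recall that $C(N,\phi)$ is, by Theorem~\ref{thm:previous2}~(4), the constant such that $\tautwo(N,\phi)(t)\doteq C(N,\phi)\cdot t^k$ for $t\to\infty$, i.e.\ $C(N,\phi)=\lim_{t\to\infty}\tautwo(N,\phi)(t)/t^{x_N(\phi)}$ (after choosing the representative with no $t^k$-ambiguity and using statement~(3) to identify the exponent with $x_N(\phi)$). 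So it suffices to show
\[ \limsup_{t\to\infty}\ \frac{\tautwo(N,\phi)(t)}{t^{x_N(\phi)}}\ \leqslant\ \tautwo(N\doublesms\Sigma,\Sigma_-). \]

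First I would set up the geometric decomposition. Write $X=N\doublesms\Sigma$ and let $i_\pm\colon\Sigma\hookrightarrow X$ be the two inclusions of $\Sigma_\pm=\Sigma\times\{\pm1\}$; gluing $\Sigma_+$ to $\Sigma_-$ recovers $N$. Passing to the infinite cyclic cover (or more precisely working over $\ell^2(\pi_1 N)$ with the $\kappa(\pi_1N,\phi,t)$-twist), the twisting parameter $t$ weights the gluing map across $\Sigma$. This gives a short exact sequence of Hilbert $\NN(\pi_1N)$-chain complexes relating $C_*^{(2)}(N,\phi,t)$ to $C_*^{(2)}(X,\Sigma_-,\phi,t)$ and $C_*^{(2)}(\Sigma,\phi|_\Sigma,t)$, and hence by the multiplicativity of the $L^2$-torsion (Theorem~3.35 of \cite{Lu02}, as used already in the proof of Lemma~\ref{lem:l2torsionunchanged}) an identity expressing $\tautwo(N,\phi)(t)$ as a ratio of Fuglede--Kadison determinants of matrices over $\C[\pi_1N]$ whose entries are polynomial in $t$. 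Here one should use Proposition~\ref{prop:turaevsalgorithm} first: replacing $\Sigma$ by Turaev's surface $S'$ (legitimate since $\tautwo(N\doublesms S',S'_-)=\tautwo(N\doublesms\Sigma,\Sigma_-)$) one may assume all but one component of $X$ is a product $\Sigma_j\times[-1,1]$, which by Lemma~\ref{lem:tautwo-basics}~(1) contributes trivially and, crucially, resolves the base point issues so that there is a single ambient group.

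The heart of the argument is then a determinant estimate: for the matrix $A(t)$ over $\C[\pi_1N]$ representing the relevant boundary/gluing operator, with $A(t)=tA_1+A_0$ or more generally a $t$-weighted sum, one wants $\det_{\pi_1 N}^r(A(t))/t^{?}\to \det_{\pi_1 N}^r(A_\infty)$, where $A_\infty$ is the "leading term" obtained by deleting the non-maximal-degree contributions, and $\det^r_{\pi_1 N}(A_\infty)$ is exactly the relative torsion $\tautwo(X,\Sigma_-)$ computed via Lemma~\ref{lem:calculatetorsion}. The inequality direction — $\limsup$ rather than a limit — is precisely what Liu's upper-semicontinuity lemma, Lemma~\ref{lem:uppersemicont}, buys us: rescaling $A(t)$ by the appropriate power of $t$ and letting $t\to\infty$, the rescaled matrices converge in norm to $A_\infty$ (the lower-degree terms are killed by the normalization once one passes to the infinite cyclic cover and uses that $\phi$ acts with positive shift on the relevant cells), and $\limsup_t\det^r(A(t)/t^{\deg})\leqslant\det^r(A_\infty)$. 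Assembling the ratio of the three determinant factors and matching exponents against $x_N(\phi)=\chi_-(\Sigma)$ yields the claimed bound. The main obstacle I anticipate is exactly this normalization/convergence step: one must choose the cellular lifts and the coboundary so that, after dividing by $t^{x_N(\phi)}$, the "wrong-degree" monomials genuinely go to $0$ in operator norm and not merely weakly — this is where the hypothesis that $\Sigma$ is Thurston-norm minimizing (so that $C_*^{(2)}(X,\Sigma_-)$ is weakly acyclic of determinant class, by Theorem~\ref{thm:reltorexist}) and the product structure from Turaev's algorithm both get used, and it is the place where a naive argument would only give a one-sided bound, which is in fact all we want here.
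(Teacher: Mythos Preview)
Your overall strategy is close to the paper's --- reduce via Turaev's algorithm to the case where $N\doublesms\Sigma$ is connected, set up an explicit CW-structure, express the $L^2$-Alexander torsion of $N$ via Lemma~\ref{lem:calculatetorsion} as a quotient of regular Fuglede--Kadison determinants of matrices polynomial in $t$, and invoke Liu's upper-semicontinuity (Lemma~\ref{lem:uppersemicont}) to bound the limiting determinant by the relative torsion. However, there is a genuine gap at the very first reduction.

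You write that ``$C(N,\phi)=\lim_{t\to\infty}\tautwo(N,\phi)(t)/t^{x_N(\phi)}$ after choosing the representative with no $t^k$-ambiguity''. But there is no a~priori way to select such a representative: a concrete representative $f(t)$, coming from fixed lifts of cells, satisfies $f(t)\sim C(N,\phi)\,t^{k+x_N(\phi)}$ as $t\to\infty$ and $f(t)\sim C(N,\phi)\,t^{k}$ as $t\to 0^+$ for some \emph{unknown} $k\in\R$; Theorem~\ref{thm:previous2}~(3) only gives the \emph{difference} of the two exponents. A single upper-semicontinuity estimate as $t\to\infty$ yields $\limsup_n C(N,\phi)\,n^k\leqslant \tautwo(M,\Sigma_+)<\infty$, hence only $k\leqslant 0$, which by itself says nothing about $C(N,\phi)$. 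The paper therefore runs the upper-semicontinuity argument \emph{twice}, with two different choices of the subsets $J,L$ in Lemma~\ref{lem:calculatetorsion}: once at $t\to 0^+$, where the matrix degenerates to one computing $\tautwo(M,\Sigma_-)$, giving $k\geqslant 0$ together with the implication ``$k=0\Rightarrow C(N,\phi)\leqslant\tautwo(M,\Sigma_-)$''; and once at $t\to\infty$, where a differently reduced matrix degenerates to one computing $\tautwo(M,\Sigma_+)$, giving $k\leqslant 0$. Both halves need the finiteness of the respective relative torsions from Theorem~\ref{thm:reltorexist}. Your sketch sees only one of these halves and so cannot pin down $k$; note also that it is the $t\to 0^+$ limit, not $t\to\infty$, that produces $\tautwo(M,\Sigma_-)$ on the right-hand side.

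A smaller omission: the case of non-empty toroidal boundary is handled separately, by doubling $N$ along $\partial N$ and $\Sigma$ along $\partial\Sigma$, then applying the closed case together with the multiplicativity of $C(\,\cdot\,,\,\cdot\,)$ under toroidal gluing (Theorem~\ref{thm:previous}~(2),(3)) and of the relative torsion under cutting along tori and annuli (Lemma~\ref{lem:tautwo-basics}~(5)).
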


As the reader will notice, this proposition is technically by far the most involved piece of our paper.

\begin{proof}
Let $N$ be a connected irreducible $3$-manifold with empty or toroidal 
boundary and  $\phi \in H^1(N;\Z)$. Furthermore we let $\Sigma$ be a Thurston norm minimizing surface dual to $\phi$.

If $\phi=0$, then $\Sigma$ is empty and it follows from Lemma~\ref{lem:zero-phi} (2) and the definitions that 
$C(N,0) = \tautwo(N) = \tau^{(2)}(N\doublesms \Sigma, \Sigma_-)$.

In the rest of the proof we assume $\phi$ to be non-zero.
We pick a tubular neighborhood $\Sigma \times [-1,1]$ and we write $M=N\sms \Sigma\times (-1,1)$ and $\Sigma_{\pm}:=\Sigma\times \{\pm 1\}$. 
We denote by $p\colon \wti{N}\to N$ the universal covering. For any subset $X\subset N$ we write $\wti{X}:=p^{-1}(X)$.

We split the proof in three parts. The first one deals with the case when $N$ is closed and $M$ connected, and contains the core of the proof. In the second part we prove the desired inequality for any surface in a closed manifold $N$. In the final part we extend the result to $N$ with toroidal boundary by a doubling argument. Note that the proof of Proposition \ref{prop:2nd-inequality} for knot exteriors does not use the second case.

\begin{step1} $N$ is closed and $M=N \doublesms \Sigma$ is connected.
\end{step1} 

We will first assume that $N$ is closed and that $M$ is \emph{connected}. 
We start out with the following claim.

\begin{claim}
We can find a CW-structure for $N$ with the following properties:
\bnm
\item $M=N\sms \Sigma \times (-1,1)$ and $\Sigma\times [-1,1]$ are subcomplexes,
\item the CW-structure on $\Sigma\times [-1,1]$ is a product structure,
\item $M$ has precisely one 3-cell $\beta$,
\item there is exactly one 0-cell $q$ in the interior $M\sms \Sigma\times \{\pm 1\}$,
\item $\Sigma$ has only one 0-cell 
$p_i$
 in each component $\Sigma_i$,
\item for each $i$ there exist 1-cells  $\nu_i^{\pm}$  going from $q$ to $p_i^\pm = p_i \hspace*{-0.1cm}\times\hspace*{-0.1cm}\{\pm 1\}$ lying entirely in $M$.
\enm
\end{claim}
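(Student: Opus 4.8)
The plan is to build the desired CW-structure in stages, starting from an arbitrary triangulation of the pair and then simplifying it cell by cell, being careful to preserve the product structure near $\Sigma$ throughout. First I would fix a triangulation of $N$ in which $M = N \doublesms \Sigma$ and $\Sigma \times [-1,1]$ appear as subcomplexes and in which the induced structure on $\Sigma \times [-1,1]$ is a product triangulation coming from a triangulation of $\Sigma$; this is standard, using that $\Sigma$ is a properly embedded (here, two-sided, closed) surface. At this point properties (1) and (2) hold, but the cell counts in (3)--(6) are far from satisfied. The rest of the argument is a sequence of elementary collapses and expansions supported in the interior of $M$ (away from $\Sigma \times [-1,1]$), which by Lemma~\ref{lem:tautwo-basics}(1) and the triangulation-independence of $L^2$-torsion change neither the homeomorphism type of $N$ nor any of the torsions we care about.

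The key steps, in order, would be as follows. \emph{Reducing to one $0$-cell in $\op{int}(M)$.} Since $M \sms \Sigma \times \{\pm 1\}$ is connected (this is exactly the hypothesis $M$ connected, together with the product collar), I can choose a spanning tree in the $1$-skeleton of $\op{int}(M)$ and collapse it, merging all interior $0$-cells to a single vertex $q$; this uses only edges and vertices lying in the open manifold $M \sms (\Sigma\times\{\pm1\})$, so it does not touch the product structure on the collar. This gives (4). \emph{Arranging the $0$-cells of $\Sigma$.} Independently, I can choose the original triangulation of $\Sigma$ to have exactly one $0$-cell $p_i$ in each component $\Sigma_i$ — equivalently, after collapsing a spanning tree in each component of the $1$-skeleton of $\Sigma$, which by the product structure lifts to a collapse in $\Sigma \times [-1,1]$ — yielding (5), with $p_i^\pm := p_i \times \{\pm 1\}$. \emph{Connecting $q$ to the $p_i^\pm$.} Since $M$ is connected and contains $q$ and all the $p_i^\pm$ in its boundary, I can pick embedded arcs $\nu_i^\pm$ in $M$ from $q$ to $p_i^\pm$; after a further subdivision I may assume these are edges of the triangulation lying in $M$ and meeting $\Sigma \times [-1,1]$ only at their endpoints $p_i^\pm$, giving (6). \emph{Reducing to one $3$-cell.} Finally, since $M$ is a connected $3$-manifold with nonempty boundary, the dual statement — collapsing a spanning tree in the dual graph of the $3$-cells of $M$ — lets me cancel all but one $3$-cell $\beta$ of $M$ (equivalently, $M$ with its triangulation collapses onto a $2$-complex with a single $3$-handle attached; here one uses that $\partial M \ne \emptyset$ so $M$ has no "closed" obstruction). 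This gives (3). Throughout, each move is either an elementary expansion/collapse or a subdivision, all performed in the interior of $M$, so properties (1) and (2) persist.

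I would organize the write-up so that the moves affecting $\Sigma$ (steps for (5), and the product structure in (2)) are done first, at the level of the triangulation of $\Sigma$ itself, and only afterwards perform the interior simplifications of $M$ for (3), (4), (6); this keeps the bookkeeping clean and makes it manifest that (1) and (2) are never disturbed. It is worth remarking explicitly that the point of insisting "$M$ connected" in this Step~1 is precisely what makes the spanning-tree collapse yielding (4) — and the choice of the connecting arcs $\nu_i^\pm$ in (6) — possible with a single interior vertex $q$; in the general case (treated later in the proof) one first applies Turaev's algorithm, Proposition~\ref{prop:turaevsalgorithm}, to reduce to the situation where all but one component of $N\doublesms S$ is a product, so that only one non-product piece needs such a CW-structure.

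The main obstacle I anticipate is not any single step but the simultaneous bookkeeping: one must verify that the collapses used to achieve (3) and (4) can be chosen disjoint from the collar $\Sigma\times[-1,1]$ and from each other, and that the arcs $\nu_i^\pm$ can be taken to be genuine $1$-cells of the final CW-structure meeting $\Sigma\times\{\pm1\}$ only at $p_i^\pm$. None of this is deep, but it requires enough care that a clean "collapse in the open manifold $M\sms(\Sigma\times\{\pm1\})$" framing is essential; the substantive topological input — that such collapses exist — is just the standard fact that a compact connected manifold with nonempty boundary collapses to a spine of codimension at least one, combined with its dual for the $3$-cells.
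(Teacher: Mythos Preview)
Your proposal is correct and follows essentially the same approach as the paper's proof, which explicitly credits McMullen \cite[Proof of Theorem~5.1]{McM02}: start from a triangulation respecting the product collar, fuse the $3$-cells along a dual maximal tree for (3), collapse a maximal tree in the interior $1$-skeleton for (4), and collapse maximal trees $T_i\subset\Sigma_i$ (together with $T_i\times[-1,1]$) for (5).

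The one place where the paper is slightly cleaner than your write-up is property~(6). You propose to pick embedded arcs $\nu_i^\pm$ and then subdivide to make them $1$-cells; but a subdivision of an arc introduces new interior $0$-cells, which would undo (4), and you would then need a further collapse-and-repeat argument. The paper avoids this entirely: before the interior tree collapse, the triangulation of $M$ already contains edges joining interior vertices to each boundary vertex (any boundary vertex lies in a $3$-simplex with at least one interior vertex), and after collapsing the interior tree to $q$ these become $1$-cells from $q$ to the boundary vertices. Thus (6) comes for free from the same collapse that yields (4), with no extra arcs or subdivisions needed. Apart from this bookkeeping point and the order of the steps, your argument and the paper's coincide.
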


We sketch the proof of the claim.
We pick a triangulation for $M=N\sms \Sigma \times (-1,1)$. Since all triangulations on surfaces are equivalent after isotopies and subdivisions we can find a triangulation for $M$ such that the triangulations on the two copies $\Sigma\times \{\pm 1\}$ in $M$ agree. We use this triangulation to view $M$ as a CW-complex and we also equip $\Sigma$ as a CW-complex coming from the triangulation. Next we modify the CW-structure to also obtain properties (3), (4), (5) and (6). We do so following an argument of McMullen, see \cite[Proof of Theorem 5.1]{McM02}:
\bnm
\item[(a)] First fuse all the $3$-cells of $M$ along a dual maximal tree to achieve (3).
\item[(b)] We pick a maximal tree in the 1-skeleton on $M$ with the following properties:
\bnm
\item[(i)] the tree  connects all vertices in $M\sms \Sigma_{\pm}$,
\item[(ii)] the tree lies in $M\sms \Sigma_{\pm }$.
\enm
We collapse this tree to a single point $q$. Since any embedded tree in a 3-manifold has a neighborhood that is a ball we see that the collapsed space is again homeomorphic to $M$. But now we have a CW-structure that also satisfies (4) and (6). 
\item[(c)] Finally for each component $\Sigma_i$ of $\Sigma$ we pick a maximal tree $T_i$ in the 1-skeleton of $\Sigma_i$ that connects all vertices. We collapse $T_i\times [-1,1]$. Once again the quotient space is homeomorphic to $M$ and this time we have a CW-structure that has all the desired properties.
\enm
This concludes the proof of the claim.

Next we choose names for the cells of $\Sigma = \Sigma_1 \sqcup \ldots \sqcup \Sigma_l$. More precisely, we denote by $p_i$ the 0-cell of $\Sigma_i$, we write $\mathcal{P}=\{p_i\}$, we denote by $\mathcal{E}=\{e_i\}$ the set of 1-cells and we denote by $\mathcal{F}=\{f_i\}$ the set of 2-cells of $\Sigma$. For clarity, we pick an order on $\mathcal{E}$ (resp. $\mathcal{F}$) so that the cells of $\Sigma_1$ come first, then those of $\Sigma_2$, etc.
We write $I=[-1,1]$. We equip $\Sigma\times I$ with the product CW-structure with cells $p_i^\pm,e_i^\pm,f_i^\pm$ on $\Sigma\times \{\pm 1\}$ and the product cells $p_i\times I,e_i\times I, f_i\times I$ where $i$ runs over the obvious index sets.

The CW-structure on $M$ has $2l+1$ 0-cells, namely $q$ and the $p_i^{\pm}$. We have $1$-cells $\nu_i^\pm$. Let  $\mathcal{M}=\{\mu_i\}$ (resp.\ $\mathcal{S}=\{\sigma_i\}$) be the set of the other 1-cells (resp.\ the set of 2-cells) in the interior of $M$. As a base point we use $q$ and abbreviate $\Lambda:=\Z[\pi_1(N,q)]$.
We denote by $\gamma_i$ the element in $\pi_1(N,q)$ induced by the path given by concatenating $\nu_i^-$, $p_i\times I$ and $(\nu_i^+)^{-1}$ in this order. Note that with our orientation conventions we have $\phi(\gamma_i)=1$ for all $i=1, \ldots, l$.
In the following we use the above cell decomposition and we pick $q$ as our base point.  We connect each cell of $M$ with a path in $M$ to the base point. 
Furthermore we pick paths in $\Sigma_i\times [-1,1)$ together with $\nu_i^{-}$ 
to connect the cells in $\Sigma_i\times [-1,1)$ to the base point $q$. Finally we pick paths in $\Sigma_i\times \{1\}$ and $\nu_i^+$ to connect the cells in $\Sigma_i\times \{1\}$ to the base point $q$ (see Figure \ref{fig:base-points-paths}).  These choices of  paths correspond to choices of lifts of the cells to the universal covering of $N$. We use these lifts as the basis of the cellular chain complex $C_*\left (\wti{N}\right )$, viewed as a free left $\Z[\pi_1(N,q)]$-module. 
\begin{align*}
C_3\left (\wti{N}\right )&=
\Lambda\cdot \widetilde{\beta} 
\oplus 
\left(\,\tmoplus{i=1}{|\mathcal{F}|}\Lambda\cdot \widetilde{f_i\times I} \right)
,\\
C_2\left (\wti{N}\right )&=
\left (\,\tmoplus{i=1}{|\mathcal{S}|}\Lambda\cdot \widetilde{\sigma_i} \right )
\oplus
\left (\,\tmoplus{i=1}{|\mathcal{F}|}\Lambda\cdot \widetilde{f^+_i} \right )
\oplus
\left (\,\tmoplus{i=1}{|\mathcal{F}|}\Lambda\cdot \widetilde{f^-_i} \right )
\oplus 
\left (\,\tmoplus{i=1}{|\mathcal{E}|}\Lambda\cdot \widetilde{e_i\times I} \right )
, \\ 
C_1\left (\wti{N}\right )&=
\left (\,\tmoplus{i=1}{l}\Lambda\cdot \widetilde{p_i \times I} \right )
\oplus
\left (\,\tmoplus{i=1}{l}\Lambda\cdot \widetilde{\nu_i^+} \right )
\oplus
\left (\,\tmoplus{i=1}{l}\Lambda\cdot \widetilde{\nu_i^-} \right ) \\
& \hspace*{0.5cm}
\oplus
\left (\,\tmoplus{i=1}{|\mathcal{M}|}\Lambda\cdot \widetilde{\mu_i} \right )
\oplus
\left (\,\tmoplus{i=1}{|\mathcal{E}|}\Lambda\cdot \widetilde{e^+_i} \right )
\oplus 
\left (\,\tmoplus{i=1}{|\mathcal{E}|}\Lambda\cdot \widetilde{e^-_i} \right )
, \\ 
C_0\left (\wti{N}\right )&=
\left (\,\tmoplus{i=1}{l}\Lambda\cdot \widetilde{p_i^+} \right )
\oplus
\left (\,\tmoplus{i=1}{l}\Lambda\cdot \widetilde{p_i^-} \right )
\oplus
\Lambda\cdot \widetilde{q} ,
\end{align*}
\begin{figure}[h]
\begin{center}
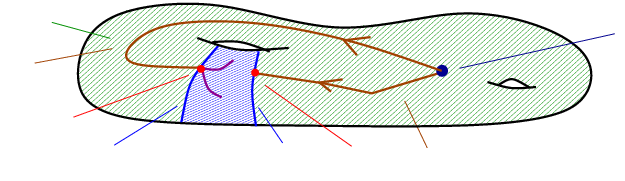\caption{The cell decomposition of $M$}
\label{fig:base-points-paths}
\end{center}
\end{figure}
Recall that the boundary map in the cellular chain complex is given in the usual way, except that the coefficient in $\Z[\pi_1(N,q)]$ corresponds to the concatenation of the preferred path in the original cell and the concatenation of the preferred path of the cell in the boundary.
It is now fairly straightforward to see that the boundary maps are given as follows (the notation  used in the matrices will be explained just below the matrices):

\renewcommand{\kbldelim}{(}
\renewcommand{\kbrdelim}{)}

\begin{align*}
\partial_3^N:=
\kbordermatrix{\mbox{}& \widetilde{\mathcal{S}} & \widetilde{\mathcal{F}^+} & \widetilde{\mathcal{F}^-} & \widetilde{\mathcal{E}\times I} \\
\widetilde{\beta} & A & -1 & 1 & 0\\
\widetilde{\mathcal{F}\times I} & 0 & d(\gamma_i) & -\id & \partial_{\Sigma}^2\times I},
 \\
\partial_2^N:=
\kbordermatrix{\mbox{}& \widetilde{p_i \times I} & \widetilde{\nu_i^+} & \widetilde{\nu_i^-} & \widetilde{\mathcal{M}} & \widetilde{\mathcal{E}^+} & \widetilde{\mathcal{E}^-} \\
\widetilde{\mathcal{S}} & 0 & B^+ & B^- & C & D^+ & D^-\\
\widetilde{\mathcal{F}^+} & 0 & 0 & 0 & 0 & \partial_{\Sigma,+}^2 & 0\\
\widetilde{\mathcal{F}^-} & 0 & 0 & 0 & 0 & 0 & \partial_{\Sigma,-}^2\\
\widetilde{\mathcal{E}\times I} & \partial_{\Sigma}^1\times I & 0 & 0 & 0 & -d(\gamma_i) & \id },
\\
\partial_1^N:=
\kbordermatrix{\mbox{}&\widetilde{p_i^+} & \widetilde{p_i^-} & \widetilde{q} \\
\widetilde{p_i \times I} & d(\gamma_i) & -\id & 0\\
\widetilde{\nu_i^+} & \id & 0 & -1\\
\widetilde{\nu_i^-} & 0 & \id & -1\\
\widetilde{\mathcal{M}} & E^+ & E^- & F\\
\widetilde{\mathcal{E}^+} & \partial_{\Sigma,+}^1 & 0 & 0\\
\widetilde{\mathcal{E}^-} & 0 & \partial_{\Sigma,-}^1 & 0}
.
\end{align*}
It is perhaps worth recalling that we think of elements in $\Lambda^k$ as row vectors and that we multiply by matrices from the right. 
Also let us clarify some notations. For each $i=1, \ldots, l$, the pointed topological spaces $(\Sigma_i^+,p_i^+), (\Sigma_i^-,p_i^-), (\Sigma_i \times I,p_i^-)$ are naturally homotopy equivalent.
We use these natural homotopy equivalences to identify their fundamental groups and we denote this common fundamental group by $\pi_1(\Sigma_i)$. Now for each $j=0,1,2$ the symbols $\partial_{\Sigma+}^j$ and $\partial_{\Sigma,-}^j$ denote the boundary operators in the cellular chain complexes of the universal covers and are written as the same block diagonal matrix with blocks over $\Z[\pi_1(\Sigma_1)], \ldots, \Z[\pi_1(\Sigma_l)]$. Since $\Sigma_i$ is Thurston-norm minimizing
we know from Proposition~\ref{prop:everything-pi1-injective} that the  inclusion of each component of $\Sigma_i$ into $M$ and $N$ is $\pi_1$-injective, and thus by a slight abuse of notation we denote again by $\partial_{\Sigma+}^j$ and $\partial_{\Sigma,-}^j$ their inductions over $\Lambda$ through $\pi_1(\Sigma_i) \hookrightarrow \pi_1(N,p_+)$. Similarly, $A,B^+,B^-,C,D^+,D^-,E^+, E^-,F$ are matrices over $\Z[\pi_1(M,q)] \subset \Lambda$ that represent pieces of boundary operators of $C_*\left (\wti{M}\right )$ (the inclusion $\pi_1(M,q) \hookrightarrow \pi_1(N,q)$ comes from Proposition \ref{prop:everything-pi1-injective}). Moreover, a $0,1$ or $-1$ in a box means all coefficients of the box are equal to this number. Finally $d(\gamma_i)$ means the diagonal matrix with 
an appropriate number of $\gamma_i$-entries on the diagonal, where the number of $\gamma_i$-entries is determined by the number of cells of $\mathcal{P}$, $\mathcal{E}$ or $\mathcal{F}$ that lie in $\Sigma_i$.

By construction we have $\phi(\gamma_i)=1$ for all $i$ and $\pi_1(M,q)\subset \ker \phi$. Hence the boundary matrices $\partial^{N,(2)}_j(t)$ ($j=1,2,3$) for the $L^2$-chain complex $C_*^{(2)}(N,\phi,t)$ are given by right multiplications by the matrices $\partial^N_j$ where the $d(\gamma_i)$ are simply changed to $d(t \cdot \gamma_i)$.

Using the same CW-structure for the pair $(M,\Sigma_-)$ and denoting $\Lambda' := \Z[\pi_1(M,q)]$ we obtain for the cellular chain complex 
$C_*\left (\wti{M},\wti{\Sigma_-}\right )$
 the decomposition 
\begin{align*}
C_3\left (\wti{M},\wti{\Sigma_-}\right )&=
\Lambda'\cdot \widetilde{\beta} 
,\\
C_2\left (\wti{M},\wti{\Sigma_-}\right )&=
\left (\,\tmoplus{i=1}{|\mathcal{S}|}\Lambda'\cdot \widetilde{\sigma_i} \right )
\oplus
\left (\,\tmoplus{i=1}{|\mathcal{F}|}\Lambda'\cdot \widetilde{f^+_i} \right )
, \\ 
C_1\left (\wti{M},\wti{\Sigma_-}\right )&=
\left (\,\tmoplus{i=1}{l}\Lambda'\cdot \widetilde{\nu_i^+} \right )
\oplus
\left (\,\tmoplus{i=1}{l}\Lambda'\cdot \widetilde{\nu_i^-} \right )
\oplus
\left (\,\tmoplus{i=1}{|\mathcal{M}|}\Lambda'\cdot \widetilde{\mu_i} \right )
\oplus
\left (\,\tmoplus{i=1}{|\mathcal{E}|}\Lambda'\cdot \widetilde{e^+_i} \right )
, \\ 
C_0\left (\wti{M},\wti{\Sigma_-}\right )&=
\left (\,\tmoplus{i=1}{l}\Lambda'\cdot \widetilde{p_i^+} \right )
\oplus
\Lambda'\cdot \widetilde{q},
\end{align*}
and with this direct sum decomposition the boundary matrices are given by $$
\partial_3^M:=
\kbordermatrix{\mbox{}& \widetilde{\mathcal{S}} & \widetilde{\mathcal{F}^+} \\
\widetilde{\beta} & A & -1 }
,
 \
\partial_2^M:=
\kbordermatrix{\mbox{}& \widetilde{\nu_i^+} & \widetilde{\nu_i^-} & \widetilde{\mathcal{M}} & \widetilde{\mathcal{E}^+} \\
\widetilde{\mathcal{S}} & B^+ & B^- & C & D^+ \\
\widetilde{\mathcal{F}^+} & 0 & 0 & 0 & \partial_{\Sigma,+}^2 
},\
\partial_1^M:=
\kbordermatrix{\mbox{}&\widetilde{p_i^+} & \widetilde{q}\\
\widetilde{\nu_i^+} & \id & -1 \\
\widetilde{\nu_i^-} & 0 & -1 \\
\widetilde{\mathcal{M}} & E^+ & F \\
\widetilde{\mathcal{E}^+} & \partial_{\Sigma,+}^1 & 0
}.
$$

Similarly as before, the boundary matrices $\partial^{M,(2)}_j$ ($j=1,2,3$) for the $L^2$-chain complex $C_*^{(2)}(M,\Sigma_-)$ are given by right multiplications by the matrices $\partial^M_j$.

We write $G'=\pi_1(M,p_+)$. We apply Lemma~\ref{lem:calculatetorsion} to the chain complex
\[ C_*^{(2)}(M,\Sigma_-)\quad \mbox{ with } J\,=\, \big(\widetilde{f_{|\mathcal{F}|}^+}\big)
\mbox{ and } L\,=\,\big(\widetilde{\nu_1^+},\ldots,\widetilde{\nu_l^+},\widetilde{\nu_1^-} \big).\]
Together with  Proposition \ref{prop:op_det} we obtain that
\begin{align*}
\tautwo(M,\Sigma_-) = \tautwo\left (C_*^{(2)}(M,\Sigma_-)\right )
&=
\dfrac{{\det}_{G'}^r\begin{pmatrix}
B_{2,\ldots,l}^- & C & D^+ \\
0 & 0 & \partial^+
\end{pmatrix} }
{ {\det}_{G'}^r\begin{pmatrix}
-1 \end{pmatrix} {\det}_{G'}^r\begin{pmatrix}
\id & -1 \\
0 & -1
\end{pmatrix} }
={\det}_{G'}^r\begin{pmatrix}
B_{2,\ldots,l}^- & C & D^+ \\
0 & 0 & \partial^+
\end{pmatrix}
\hspace*{-0.1cm},
\end{align*}
where $\partial^+$ is $\partial_{\Sigma,+}^2 $ without its last row and $B_{2,\ldots,l}^-$ is $B^-$ without its first column.

Similarly, we consider the pair $(M,\Sigma_+)$ and we compute
\[
\tautwo(M,\Sigma_+) = {\det}_{G'}^r\begin{pmatrix}
B_{1,\ldots,l-1}^+ & C & D^- \\
0 & 0 & \partial^-
\end{pmatrix}
\]
where $\partial^-$ is $\partial_{\Sigma,-}^2$ without its first row.

We write $G=\pi_1(N,p_+)$.
Let 
\[ f(t)\,:=\,\tautwo\left (C_*^{(2)}(N,\phi,t)\right )\]
denote the particular representative of the $L^2$-Alexander torsion $\tautwo(N,\phi,t)$  computed from the previous choices of cells and lifts.

Next we apply Lemma~\ref{lem:calculatetorsion} to the chain complex
\[ C_*^{(2)}(N,\phi,t)\mbox{ with }J\,=\,\big\{\widetilde{f_{|\mathcal{F}|}^+},\widetilde{\mathcal{F}^-}\big\}\mbox{ and }
L\,=\, \big(\widetilde{p_1\times I},\ldots,\widetilde{p_l\times I},\widetilde{\nu_1^+},\ldots,\widetilde{\nu_l^+},\widetilde{\nu_1^-} \big).\]
Together with Proposition \ref{prop:op_det} we obtain that 
\begin{align*}
f(t)
&=
\dfrac{
{\det}_G^r\begin{pmatrix}
B_{2,\ldots,l}^- & C & D^+ & D^- \\
0 & 0 & \partial^+ & 0 \\
0 & 0 & - d(t \cdot \gamma_i) & \id
\end{pmatrix}
}{
{\det}_G^r\begin{pmatrix}
-1 & 1 & \ldots & 1 \\
0 & \\
\vdots & \\ 
0 & \ & -\id \\
t\cdot \gamma_l
\end{pmatrix}
{\det}_G^r\begin{pmatrix}
d(t \cdot \gamma_i) & -\id & 0 \\
\id & 0 & -1\\
0 & 1 \ 0 \ldots 0 & -1
\end{pmatrix}
}
\\
&=\lmfrac{1}{\max\left\{1,t\right\}^2} \cdot{\det}_G^r\begin{pmatrix}
B_{2,\ldots,l}^- & C & D^+ & D^- \\
0 & 0 & \partial^+ & 0 \\
0 & 0 & - d(t\cdot \gamma_i) & \id
\end{pmatrix}
= \dfrac{g(t)}{\max\{1,t\}^2},
\end{align*}
where $g(t) := {\det}_G^r\begin{pmatrix}
B_{2,\ldots,l}^- & C & D^+ & D^- \\
0 & 0 & \partial^+ & 0 \\
0 & 0 & - d(t\cdot \gamma_i) & \id
\end{pmatrix}$.

We know from Theorem~\ref{thm:previous2} (4) that there exists a $k \in \R$ such that $f(t) \sim_{t \to 0^+} C(N,\phi) t^k$ and $f(t) \sim_{t \to \infty} C(N,\phi) \cdot t^{k+x_N(\phi)}$. In the remainder of Step 1 we will prove that $k=0$ and that $C(N,\phi) \leqslant \tautwo(M,\Sigma_-)$. More precisely, we will prove successively the following statements:
\bnm
\item[(a)] the inequality $k \geqslant 0$, 
\item[(b)] we prove that $k=0$ implies $C\left(N,{\phi}\right)\leqslant  \tautwo(M,\Sigma_-)$,
\item[(c)] the inequality $k \leqslant 0$.
\enm

First we prove (a) and (b). To do this, we take $t=1/n$ in $f(t) \sim_{t \to 0^+} C(N,\phi)t^k$, and we obtain
$$\dfrac{C(N,\phi)}{n^k} \sim_{n \to \infty} f\left (\tmfrac{1}{n}\right ) = g\left (\tmfrac{1}{n}\right ) = 
{\det}_G^r\begin{pmatrix}
B_{2,\ldots,l}^- & C & D^+ & D^- \\
0 & 0 & \partial^+ & 0 \\
0 & 0 & -d\left (\frac{1}{n} \cdot \gamma_i\right ) & \id
\end{pmatrix}.$$
Since from Lemma~\ref{lem:uppersemicont} and Proposition \ref{prop:op_det} we have that
$$\limsup_{n \to \infty} g\left (\tmfrac{1}{n}\right ) \leqslant 
{\det}_G^r\begin{pmatrix}
B_{2,\ldots,l}^- & C & D^+ & D^- \\
0 & 0 & \partial^+ & 0 \\
0 & 0 & 0 & \id
\end{pmatrix}
 = 
{\det}_{G'}^r\begin{pmatrix}
B_{2,\ldots,l}^- & C & D^+ \\
0 & 0 & \partial^+ 
\end{pmatrix} = \tautwo(M,\Sigma_-), $$
then the previous asymptotics imply that 
$\limsup_{n \to \infty} \dfrac{C(N,\phi)}{n^k} \leqslant \tautwo(M,\Sigma_-)$. Since $C(N,\phi)>0$ and $\tautwo(M,\Sigma_-)<+\infty$, it follows that $k\geqslant 0$. 

Furthermore the same inequality $\limsup_{n \to \infty} \dfrac{C(N,\phi)}{n^k} \leqslant \tautwo(M,\Sigma_-)$ shows that if $k=0$, then we get the desired inequality $C(N,\phi) \leqslant \tautwo(M,\Sigma_-)$. 

Finally we prove (c), which concludes the proof of Step 1.
We apply Lemma~\ref{lem:calculatetorsion}  to 
\[ C_*^{(2)}(N,\phi,t)\mbox{ but now with }J=
\big(\widetilde{\mathcal{F}^+},\widetilde{f_{1}^-}\big)\mbox{ and }L=\big(\widetilde{p_1\times I},\ldots,\widetilde{p_l\times I},\widetilde{\nu_l^+},\widetilde{\nu_1^-},\ldots,\widetilde{\nu_l^-} \big).\]
Together with  Proposition \ref{prop:op_det} we obtain that
\begin{align*}
f(t)
&=
\dfrac
{
{\det}_G^r\begin{pmatrix}
B_{1,\ldots,l-1}^+ & C & D^+ & D^- \\
0 & 0 & 0 & \partial^- \\
0 & 0 & -d(t \cdot \gamma_i) & \id
\end{pmatrix}
}
{
{\det}_G^r\begin{pmatrix}
d(t \cdot \gamma_i) & -\id & 0 \\
0 \ldots 0 \ 1 & 0 & -1 \\
0 & \id & -1
\end{pmatrix}
\
{\det}_G^r\begin{pmatrix}
-1  \ldots  -1 & 1 \\
 & -1 \\
 & 0 \\
d( t\cdot \gamma_i)  & \vdots \\ 
 & 0 \\
\end{pmatrix}
}
 \\
 &=
\dfrac{1}{t^{|\mathcal{F}|-1}\max\{1,t\}}
\cdot 
{\det}_G^r\begin{pmatrix}
B_{1,\ldots,l-1}^+ & C & D^+ & D^- \\
0 & 0 & 0 & \partial^- \\
0 & 0 & -t \cdot d(\gamma_i) & \id
\end{pmatrix}
\cdot 
\dfrac{1}{t^{l-1}\max\{1,t\}}
 \\
 &=
\dfrac{1}{t^{l+|\mathcal{F}|-2}\max\{1,t\}^2}
\cdot t^{|\mathcal{E}|} \cdot 
{\det}_G^r\begin{pmatrix}
B_{1,\ldots,l-1}^+ & C & D^+ & D^- \\
0 & 0 & 0 & \partial^- \\
0 & 0 & - d(\gamma_i) & t^{-1}\id
\end{pmatrix}\\
 &=
\dfrac{t^{-\chi(\Sigma)}}{t^{-2}\max\{1,t\}^2}
\cdot 
{\det}_G^r\begin{pmatrix}
B_{1,\ldots,l-1}^+ & C & D^+ & D^- \\
0 & 0 & 0 & \partial^- \\
0 & 0 & - d(\gamma_i) & t^{-1}\id
\end{pmatrix}
=
\dfrac{t^{x_N(\phi)}\cdot h(t)}{t^{-2}\max\{1,t\}^2}
,
\end{align*}
where $h(t) := {\det}_G^r\begin{pmatrix}
B_{1,\ldots,l-1}^+ & C & D^+ & D^- \\
0 & 0 & 0 & \partial^- \\
0 & 0 & - d(\gamma_i) & t^{-1}\id
\end{pmatrix}$. Here note that the final two equalities come from the fact that
$ -|\mathcal{F}|+|\mathcal{E}|-l= - \chi(\Sigma) = x_N(\phi)
$.

Since $C(N,\phi)t^{k+x_N(\phi)} \sim_{t \to \infty} f(t)
\sim_{t \to \infty} t^{x_N(\phi)} h(t)$, then $C(N,\phi)t^{k} \sim_{t \to \infty} h(t)$.

As before, by taking $t=n$ and using Lemma~\ref{lem:uppersemicont} and Proposition \ref{prop:op_det} we conclude that
$$\limsup_{n \to \infty} h(n) \leqslant 
{\det}_G^r
\hspace*{-0.1cm}
\begin{pmatrix}
B_{1,\ldots,l-1}^+ & C & D^+ & D^- \\
0 & 0 & 0 & \partial^- \\
0 & 0 & - d(\gamma_i) & 0
\end{pmatrix}
\hspace*{-0.1cm}
 = 
{\det}_{G'}^r
\hspace*{-0.1cm}
\begin{pmatrix}
B_{1,\ldots,l-1}^+ & C  & D^- \\
0 & 0  & \partial^- 
\end{pmatrix} 
\hspace*{-0.1cm}
= \tautwo(M,\Sigma_+), $$
 thus  $  \limsup_{n \to \infty} C(N,\phi)n^{k} = \limsup_{n \to \infty} h(n) \leqslant \tautwo(M,\Sigma_+)$. Since $C(N,\phi)>0$ and $\tautwo(M,\Sigma_+)<+\infty$, it follows that $k\leqslant 0$, which concludes the proof of Step 1.

\begin{step2}
Surfaces in a closed $N$.
\end{step2}

We let $N$ be a closed 3-manifold and as before suppose we are given $\phi \in H^1(N;\Z)$ and  a Thurston norm minimizing surface $\Sigma$  dual to $\phi$. Now we no longer assume that 
$N\doublesms\Sigma$ is connected. By Proposition \ref{prop:turaevsalgorithm}
there exists  a weighted surface $\what{S}= \{(S_i, w_i)\}_{i=1, \ldots, l}$ in $N$  with
\begin{enumerate}[font=\normalfont]
		\item the class  $\ttmsum{i=1}{l} w_i\cdot [S_i]\in H_2(N)$ is dual to $\phi$,
		\item we have $-\ttmsum{i=1}{l} w_i\cdot \chi(S_i)=x_N(\phi)$,
			\item if we set $S'=\cup S_i$, then $N\doublesms S'$ is connected,
		\item $\tautwo(N\doublesms S', S'_-)=\tautwo(N\doublesms\Sigma, \Sigma_-)$.
	\end{enumerate}
Thus it remains to prove the following claim.

\begin{claim}
We have
$C\left (N,\phi\right )\leqslant \tautwo(N\doublesms S', S'_-).$
\end{claim}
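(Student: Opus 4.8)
The plan is to rerun the computation of Step~1 with the surface $S'$ in place of $\Sigma$, carrying the weights $w_i$ in the twisting. By Lemma~\ref{lem:killingproductpieces}, applied to the surface $S$ (which is Thurston norm minimizing by Proposition~\ref{prop:turaevsalgorithm}(2)), the surface $S'$ is itself Thurston norm minimizing, for its own class $\sum_i[S_i]$, and by Property~(3) the manifold $M:=N\doublesms S'$ is connected; hence the CW-structure of the Claim in Step~1 is available for the pair $(N,S')$, with one $3$-cell and one interior $0$-cell $q$ in $M$, one $0$-cell $p_i$ in each component $S_i$, arcs $\nu_i^{\pm}$ from $q$ to $p_i^{\pm}$, and loops $\gamma_i$ as before. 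The only difference is the cohomology class: now $\op{PD}(\phi)=\sum_i w_i[S_i]$, so the meridian $\gamma_i$ of $S_i$ satisfies $\phi(\gamma_i)=w_i$, while $\phi$ still vanishes on $\pi_1(M,q)$ and on each $\pi_1(S_i)$ (both of which inject into $\pi_1(N)$ by Proposition~\ref{prop:everything-pi1-injective}). Consequently the $L^2$-boundary matrices of $C_*^{(2)}(N,\phi,t)$ are exactly those written down in Step~1, except that every block $d(t\cdot\gamma_i)$ is replaced by $d(t^{w_i}\cdot\gamma_i)$; all other blocks $A,B^{\pm},C,D^{\pm},E^{\pm},F,\partial^j_{\Sigma,\pm}$ are unchanged.

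With this single substitution, Step~1 goes through. Applying Lemma~\ref{lem:calculatetorsion} with the same subsets $J,L$ as in the first part of Step~1 produces a representative $f(t)$ of $\tautwo(N,\phi)$ equal to $g(t)$ divided by a power of $\max\{1,t\}$ (using Proposition~\ref{prop:op_det}(6) in the form ${\det}^r_{G}(1-t^{w_i}\gamma_i)=\max\{1,t\}^{w_i}$), where $g(t)$ is the regular Fuglede--Kadison determinant of the Step~1 matrix with $-d(t\cdot\gamma_i)$ replaced by $-d(t^{w_i}\cdot\gamma_i)$. For $t\to 0^+$ the $\max\{1,t\}$-factor is trivial and $d\big((1/n)^{w_i}\gamma_i\big)\to 0$, so Lemma~\ref{lem:uppersemicont} together with Proposition~\ref{prop:op_det} gives $\limsup_{n\to\infty} g(1/n)\leqslant\tautwo(M,S'_-)=\tautwo(N\doublesms S',S'_-)$; writing $f(t)\sim_{t\to 0^+}C(N,\phi)\,t^{k}$ (Theorem~\ref{thm:previous2}(4)) and using $C(N,\phi)>0$, this forces $k\geqslant 0$ and, in the case $k=0$, the desired inequality $C(N,\phi)\leqslant\tautwo(N\doublesms S',S'_-)$.

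It remains to rule out $k<0$, for which I would run the $t\to\infty$ half of Step~1 with the same substitution: the manipulations there produce $f(t)\sim_{t\to\infty}t^{e}h(t)$, where $h(t)$ is a regular Fuglede--Kadison determinant with $\limsup_{n\to\infty}h(n)\leqslant\tautwo(M,S'_+)<\infty$ (again by Lemma~\ref{lem:uppersemicont}), and the exponent $e$ equals the weighted cell count $\sum_i w_i\big(|\mathcal E_i|-|\mathcal F_i|-1\big)=-\sum_i w_i\chi(S_i)$, which by Property~(2) of $\what{S}$ is exactly $x_N(\phi)$. Comparing with $f(t)\sim_{t\to\infty}C(N,\phi)t^{k+x_N(\phi)}$ (Theorem~\ref{thm:previous2}(4)) gives $h(t)\sim_{t\to\infty}C(N,\phi)t^{k}$, so boundedness of $h(n)$ and $C(N,\phi)>0$ force $k\leqslant 0$; hence $k=0$ and the Claim follows. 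The one technically delicate point, just as in Step~1, is this last exponent bookkeeping: one must verify that the weighted meridian blocks $d(t^{w_i}\gamma_i)$ and the associated determinants $\max\{1,t\}^{w_i}$ conspire so that the leading power of $t$ as $t\to\infty$ is precisely the weighted Euler characteristic $-\sum_i w_i\chi(S_i)$ — and this is exactly where Property~(2) of the Turaev weighted surface is used; everything else is a mechanical transcription of Step~1 with $1$ replaced by $w_i$ in the exponent of $t$ attached to each $\gamma_i$.
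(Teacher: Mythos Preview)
Your proposal is correct and follows essentially the same argument as the paper's Step~2: rerun Step~1 with $S'$ replacing $\Sigma$, noting that $\phi(\gamma_i)=w_i$ so that every $d(t\cdot\gamma_i)$ becomes $d(t^{w_i}\cdot\gamma_i)$, and then establish $k\geqslant 0$, $k=0\Rightarrow$ inequality, and $k\leqslant 0$ exactly as before. The paper carries out the weighted exponent bookkeeping you flag in detail, introducing $|\mathcal P|_w,|\mathcal E|_w,|\mathcal F|_w$ and checking $x_N(\phi)=-|\mathcal F|_w+|\mathcal E|_w-|\mathcal P|_w$, which is precisely your $\sum_i w_i(|\mathcal E_i|-|\mathcal F_i|-1)$.
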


The proof of the claim is basically the same as the proof of Step 1, except that now, perhaps somewhat confusingly,  $S'$ plays the role of the surface $\Sigma$ of Step 1. 

To calculate $\tautwo\left (N,\phi\right )(t)$, we use the same type of  CW-complex structure for $N$ as in Step 1, starting from the decomposition $N=(N\sms S'\times [-1,1])\cup (S'\times [-1,1])$. The difference is  that now we have  $\phi(\gamma_i)=w_i$ for all $i$, and $w_i$ is not necessarily equal to $1$. Then we obtain for the torsion $\tautwo\left (N,\phi\right )(t)$ a representative $f(t)$ of the form
\begin{align*}
f(t)
&=
\dfrac{
{\det}_G^r\begin{pmatrix}
B_{2,\ldots,l}^- & C & D^+ & D^- \\
0 & 0 & \partial^+ & 0 \\
0 & 0 & - d(t^{w_i}\gamma_i) & \id
\end{pmatrix}
}{
{\det}_G^r\begin{pmatrix}
 d(t^{w_i} \gamma_i) & -\id & 0 \\
\id & 0 & -1\\
0 & 1 \ 0 \ldots 0 & -1
\end{pmatrix}
\
{\det}_G^r\begin{pmatrix}
-1 & 1 & \ldots & 1 \\
0 & \\
\vdots & \\ 
0 & \ & -\id \\
t^{w_l}\cdot \gamma_l
\end{pmatrix}
}
\\
&=\lmfrac{1}{\max\left\{1,t\right\}^{w_1+w_l}} \cdot{\det}_G^r\begin{pmatrix}
B_{2,\ldots,l}^- & C & D^+ & D^- \\
0 & 0 & \partial^+ & 0 \\
0 & 0 & -d(t^{w_i}\gamma_i) & \id
\end{pmatrix}
=: \dfrac{g(t)}{\max\{1,t\}^{w_1+w_l}}
.
\end{align*}
Here, analogously to the notation in Step 1 we denote by $d(t^{w_i}\gamma_i)$ the diagonal matrix with an appropriate number of terms $t^{w_i}\gamma_i$ on the diagonal.
Recall that there exists $k \in \R$ such that $f(t) \sim_{t \to 
0^+} C\left (N,\phi\right ) \cdot t^k$ and 
$f(t) \sim_{t \to \infty} C\left (N,\phi\right 
) \cdot t^{k+x_N\left (\phi\right )}$. Like in Step 1 
we will prove
in succession the following three statements:
\bnm
\item[(a)] we have $k \geqslant 0$,
\item[(b)] $k=0$ implies that $ C\left (N,\phi\right )\leqslant \tautwo(N\doublesms S', S'_-) $,
\item[(c)] we have $k \leqslant 0$.
\enm
The proofs of statements (a), (b) and (c)  are very similar to the proofs of the corresponding statements in Step 1. We just need to carry the weights along and we need to verify that nothing goes awry.

First we prove (a). 
With similar reasoning as in Step 1 we obtain 
$$ \limsup_{n \to \infty} \dfrac{C\left (N,\phi\right )}{n^k} =
\limsup_{n \to \infty} g\left (\tmfrac{1}{n}\right )
\leqslant \tautwo(N\doublesms S',S'_-).$$
By Lemma~\ref{lem:killingproductpieces} and Theorem~\ref{thm:reltorexist} we know that $\tautwo(N\doublesms S',S'_-)\ne 0$. Thus we obtain  that $k \geqslant 0$. 
 
Moreover the previous inequality
\[ \limsup_{n \to \infty} \dfrac{C\left (N,\phi\right )}{n^k}  \,\,\leqslant\,\,\tautwo(N\doublesms S', S'_-),\]
notably implies (b).

Finally it remains to prove (c). We will proceed mostly like in Step 1, but we need to introduce some extra notation. Let $\mathcal{P}_i,\mathcal{E}_i$ and  $\mathcal{F}_i$ be the sets of $0$,$1$ and $2$-cells of $S_i$. We write
$$
|\mathcal{P}|_w := \sum_{i=1}^{l} w_i\cdot |\mathcal{P}_i| = \sum_{i=1}^l w_i, \hspace*{1cm}
|\mathcal{E}|_w := \sum_{i=1}^{l} w_i\cdot  |\mathcal{E}_i|, \hspace*{1cm}
|\mathcal{F}|_w := \sum_{i=1}^{l} w_i \cdot |\mathcal{F}_i|.
$$
With this convenient notation one has $x_N\left (\phi \right ) = - \ttmsum{i=1}{l}w_i\cdot \chi(S_i) = -|\mathcal{F}|_w+|\mathcal{E}|_w-|\mathcal{P}|_w$.

Now we compute $f(t)$ the same way as  in Step 1 and obtain:
\begin{align*}
f(t)
&=
\dfrac{
{\det}_G^r\begin{pmatrix}
B_{1,\ldots,l-1}^+ & C & D^+ & D^- \\
0 & 0 & 0 & \partial^- \\
0 & 0 & - d(t^{w_i}\gamma_i) & \id
\end{pmatrix}
}{
{\det}_G^r\begin{pmatrix}
-1  \ldots  -1 & 1 \\
 & -1 \\
 & 0 \\
 d(t^{w_i}\gamma_i)  & \vdots \\ 
 & 0 \\
\end{pmatrix}\hspace{-0.3cm}
\cdot
{\det}_G^r\begin{pmatrix}
d(t^{w_i} \gamma_i) & -\id & 0 \\
0 \ldots 0 \ 1 & 0 & -1 \\
0 & \id & -1
\end{pmatrix}
}
 \\
 &=
\dfrac{1}{t^{|\mathcal{F}|_w-w_1}\max\{1,t\}^{w_1}}
\cdot 
{\det}_G^r\begin{pmatrix}
B_{1,\ldots,l-1}^+ & C & D^+ & D^- \\
0 & 0 & 0 & \partial^- \\
0 & 0 & d(t^{w_i}\gamma_i) & \id
\end{pmatrix}
\cdot 
\dfrac{1}{t^{w_1+\ldots+w_{l-1}}\max\{1,t\}^{w_l}}
 \\
 &=
\dfrac{1}{t^{|\mathcal{P}|_w+|\mathcal{F}|_w-w_1-w_l}\max\{1,t\}^{w_1+w_l}}
\cdot t^{|\mathcal{E}|_w} \cdot 
{\det}_G^r\begin{pmatrix}
B_{1,\ldots,l-1}^+ & C & D^+ & D^- \\
0 & 0 & 0 & \partial^- \\
0 & 0 & - d(\gamma_i) & d(t^{-w_i})
\end{pmatrix}\\
 &=
\dfrac{t^{-\chi(S)}}{t^{-w_1-w_l}\max\{1,t\}^{w_1+w_l}}
\cdot 
{\det}_G^r\begin{pmatrix}
B_{1,\ldots,l-1}^+ & C & D^+ & D^- \\
0 & 0 & 0 & \partial^- \\
0 & 0 & - d(\gamma_i) & d(t^{-w_i})
\end{pmatrix} \\
&=:
\dfrac{t^{x_N(\phi)}\cdot  h(t)}{t^{-w_1-w_l}\max\{1,t\}^{w_1+w_l}}.
\end{align*}
By the same reasoning as in Step 1 we have
$$\limsup_{n \to \infty} C\left (N,\phi\right)  n^k
=\limsup_{n \to \infty} h(n) \leqslant \tautwo(N\doublesms S',S'_+),$$
and therefore $k \leqslant 0$.

\begin{step3}
The 3-manifold $N$ has non-empty toroidal boundary.
\end{step3}

Finally suppose that $N$ is a 3-manifold with non-empty toroidal boundary.
Furthermore let $\phi \in H^1(N;\Z)$ be non-zero
and let $\Sigma$ be a Thurston norm minimizing surface dual to $\phi$. 
Let $N'$ be a copy of $N$ and let $\Sigma'\subset N'$ be the copy of $\Sigma$.
Let $W:=N\cup_{\partial N=\partial N'}N'$ be the double of $N$ and let $S:=\Sigma\cup_{\partial \Sigma=\partial \Sigma'}\Sigma'$ be the double of $\Sigma$.  Furthermore let $r\colon W\to N$ be the obvious retraction onto $N$. 
A short argument shows that $S$ is dual to $r^*\phi$.
Since $\Sigma$ and $\Sigma'$ are Thurston norm minimizing it follows from a standard argument, see 
\cite[Proposition~3.5]{EN85},  that $S$ is also Thurston norm minimizing. 
 Then
\[\ba{rl}
C(N,\phi)^2&=\, C(N,\phi)\cdot C\left (N',r^*(\phi)\right )\,=\,C(W,r^*\phi)
\,\, \leqslant\,\,
\tau^{(2)}(W\doublesms S,S_-)\\
&=\,\,
\tau^{(2)}(N\doublesms \Sigma, \Sigma_-)\cdot 
\tau^{(2)}(N'\doublesms \Sigma', \Sigma'_-)\,=\,
\tau^{(2)}(N\doublesms \Sigma, \Sigma_-)^2.\ea \]
Here the first equality follows from the fact that $r$ restricts to a diffeomorphism $N'\to N$. The second equality follows from Theorem~\ref{thm:previous} (2) and (3) together, i.e. the multiplicativity of $L^2$ torsion under gluings along tori.
The inequality stems from the inequality for closed manifolds that we had proved in the first two steps. The third equality is a consequence of 
Lemma~\ref{lem:tautwo-basics} (5) and the observation that the relative $L^2$-torsion of the intersection of the pairs
$(N\doublesms \Sigma, \Sigma_-)$ and $(N'\doublesms \Sigma', \Sigma'_-)$ is trivial. (This is a consequence of Lemma~\ref{lem:tautwo-basics} (3) and because each component of the intersection is given by tori, annuli or circles.) Finally the last equality follows again from the observation that $r$ restricts to a diffeomorphism $(N'\doublesms \Sigma',\Sigma'_-)\to (N\doublesms \Sigma,\Sigma_-)$. 
\end{proof}

%=================================
\subsection{Conclusion of the proof}
Now we can finally complete the proof of Theorem~\ref{main-theorem}.

\begin{proof}[Proof of Theorem~\ref{main-theorem}]

The first inequality in the theorem is a consequence of Lemma \ref{lem:a-c-multiplicative}, Lemma \ref{lem:a-c-for-sfs}, and Proposition \ref{prop:hyp_first}.

The second inequality in the theorem is exactly Proposition \ref{prop:2nd-inequality}.
\end{proof}

%===========================================================
\section{Ahyperbolic surfaces}\label{sec:ahyperbolic}

In this section we introduce the class of ahyperbolic surfaces. If $\Sigma$ is ahyperbolic then we show that $\tautwo(N\doublesms\Sigma, \Sigma_-)= 1$ and hence all terms in Theorem~\ref{main-theorem} are equal to $1$. Before we give the definition of ahyperbolic surfaces we recall the definition of a sutured manifold.
\begin{definition}
	A \textit{sutured manifold} is a quadruple $(M,R_+,R_-,\gamma)$ where $M$ is a 3-manifold
	with a partition of its boundary $\partial M$ into two subsurfaces $R_+$ and $R_-$ along their common boundary $\gamma$ (the components of $\gamma$ are called the \textit{sutures}). The surface $R_+$ is oriented by the outward-pointing normal and $R_-$ is oriented by the inward pointing one.
	
	A sutured manifold $(M,R_+,R_-,\gamma)$ is called \textit{taut} if the surfaces
	$R_{\pm}$ are both \textit{taut}, i.e.\ $R_{\pm}$ do not contain any disc component and have minimal complexity among all surfaces representing $[R_{\pm}]\in H_2(M,\gamma)$.
\end{definition}

\begin{exemple}
For a Thurston norm minimizing surface $\Sigma$ in a 3-manifold $N$ the manifold $N \doublesms \Sigma$ is a taut sutured manifold with the sutured manifold structure
\[ \left (N \doublesms \Sigma, \Sigma_+ \cup \partial\Sigma \times [0,1], \Sigma_- \cup \partial\Sigma \times [-1,0], \partial\Sigma \times \{0\}\right ).\]
\end{exemple}

\begin{definition}\label{def:ahyperbolic}
	We call a taut sutured manifold $(M,R_+,R_-,\gamma)$  \emph{ahyperbolic}  if there is a disjoint union $\mathcal{C}$ of properly embedded incompressible tori $T_1,\ldots, T_n$ and annuli $A_1,\ldots, A_k$ in $M$ such that each component $M'$ of $M\doublesms \mathcal{C}$ is, as a pair of spaces $(M',M'\cap R_- )$, homeomorphic to one of the following three simple types:
		\bnm
		\item[(a)] $(N,F)$ where $N$ is a Seifert fibered space and $F$ is a union of boundary tori and of $\pi_1$-injective annuli lying in the boundary,
		\item[(b)] $(V, C)$ where $V$ is a solid torus and $C$ is a collection of essential annuli in the boundary of $V$ (here essential means $\pi_1$-injective in $V$),
		\item[(c)] $(S\times I, S\times\left\{-1\right\})$, where $S$ is a surface (possibly with boundary) which is neither a disk nor a sphere. 
		\enm 
\end{definition}

\begin{definition}
	A Thurston norm minimizing surface $\Sigma$ in a $3$-manifold $N$ with empty or toroidal boundary is called \emph{ahyperbolic} if $N\doublesms\Sigma$, viewed as a sutured manifold, is ahyperbolic.
\end{definition}

\begin{proposition}\label{prop:torsionahyperbolic}
	If $(M,R_+,R_-,\gamma)$ is ahyperbolic, then
	\[ \tautwo(M,R_-)= 1. \]
	In particular if $\Sigma$ in $N$ is a ahyperbolic surface, then
	\[\tautwo(N\doublesms\Sigma, \Sigma_-) = 1. \]
\end{proposition}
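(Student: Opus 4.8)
The plan is to reduce the computation of $\tautwo(M,R_-)$ to the three "simple types" appearing in Definition~\ref{def:ahyperbolic} (ii), showing that each contributes trivially, and then to reassemble these pieces via the gluing formula of Lemma~\ref{lem:tautwo-basics}(5). First I would treat the building blocks: for a piece of type (c), $(S\times I,S\times\{-1\})$, we have $\tautwo(S\times I,S\times\{-1\})=1$ directly by Lemma~\ref{lem:tautwo-basics}(1). For a piece of type (a), $(N,F)$ with $N$ Seifert fibered, I would use that $N$ (being an $S^1$-bundle, at least after passing to a finite cover, or more directly by Lemma~\ref{lem:tautwo-basics}(3)) has vanishing $L^2$-Betti numbers and $\tautwo(N)=1$; since $F$ is a union of boundary tori and $\pi_1$-injective annuli, the pair $(N,F)$ also has the correct vanishing and, by Lemma~\ref{lem:tautwo-basics}(4) together with $\tautwo(F)=1$ (tori and annuli are $S^1$-bundles, or circles), we get $\tautwo(N,F)=1$. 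For a piece of type (b), $(V,C)$ with $V$ a solid torus and $C$ a collection of essential annuli in $\partial V$: again $V=S^1\times D^2$ has $\tautwo(V)=1$ by Lemma~\ref{lem:tautwo-basics}(3), and each essential annulus in $V$ is an $S^1$-bundle with $\pi_1$-injective into $V$, so $\tautwo(V,C)=1$ by Lemma~\ref{lem:tautwo-basics}(4).

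Next I would handle the gluing. Cutting $M$ along the family $\mathcal{C}=T_1\cup\dots\cup T_n\cup A_1\cup\dots\cup A_k$ expresses $(M,R_-)$ as a union of the pieces $M'$ above, glued along the tori $T_j$ and annuli $A_i$; each such gluing surface is incompressible (hence $\pi_1$-injective), and for each the pair $(Z,Z\cap R_-)$ with $Z$ a torus or an annulus has vanishing $L^2$-Betti numbers and trivial $L^2$-torsion, again by Lemma~\ref{lem:tautwo-basics}(3) and the fact that the relevant intersections with $R_-$ are unions of annuli and circles. Applying Lemma~\ref{lem:tautwo-basics}(5) inductively over the components of $\mathcal{C}$, the torsion of $(M,R_-)$ is the product of the torsions of the pieces divided by the torsions of the intersection pieces, i.e.\ a product of $1$'s, hence $\tautwo(M,R_-)=1$. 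The "In particular" clause for an ahyperbolic surface $\Sigma$ in $N$ then follows immediately from the Example identifying $N\doublesms\Sigma$ with a taut sutured manifold and from the definition of an ahyperbolic surface.

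The main obstacle I expect is bookkeeping the hypotheses of Lemma~\ref{lem:tautwo-basics}(5): one must check, piece by piece and gluing surface by gluing surface, that all the required $L^2$-Betti numbers vanish and that all inclusion-induced $\pi_1$-maps are injective, so that the inductive decomposition is legitimate. In particular one needs that the annuli $A_i$ are $\pi_1$-injective in $M$ (which is part of the incompressibility hypothesis) and that the partition of each $\partial M'$ into its intersections with $R_+$ and $R_-$ is compatible with the partitions in Definition~\ref{def:ahyperbolic}; the condition that each annulus component of $\mathcal{C}$ meets both $R_+$ and $R_-$ is exactly what makes the intersections $Z\cap R_-$ be annuli rather than something worse. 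A secondary technical point is to set up the induction on $\#\pi_0(\mathcal{C})$ (or on the number of pieces) carefully, splitting off one piece at a time along a subsurface of $\mathcal{C}$ so that Lemma~\ref{lem:tautwo-basics}(5) applies at each stage with the two sides being a single simple piece and the remainder.
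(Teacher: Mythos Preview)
Your proposal is correct and follows essentially the same approach as the paper: first verify $\tautwo=1$ for each of the three simple types using the appropriate parts of Lemma~\ref{lem:tautwo-basics} (including the finite-cover-by-an-$S^1$-bundle argument for Seifert fibered pieces, which the paper also invokes via Lemma~\ref{lem:tautwo-basics}(2),(3),(4)), and then apply the gluing formula Lemma~\ref{lem:tautwo-basics}(5) inductively over the tori and annuli in $\mathcal{C}$. The bookkeeping you flag about $\pi_1$-injectivity and the vanishing of $L^2$-Betti numbers for the intersection pieces is exactly what the paper handles implicitly, noting that incompressibility is used throughout.
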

\begin{proof}
	This proposition follows from Lemma~\ref{lem:tautwo-basics} in the following way. We first look at the three pairs (a), (b) and (c) in the definition of ahyperbolic.
	For a pair $(N,F)$ of type (a) we have $\tautwo(N,F)=1$ by Lemma~\ref{lem:tautwo-basics} (2), (3) and (4) and the fact that every Seifert fibered space is finitely covered by an $S^1$-bundle \cite[Flowchart 1]{AFW15}.

	For a pair $(V,C)$ of type (b) we have $\tautwo(V,C)=1$ by Lemma~\ref{lem:tautwo-basics} (3) and (4).	
	For a pair $(S\times I, S_-)$ of type (c) we have $\tautwo(S\times I, S_-)$ by Lemma~\ref{lem:tautwo-basics} (1).

	The proposition follows by decomposing $(M,R_+)$ along the annuli and tori of the definition of ahyperbolicity and successively applying Lemma~\ref{lem:tautwo-basics} (5). All terms appearing on the right in the equation of Lemma~\ref{lem:tautwo-basics} (5) are equal to $1$.
	Note that implicitly we used the incompressibility of the annuli and tori, without explicitly mentioning it.
\end{proof}

In the remainder of this section we will give examples of ahyperbolic surfaces.

\begin{proposition}
	Let $N$ be a graph manifold, then every Thurston norm minimizing surface $\Sigma$ in $N$ is ahyperbolic and hence
	\[ \tautwo(N\doublesms \Sigma,\Sigma_-)\,\,=\,\,1.\]
\end{proposition}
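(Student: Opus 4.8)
The plan is to reduce this statement to Proposition~\ref{prop:torsionahyperbolic} by showing that any Thurston norm minimizing surface $\Sigma$ in a graph manifold $N$ is ahyperbolic in the sense of Definition~\ref{def:ahyperbolic}. Once that is established, the equality $\tautwo(N\doublesms\Sigma,\Sigma_-)=1$ is immediate. So the content is entirely geometric-topological: producing the decomposing family $\mathcal{C}$ of incompressible tori and annuli inside the sutured manifold $N\doublesms\Sigma$ so that each remaining piece is of type (a), (b), or (c).

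First I would take the JSJ-decomposition of $N$; since $N$ is a graph manifold, every JSJ-piece is Seifert fibered. Let $\mathcal{T}$ be the collection of JSJ-tori. I would isotope $\Sigma$ into a \emph{horizontal-or-vertical} position with respect to each Seifert piece: by the standard theory of incompressible surfaces in Seifert fibered spaces (Waldhausen), after an isotopy the intersection of $\Sigma$ with each JSJ-piece $M_v$ is either horizontal (transverse to all fibers, hence a cover of the base orbifold) or vertical (a union of fibered annuli and tori). I would also arrange $\Sigma$ to meet the JSJ-tori in a minimal, essential collection of circles. Then inside $N\doublesms\Sigma$, I would let $\mathcal{C}$ consist of: the JSJ-tori (or the sub-annuli and sub-tori they get cut into by $\Sigma$), together with the vertical annuli coming from the vertical parts of $\Sigma$. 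The verification that each annulus of $\mathcal{C}$ touches both $R_+$ and $R_-$ and is not boundary-parallel follows from essentiality of the intersection curves and the fact that $\Sigma$ is Thurston norm minimizing (no boundary-parallel annulus components, using Proposition~\ref{prop:everything-pi1-injective}).

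Next I would check that each component $M'$ of $(N\doublesms\Sigma)\doublesms\mathcal{C}$, as a pair $(M', M'\cap R_-)$, is of one of the three simple types. A component lying in a Seifert piece $M_v$ where $\Sigma\cap M_v$ is horizontal: cutting a Seifert fibered space along a horizontal surface yields a piece that is a (twisted) $I$-bundle over the horizontal surface, i.e.\ of type (c) $(S\times I, S\times\{-1\})$ — here one uses that a horizontal surface in a Seifert piece is fibered over the base and the complement fibers over $I$; the subtlety of twisted versus trivial $I$-bundles can be absorbed by passing to the orientation-double or by noting the definition only requires a homeomorphism of pairs. A component lying in a piece where $\Sigma$ is vertical: after cutting along the vertical annuli in $\mathcal{C}$, the Seifert fibration survives and the piece is Seifert fibered with the relevant boundary being fibered annuli and tori, i.e.\ type (a); degenerate cases where a piece becomes a solid torus give type (b). I would organize this as: horizontal pieces $\to$ (c); vertical pieces $\to$ (a) or (b).

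The main obstacle I expect is the bookkeeping of the horizontal case and the $I$-bundle structure — in particular making sure that after cutting a Seifert fibered piece along a horizontal surface the resulting pair is genuinely a product $I$-bundle pair $(S\times I, S\times\{-1\})$ rather than only a twisted $I$-bundle, and handling the interaction between the horizontal surface and the boundary of the Seifert piece (where fibered annuli in $\partial M_v$ coming from $\Sigma$ must be incorporated into $\mathcal{C}$ rather than left as part of a piece). A clean way around this is to first apply Lemma~\ref{lem:killingproductpieces} / Proposition~\ref{prop:turaevsalgorithm} to replace $\Sigma$ by a convenient surface, and then to invoke the fact (already used in Proposition~\ref{prop:torsionahyperbolic}) that every Seifert fibered space is finitely covered by an $S^1$-bundle, so that the precise $I$-bundle type is irrelevant for the torsion computation. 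Modulo this standard Seifert-geometry input, the proof is then the single line: $\Sigma$ is ahyperbolic, so Proposition~\ref{prop:torsionahyperbolic} gives $\tautwo(N\doublesms\Sigma,\Sigma_-)=1$.
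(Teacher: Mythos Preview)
Your strategy matches the paper's: cut along the JSJ-tori, analyze each Seifert piece, and assemble the family $\mathcal{C}$. Two ingredients sharpen your outline into a complete proof. First, instead of the general horizontal/vertical dichotomy for incompressible surfaces, the paper invokes \cite[Theorem~VI.34]{Ja80}: a Thurston norm minimizing surface in a Seifert fibered space is either a disjoint union of \emph{fibers of a fibration over $S^1$}, or a disjoint union of \emph{saturated} tori and annuli. The first case yields genuine products (type (c)), the second leaves a Seifert fibered complement (type (a)) --- so the twisted $I$-bundle worry simply does not arise. Second, to apply this piecewise one needs $\Sigma\cap N'$ to be Thurston norm minimizing in each JSJ-piece $N'$; the paper cites the argument of \cite[Proposition~3.5]{EN85} for this, which is a stronger conclusion than merely arranging essential intersection with the JSJ-tori.

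Your proposed workarounds at the end would not establish ahyperbolicity. Turaev's algorithm replaces $\Sigma$ by another surface, but the proposition asserts that \emph{every} Thurston norm minimizing $\Sigma$ is ahyperbolic, so you are not free to change it. Passing to a finite $S^1$-bundle cover lets you compute $\tautwo$ via Lemma~\ref{lem:tautwo-basics}, but says nothing about the decomposition of the pair $(N\doublesms\Sigma,\Sigma_-)$ itself. Either route would salvage the equality $\tautwo(N\doublesms\Sigma,\Sigma_-)=1$ but not the first clause of the proposition; the clean fix is the sharper Jaco theorem above.
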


\begin{proof}
First we assume that $N$ is a Seifert fibered space. By \cite[Theorem~VI.34]{Ja80}  every Thurston norm minimizing surface is one of the following:
\begin{enumerate}
\item it is  either a disjoint union of fibers of a fibration over $S^1$ or
\item it is  a disjoint union of tori and annuli, each of which is saturated in the Seifert fibration (i.e. an union of fibers).
\end{enumerate}
In the first case $N\doublesms \Sigma$ consists of products and hence are of type (c). 
In the other case $N\doublesms \Sigma$ is 
a Seifert-fibered space with boundary, so every boundary fiber is $\pi_1$-injective, thus
$N\doublesms \Sigma$ is
of type (a).
 This proves the proposition for Seifert fibered spaces.

Now let $N$ be a graph manifold. Let $\mathcal{T}$ be the collection of JSJ-tori. As shown in the proof of \cite[Proposition~3.5]{EN85} we can assume that for any $N'\in\mathcal{J}(N)$ the intersection $\Sigma\cap N'$ is a Thurston norm minimizing surface in $N'$. Note that by definition of graph manifolds every $N'$ is Seifert fibered. 
The JSJ-tori in $N$ give rise to a collection of annuli and tori in $N\doublesms \Sigma$ satisfying the condition of Definition~\ref{def:ahyperbolic} so that the proposition follows from the first case.
\end{proof}

In \cite{AD15} Agol and Dunfield introduce the notation of a libroid knot. This is related to our concept in the following way. 
If a knot $K$ is libroid, then by definition, there exists  an $n\in \N$ such that $n\cdot\phi_K\in H^1(E_K; \Z)$ is represented by a surface  $\Sigma$ which is ahyperbolic.
 Recall from Lemma~\ref{lem:zero-phi}~(3) that $(E_K,n\cdot\phi_K)$ and $(E_K,\phi_K)$ yield the same leading coefficient.

In \cite[Section 6]{AD15} it is proven that the class of libroid knots contains all $2$-bridge knots. We can now combine this fact with Theorem \ref{main-theorem}, Lemma~\ref{lem:zero-phi}~(3) and Proposition \ref{prop:torsionahyperbolic} to obtain:

\begin{corollary}\label{cor:nonfiberedhyperbolic}
	There exist infinitely many non-fibered hyperbolic knots in $S^3$ such that 
	\[ C(E_K,\phi_K)\,\,=\,\,1.\]
\end{corollary}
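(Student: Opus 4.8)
The plan is to combine three ingredients that have all been established (or cited) earlier in the excerpt: the main inequality of Theorem~\ref{main-theorem}, the vanishing of the relative torsion for ahyperbolic surfaces (Proposition~\ref{prop:torsionahyperbolic}), and the topological input of Agol--Dunfield~\cite{AD15} on libroid knots, together with the classical fact that there is an infinite family of non-fibered hyperbolic $2$-bridge knots (e.g.\ the twist knots). First I would recall that by \cite[Section~6]{AD15} every $2$-bridge knot $K$ is libroid, which by definition means that for some $n\in\N$ the class $n\cdot\phi_K\in H^1(E_K;\Z)$ is represented by an ahyperbolic surface $\Sigma$.

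Next I would apply Theorem~\ref{main-theorem} to the pair $(E_K, n\phi_K)$ and the surface $\Sigma$: the second inequality gives $C(E_K, n\phi_K)\leqslant \tautwo(E_K\doublesms\Sigma,\Sigma_-)$, and the first inequality gives the lower bound $A(E_K,n\phi_K)\leqslant C(E_K,n\phi_K)$. Since $E_K$ is hyperbolic its only JSJ-component is itself, and $n\phi_K\ne 0$ restricts nontrivially to it, so $A(E_K,n\phi_K)=1$; thus $1\leqslant C(E_K,n\phi_K)$. On the other hand, since $\Sigma$ is ahyperbolic, Proposition~\ref{prop:torsionahyperbolic} yields $\tautwo(E_K\doublesms\Sigma,\Sigma_-)=1$. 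Combining, $C(E_K,n\phi_K)=1$. Finally, by Lemma~\ref{lem:zero-phi}~(3) we have $\tautwo(E_K,n\phi_K)(t)=\tautwo(E_K,\phi_K)(t^n)$, and since $n>0$ the reparametrization $t\mapsto t^n$ is an orientation-preserving homeomorphism of $\R_{>0}$ fixing the behavior as $t\to\infty$ (up to a power of $t$), so the leading coefficient is unchanged: $C(E_K,\phi_K)=C(E_K,n\phi_K)=1$.

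To finish, I would invoke the existence of infinitely many non-fibered hyperbolic $2$-bridge knots. The twist knots provide such a family: all but finitely many are hyperbolic, and infinitely many of them (those that are not the figure-eight or trefoil) are non-fibered, as can be read off from their Alexander polynomials not being monic. Applying the previous paragraph to each such $K$ gives $C(E_K,\phi_K)=1$, which is exactly the statement of the corollary.

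The only mildly delicate point is the passage from $n\phi_K$ back to $\phi_K$ via Lemma~\ref{lem:zero-phi}~(3); one must check that substituting $t\mapsto t^n$ does not alter the value of the leading coefficient. This is immediate from the definition of $C(N,\phi)$ in Theorem~\ref{thm:previous2}~(4): if $\tau(t)\sim C\cdot t^k$ as $t\to\infty$ then $\tau(t^n)\sim C\cdot t^{nk}$ as $t\to\infty$, so the constant $C$ is preserved. Everything else is a direct citation of results already in hand, so I expect no real obstacle beyond assembling these pieces carefully and pinning down a concrete infinite non-fibered hyperbolic subfamily of $2$-bridge knots.
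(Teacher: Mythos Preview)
Your proposal is correct and follows essentially the same route as the paper: the paper also deduces the corollary by combining the Agol--Dunfield libroid result for $2$-bridge knots with Theorem~\ref{main-theorem}, Proposition~\ref{prop:torsionahyperbolic}, and Lemma~\ref{lem:zero-phi}~(3), and then invokes the twist knots as the concrete infinite family of non-fibered hyperbolic $2$-bridge knots. Your write-up simply spells out more of the details (in particular the passage from $n\phi_K$ back to $\phi_K$) that the paper leaves implicit.
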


%===========================
\section{Examples from the Borromean rings}\label{section:examples}

We now compute the leading coefficients for the Borromean rings, and as a consequence for several infinite families of links as well.

\begin{remark}
Note that the Borromean rings of Figure \ref{fig boro} and the $m$-Whitehead links of Figure \ref{fig m whitehead} all have positive volume, and thus calculating their leading coefficients is \textit{a priori} nontrivial. One way among many to check that the aforementioned links have positive volume is to remark that the exterior of each of these links admits a Dehn filling that yields the exterior of a twist knot with at least $4$ crossings; then recall that such a twist knot
is not a torus knot (as can be seen by looking at the Alexander polynomial),
hence it is hyperbolic by \cite[Corollary 2]{Men84}. Finally use the fact due to Thurston \cite[Theorem 6.5.6]{Th79} that volume decreases under Dehn filling.
\end{remark}

\begin{prop} \label{prop boro}
Let $B = B_1 \cup B_2 \cup B_3$ be the Borromean rings, $E_B = S^3 \setminus \nu B$ the corresponding link exterior and let 
$\phi \in H^1(E_B;\Q) \cong \Q^3$. 
If $ \phi \neq 0$ then 
$C(E_B,\phi)=1$.
\end{prop}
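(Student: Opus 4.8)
The plan is to determine $C(E_B,\phi)$ by showing that the two bounds of Theorem~\ref{main-theorem} both equal~$1$. For the lower bound, note that $E_B$ is hyperbolic, so its JSJ decomposition is trivial, $\JJ(E_B)=\{E_B\}$, and the unique JSJ-component $E_B$ itself satisfies $\phi|_{E_B}=\phi\neq 0$; hence the product $A(E_B,\phi)=\prod_{M\in\JJ(E_B),\,\phi|_M=0}e^{\vol(M)/6\pi}$ is empty, so $A(E_B,\phi)=1$ and Theorem~\ref{main-theorem} gives $C(E_B,\phi)\geq 1$. (This inequality is also Theorem~\ref{thm:previous2}~(4a).)

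For the upper bound it suffices, by the second inequality of Theorem~\ref{main-theorem} together with Proposition~\ref{prop:torsionahyperbolic}, to produce for each non-zero $\phi$ a Thurston norm minimizing surface dual to $\phi$ that is \emph{ahyperbolic}. I would first cut down the work using symmetry: permuting the three components and reversing their orientations realises every signed permutation of the coordinates of $H^1(E_B;\Z)\cong\Z^3$, so we may take $\phi=(a,b,c)$ with $a\geq b\geq c\geq 0$. Since the pairwise linking numbers of $B$ vanish, each $B_i$ bounds a Seifert surface $\Sigma_i\subset E_B$; computing the multivariable Alexander polynomial ($\Delta_B\doteq(t_1-1)(t_2-1)(t_3-1)$) and feeding it into McMullen's inequality $\|\cdot\|_A\leq x_{E_B}$ (which applies since $b_1(E_B)=3\geq 2$) shows that $x_{E_B}(a,b,c)=a+b+c$ for $a,b,c\geq 0$, and that $\phi_1:=\mathrm{PD}[\Sigma_1],\phi_2:=\mathrm{PD}[\Sigma_2],\phi_3:=\mathrm{PD}[\Sigma_3]$ lie on a common top-dimensional face of the $x_{E_B}$-unit ball. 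Consequently the oriented double-curve sum $\Sigma$ of $a$ parallel copies of $\Sigma_1$, $b$ of $\Sigma_2$ and $c$ of $\Sigma_3$ (coherently oriented, as all weights are non-negative) is again Thurston norm minimizing and dual to $\phi$.

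The crux — and the step I expect to be the main obstacle — is to check that this $\Sigma$ is ahyperbolic. For this I would place the Borromean rings in their standard symmetric position, make $\Sigma_1,\Sigma_2,\Sigma_3$ and their double-curve sum explicit, and exhibit inside $E_B\doublesms\Sigma$ a system of essential annuli — coming from the co-cores of the bands used to build the $\Sigma_i$ and from the meridian discs of those $B_j$ with $\phi(\mu_j)=0$ — which, together with the boundary tori, cut $E_B\doublesms\Sigma$ into pieces of types (b) and (c) of Definition~\ref{def:ahyperbolic}: solid tori with essential boundary annuli, and products $S\times I$. The case analysis is guided by two observations. First, as soon as one of $a,b,c$ vanishes the class $\phi$ is non-fibered — a fibration $E_B\to S^1$ restricts to a fibration on each cusp torus, forcing $\phi(\mu_i)\neq 0$ for all $i$ — so for $\phi=(a,b,0)$ and $\phi=(a,0,0)$ one genuinely needs Proposition~\ref{prop:torsionahyperbolic}, and this is where the explicit work sits. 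Second, for $a,b,c>0$ there is a shortcut if the face $\mathrm{conv}(\phi_1,\phi_2,\phi_3)$ is fibered (which one should verify; the symmetric class $(1,1,1)$ would then be fibered with fibre a genus-one surface with three punctures, one per cusp): in that case $E_B\doublesms\Sigma$ is a product and $\tautwo(E_B\doublesms\Sigma,\Sigma_-)=1$ is immediate from Lemma~\ref{lem:tautwo-basics}~(1). Either way one obtains $\tautwo(E_B\doublesms\Sigma,\Sigma_-)=1$, hence $C(E_B,\phi)\leq 1$; together with the lower bound this gives $C(E_B,\phi)=1$.
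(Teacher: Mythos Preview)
Your overall strategy is legitimate but genuinely different from the paper's, and as written it has a gap precisely at the step you flag as ``the crux''.

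The paper does not touch Theorem~\ref{main-theorem} or ahyperbolic surfaces for this proposition. Instead it works directly from a two-relator Wirtinger presentation $G=\langle a,b,c\mid [a,[c,b^{-1}]],\,[b,[a,c^{-1}]]\rangle$, uses Fox calculus to write a representative of $\tautwo(E_B,\phi)(t)$ as the Fuglede--Kadison determinant of an explicit $2\times 2$ matrix over $\mathcal{N}(G)$, and then (after normalising so that $\phi(b)=-1$) expands this determinant as a power series in $t$ (respectively $t^{-1}$). The lowest (resp.\ highest) order coefficient $U_0$ (resp.\ $V_0$) is computed case by case according to the signs of $\phi(a)$ and $\phi(c)$ --- nine cases each --- and in every case turns out to be $\pm R_g$, $\pm(R_g-\id)R_h$ or $\pm(R_g-\id)(R_h-\id)$ for nontrivial $g,h\in G$, hence injective with $\det_G=1$. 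Upper semicontinuity of the regular determinant (Lemma~\ref{lem:uppersemicont}) then forces $C(E_B,\phi)\leq 1$. No surfaces enter at all.

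Your route --- produce an ahyperbolic Thurston norm minimizing representative for every nonzero $\phi$ and invoke Proposition~\ref{prop:torsionahyperbolic} --- would, if carried out, give a more geometric proof and would fit neatly into the paper's own framework. But you have not carried it out: you only describe the \emph{kind} of annulus system you hope to find in $E_B\doublesms\Sigma$, without exhibiting it or checking incompressibility and the type-(a)/(b)/(c) classification of the pieces. For a hyperbolic link exterior this is not a formality; ahyperbolicity is a sutured-manifold (guts) computation that has to be done by hand, in the spirit of Agol--Dunfield's argument for $2$-bridge knots. The non-fibered cases $\phi=(a,0,0)$ and $\phi=(a,b,0)$ are exactly where the content lies, and nothing in your outline establishes that the complementary sutured manifold has empty guts there. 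Even in the fibered regime $a,b,c>0$ you defer the check that the face is fibered; that one is true (the Borromean complement fibers with a thrice-punctured torus for $(1,1,1)$, and the whole open face is fibered by Thurston), but it still needs to be said.

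In short: the lower bound and the symmetry/Thurston-norm preliminaries are fine, and the plan is a reasonable alternative to the paper's computation, but the proof is incomplete --- the ahyperbolicity verification, which is the entire substance of the upper bound, is missing.
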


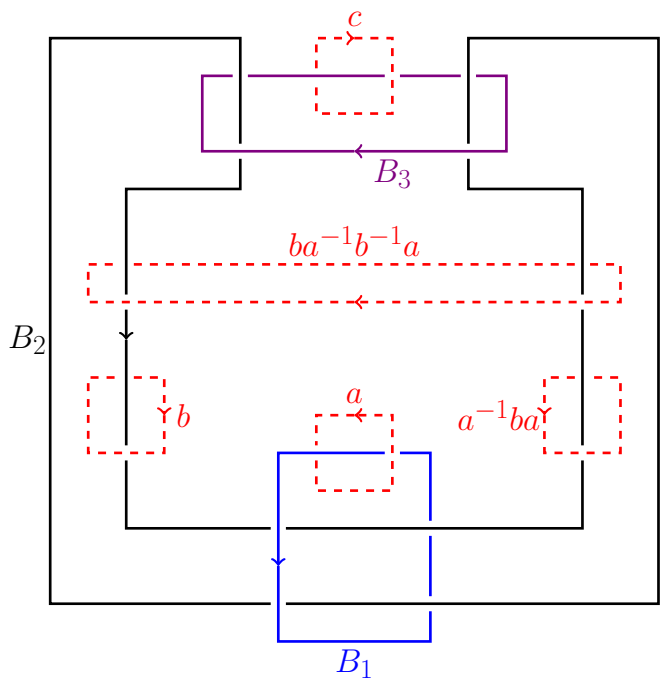
\begin{figure}[h]
\centering
\begin{tikzpicture}[every path/.style={string ,black} , every node/.style={transform shape , knot crossing , inner sep=1.5 pt } ]

\begin{scope}[scale=0.5]

%B3
\draw[color=violet] (1+0.2,5) -- (3-0.2,5);
\draw[color=violet][->] (3+0.2,5) -- (4,5) -- (4,3) -- (0,3);
\draw[color=violet] (0,3) -- (-4,3) -- (-4,5) -- (-3-0.2,5);
\draw[color=violet] (-3+0.2,5) -- (1-0.2,5);
\draw[color=violet]  (1,2.4) node {\huge $B_3$} ;

%B2
\draw (3,3-0.2) -- (3,2) -- (6,2) -- (6,-1+0.2);
\draw (6,-1-0.2) -- (6,-5+0.2);
\draw (6,-5-0.2) -- (6,-7) -- (-2+0.2,-7);
\draw (-2-0.2,-7) -- (-6,-7) -- (-6,-5-0.2);
\draw (-6,-5+0.2) -- (-6,-2);
\draw[->] (-6,-1-0.2) -- (-6,-2);
\draw (-6,-1+0.2) -- (-6,2) -- (-3,2) -- (-3,3-0.2);
\draw (-3,3+0.2) -- (-3,6) -- (-8,6) -- (-8,-9) -- (-2-0.2,-9);
\draw (-2+0.2,-9) -- (8,-9) -- (8,6) -- (3,6) -- (3,3+0.2);
\draw  (-8.6,-2) node {\huge $B_2$} ;

%B1
\draw[color=blue] (-2,-8) -- (-2,-10) -- (2,-10) -- (2,-9-0.2);
\draw[color=blue] (2,-9+0.2) -- (2,-7-0.2);
\draw[color=blue] (2,-7+0.2) -- (2,-5) -- (1+0.2,-5);
\draw[color=blue][->] (1-0.2,-5) -- (-2,-5) -- (-2,-8);
\draw[color=blue]  (0,-10.6) node {\huge $B_1$} ;

%a
\draw[style=dashed][color=red][->] (-1,-5-0.2) -- (-1,-6) -- (1,-6) -- (1,-4) --(0,-4);
\draw[style=dashed][color=red] (0,-4) -- (-1,-4) -- (-1,-5+0.2);
\draw[color=red]  (0,-3.5) node {\huge $a$} ;

%b
\draw[style=dashed][color=red][->] (-6+0.2,-3) -- (-5,-3) --(-5,-4);
\draw[style=dashed][color=red] (-5,-4) -- (-5,-5) -- (-7,-5) -- (-7,-3) -- (-6-0.2,-3);
\draw[color=red]  (-5+0.5,-4) node {\huge $b$} ;

%a-1ba
\draw[style=dashed][color=red][->] (6-0.2,-3) -- (5,-3) --(5,-4);
\draw[style=dashed][color=red] (5,-4) -- (5,-5) -- (7,-5) -- (7,-3) -- (6+0.2,-3);
\draw[color=red]  (3.8,-4) node {\huge $a^{-1}ba$} ;

%ba-1b-1a
\draw[style=dashed][color=red][->] (6+0.2,0) -- (7,0) -- (7,-1) -- (0,-1);
\draw[style=dashed][color=red] (0,-1) -- (-7,-1) -- (-7,0) -- (-6-0.2,0);
\draw[style=dashed][color=red] (-6+0.2,0) -- (6-0.2,0);
\draw[color=red] (0,0.6) node {\huge $ba^{-1}b^{-1}a$} ;

%c
\draw[style=dashed][color=red][->] (-1,5+0.2) -- (-1,6) -- (0,6);
\draw[style=dashed][color=red] (0,6) -- (1,6) -- (1,4) -- (-1,4) -- (-1,5-0.2);
\draw[color=red]  (0,6.5) node {\huge $c$} ;

\end{scope}

\end{tikzpicture}
\caption{The Borromean rings $B = B_1 \cup B_2 \cup B_3$} \label{fig boro}
\end{figure}

\begin{proof}
The group $G$ of the link $B$ admits the presentation
$\langle a,b,c | r, s\rangle$
where $a,b,c$ are respective meridians of $B_1, B_2, B_3$ and (with the notation $[x,y] := x y x^{-1} y^{-1}$) the relators are $r=[a,[c,b^{-1}]]$ and $s=[b,[a,c^{-1}]]$. The previous group presentation can be obtained with the Wirtinger process from the diagram  in Figure  \ref{fig boro}, where the elements of $G$ are drawn with red dashed lines. Note that among the three relators $r,s$ and $[c,[b,a^{-1}]]$, any one is a consequence of the two others, and each represents the commutation relation between a meridian and a longitude of a component $B_i$.

Let us assume $\phi \neq 0$. Then without loss of generality we can assume that $\phi(b) \neq 0$ up to reordering components, then that $\phi(b) < 0$ up to changing orientations, and finally that $\phi(b)=-1$ up to multiplying $\phi$ by a positive rational number, which does not change the value of $C(E_B,\phi)$ by Lemma \ref{lem:zero-phi} (3). Denote $\alpha=\phi(a)$ and $\gamma=\phi(c)$, two rational numbers.

The $L^2$-Alexander torsion of $(E_B,\phi)$ admits a representative of the form
\begin{align*}
\tautwo (E_B,\phi) (t) &= \dfrac{\det_G
\begin{pmatrix}
(\id - t^\gamma R_c)  t R_{cb^{-1}c^{-1}} (t^\alpha R_a - \id) 
&
 \id - R_{ac^{-1}a^{-1}c} 
 \\
(\id - t R_{cb^{-1}c^{-1}}) (t^\alpha R_a - \id) 
&
(t^\alpha R_a - \id)  \frac{1}{t^{\gamma}} R_{ac^{-1}a^{-1}} 
\left (\id - \frac{1}{t}R_{b}\right )
\end{pmatrix}
}{\det_G\left (t^\alpha R_a - \id \right )}\\
&=\det{\hspace*{-0.05cm}}_G \begin{pmatrix}
(\id - t^\gamma R_c)  t R_{cb^{-1}c^{-1}}
&
 \id - R_{ac^{-1}a^{-1}c} 
 \\
\id - t R_{cb^{-1}c^{-1}}  
&
(t^\alpha R_a - \id) \frac{1}{t^{\gamma}} R_{ac^{-1}a^{-1}} 
\left (\id - \frac{1}{t}R_{b}\right )
\end{pmatrix},
\end{align*}
where the first equality comes from Fox calculus and \cite[Proposition 3.3]{BAC17} and the second one from elementary operations on matrices (right product on the first column), see \cite[Section 3.2]{Lu02}.

Let us denote $A_t = (\id - t^\gamma R_c)  (t R_{cb^{-1}c^{-1}})$, $B_t= \id - R_{ac^{-1}a^{-1}c}$, $C_t= \id - t R_{cb^{-1}c^{-1}}$ and $D_t = (t^\alpha R_a - \id)  \left (\frac{1}{t^{\gamma}} R_{ac^{-1}a^{-1}}\right ) \left (\id - \frac{1}{t}R_{b}\right )$.
It follows from properties of the Fuglede-Kadison determinant (see for example \cite[Theorem 3.14]{Lu02}) that for all $A,B,C,D \in \mathcal{N}(G)$ such that $C$ is invertible, one has
$\det_G \begin{pmatrix} A & B \\ C & D \end{pmatrix}
= \det_G(C) \det_G(A C^{-1} D - B)$.
If we fix $t\in (0,1)$, then $C_t= \id - t R_{cb^{-1}c^{-1}}$  is invertible with  $C_t^{-1} = \sum_{n=0}^{\infty} (t R_{cb^{-1}c^{-1}})^n$. 
It follows that
$\tautwo (E_B,\phi) (t) = \max\{1,t\} \cdot \det_G(A_t C_t^{-1} D_t - B_t)$
from the above observation and from Proposition \ref{prop:op_det} (6).
Then $S_t:=A_t C_t^{-1} D_t - B_t$ has the following form:
$$
S_t=R_{ac^{-1}a^{-1}c} - \id +
(\id - t^\gamma R_c) \left (\sum_{n=1}^{\infty} (t R_{cb^{-1}c^{-1}})^n\right ) (t^\alpha R_a - \id)  \left (\frac{1}{t^{\gamma}} R_{ac^{-1}a^{-1}}\right ) \left (\id - \frac{1}{t}R_{b}\right ).
$$

Now remark that $S_t = t^d \left (U_0 + \sum_{n=1}^{\infty} t^n U_n\right )$ where the $U_n$ are in $R_{\Z[G]}$, and the lowest degree $d\in \Q$ satisfies
$$d=\min\{0,\min\{0,\alpha\}+\min\{0,\gamma\}-\gamma\}
=\min\{0,\alpha\}+\min\{0,-\gamma\}.$$ 
We claim that for all possible values of $\alpha,\gamma$, we have that $U_0$ is injective and $\det_G(U_0)=1$.

To prove the claim, we first compute the different values of $(d,U_0)$ depending on the values of $\alpha,\gamma$:
$$\kbordermatrix{\mbox{}& \alpha>0 & \alpha=0 & \alpha<0 \\
\gamma>0 & (-\gamma,R_{ac^{-1}a^{-1}}) & (-\gamma,R_{ac^{-1}a^{-1}}-R_{bab^{-1}c^{-1}}) & (\alpha-\gamma,-R_{bab^{-1}c^{-1}}) \\
\gamma=0 & (0,R_{ac^{-1}a^{-1}}-\id) & (0,(R_{bab^{-1}}-\id)(\id-R_{ac^{-1}a^{-1}})) & (\alpha,(\id-R_{c^{-1}})R_{bab^{-1}}) \\
\gamma<0 & (0,-\id) & (0,R_{bab^{-1}}-\id) &  (\alpha,R_{bab^{-1}})
}.$$
As an example, we consider the case when $\gamma=0$ and $\alpha>0$ in more details: we get $d=0$ and from the expression of $S_t$, also
\[\ba{rl} U_0&= R_{ac^{-1}a^{-1}c} - \id +
(\id - R_c)  R_{cb^{-1}c^{-1}} R_{ac^{-1}a^{-1}} R_b
\\ &= R_{ac^{-1}a^{-1}c} - \id + R_{bac^{-1}a^{-1}cb^{-1}c^{-1}} - R_{bac^{-1}a^{-1}cb^{-1}}
\\ &= R_{ac^{-1}a^{-1}c} - \id + R_{ac^{-1}a^{-1}} - R_{ac^{-1}a^{-1}c} = R_{ac^{-1}a^{-1}} - \id,
\ea
\]
where the third equality comes from the relation $s=[b,[a,c^{-1}]]$ in $G$.

Note that the second term $U_0$ of each entry in the previous matrix is of one of the forms $\pm R_g, \pm (R_g - \id)R_h$ or $\pm (R_g - \id)(R_h-\id)$ with $g,h \in G$ not equal to the neutral element of $G$ (this can be checked by the fact that they have non trivial abelianizations in $G^{ab}\cong \Z^3$), and all these operators are injective with Fuglede-Kadison determinant $1$ by Proposition \ref{prop:op_det} (6)
(where the $t$ in the proposition is $1$ here)
 and \cite[Section 3.2]{Lu02} (notably the fact that the Fuglede-Kadison determinant is multiplicative on injective operators).
This concludes the proof of the claim.

It now follows from the claim and the upper semi-continuity of the Fuglede-Kadison determinant \cite[Section 3.7]{Lu02} that
\begin{equation}\label{equ1}
\lim_{t\to 0^+}t^{-d} \tau^{(2)}(E_B,\phi)(t) =
  \lim_{t\to 0^+}t^{-d}\det\hspace*{-0.05cm}{}_{G} (S_t) \leqslant \det\hspace*{-0.05cm}{}_G(U_0)=1,\end{equation} 
where the first equality comes from the fact that 
$\lim_{t\to 0^+} \max\{1,t\} =1$.

We can now similarly study the asymptotical behavior of the same representative function $\tau^{(2)}(E_B,\phi)(t)$, as $t \to \infty$ this time. It follows from  \cite[Section 3.2]{Lu02} that:
\begin{align*}
\tau^{(2)}(E_B,\phi)(t) 
&=\det{\hspace*{-0.05cm}}_G \begin{pmatrix}
(\id - t^\gamma R_c)  t R_{cb^{-1}c^{-1}}
&
 \id - R_{ac^{-1}a^{-1}c} 
 \\
\id - t R_{cb^{-1}c^{-1}}  
&
(t^\alpha R_a - \id) \frac{1}{t^{\gamma}} R_{ac^{-1}a^{-1}} 
\left (\id - \frac{1}{t}R_{b}\right )
\end{pmatrix} \\
&= t \cdot \det{\hspace*{-0.05cm}}_G \begin{pmatrix}
\id - t^\gamma R_c 
&
 \id - R_{ac^{-1}a^{-1}c} 
 \\
t^{-1} R_{cbc^{-1}}  - \id
&
(t^\alpha R_a - \id) \frac{1}{t^{\gamma}} R_{ac^{-1}a^{-1}} 
\left (\id - \frac{1}{t}R_{b}\right )
\end{pmatrix},
\end{align*}
and thus $\tau^{(2)}(E_B,\phi)(t)  = t \det_G(C'_t) 
\det_G(S'_t)$ where $B_t,D_t$ are as before, $A'_t =\id - t^
\gamma R_c$, $C'_t= t^{-1} R_{cbc^{-1}}  - \id$ and $S'_t=A'_t C'^{-1}_t D_t -B_t$. Similarly as before we remark that
$$S'_t = t^D \left (V_0 + \sum_{n=1}^{\infty} t^{-n} V_n\right ),$$
where 
the highest degree $D$ satisfies
 $$D=\max\{0,\max\{0,\alpha\}+\max\{0,\gamma\}-\gamma\}
 =\max\{0,\alpha\}+\max\{0,-\gamma\},
 $$
   $V_n \in R_{\Z[G]}$ for all $n\geqslant  0$ and $V_0$ is always injective with Fuglede-Kadison determinant $1$. This last claim is once again checked for every one of the nine distinct possibilities, summarised in the following table of pairs $(D,V_0)$: 
$$\kbordermatrix{\mbox{}& \alpha>0 & \alpha=0 & \alpha<0 \\
\gamma>0 & (\alpha,R_{a}) & (0,R_a - \id) & (0,-R_{ac^{-1}a^{-1}c}) \\
\gamma=0 & (\alpha,(R_c-\id)R_{ac^{-1}}) & (0,(R_{ac^{-1}a^{-1}}-\id)(\id-R_{a})) & (0,R_{ac^{-1}a^{-1}}-\id) \\
\gamma<0 & (\alpha-\gamma,-R_{ac^{-1}}) & (-\gamma,(R_a-\id)R_{ac^{-1}a^{-1}}) &  (-\gamma,R_{ac^{-1}a^{-1}})
}.$$
Hence, by the upper semi-continuity of the Fuglede-Kadison determinant \cite[Section 3.7]{Lu02} and the fact that $\det_G(C'_t) = \max\{1,t^{-1}\} \sim_{t \to \infty} 1$, we have:
\begin{equation}\label{equ2}
\lim_{t\to \infty}t^{-1-D} \tau^{(2)}(E_B,\phi)(t) =  \lim_{t\to \infty}t^{-D}\det\hspace*{-0.05cm}{}_{G} (S'_t) \leqslant \det\hspace*{-0.05cm}{}_G(V_0)=1.\end{equation}

Recall that $x_{E_B}(\phi)=|\alpha|+|\beta|+|\gamma|$ (see 
 \cite[Example 2]{Th86}),
 thus $x_{E_B}(\phi)=1+D-d$ by definition of $d$ and $D$ and the assumption that $\beta=-1$.

Now, the inequality \eqref{equ1} implies that 
$\lim_{t\to 0^+} \frac{\ln(\tau^{(2)}(E_B,\phi)(t))}{\ln(t)} \geqslant d$ and the inequality \eqref{equ2} implies that
$\lim_{t\to \infty} \frac{\ln(\tau^{(2)}(E_B,\phi)(t))}{\ln(t)} \leqslant 1+D$. Hence, since $x_{E_B}(\phi)=1+D-d$, it follows from 
Theorem \ref{thm:previous2} (3) that the two  limits of the previous sentence are respectively equal to $d$ and $1+D$.
Finally, it follows from the inequality  \eqref{equ2} and Theorem \ref{thm:previous2} (4) that 
 $C(E_B,\phi) \leqslant 1$.

We conclude using the fact that $C(E_B,\phi) \geqslant 1$ from Theorem \ref{thm:previous2} (4) (a).
\end{proof}

\begin{remark}
Recall that given a knot $K$, its Bing double $B(K)$ is the $2$-component link given by the image of the link $(B \setminus B_1) \subset E_{B_1}$ (a $2$-component sublink of the Borromean rings $B$ living in an unknot exterior) under the inclusion
$E_{B_1} \subset E_{B_1}\cup E_K \cong S^3$, where
the gluing identifies a preferred longitude of the component $B_1$ to a preferred meridian of $K$ and vice-versa.
Thus, it follows  easily from  Proposition \ref{prop boro} and Theorem~\ref{thm:previous} (2) and (3) that given any knot $K$ and any non-zero $\phi\in H^1(X_{B(K)};\Z)$ the leading coefficient of the corresponding $L^2$-Alexander torsion equals $\exp(\vol(K)/6\pi)$.
\end{remark}

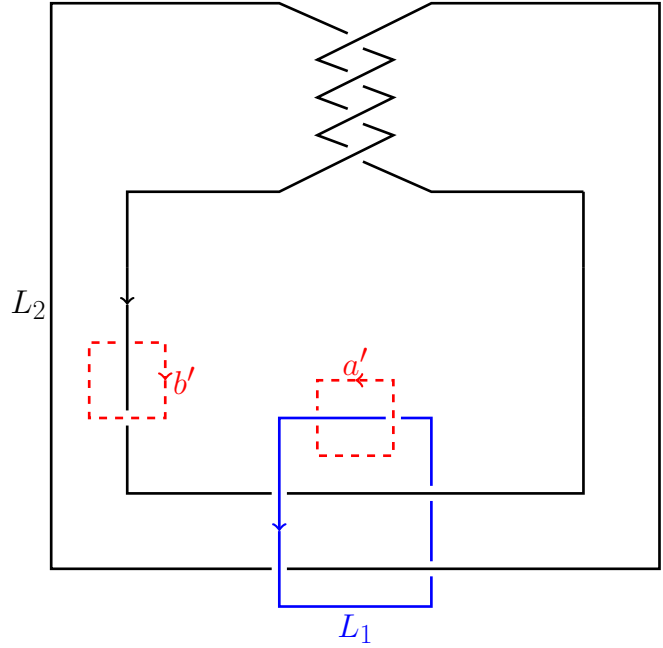
\begin{figure}[!h]
\centering
\begin{tikzpicture}[every path/.style={string ,black} , every node/.style={transform shape , knot crossing , inner sep=1.5 pt } ]

\begin{scope}[scale=0.5]

%B2
\draw (6,1) -- (6,-1);
\draw (6,-1) -- (6,-5);
\draw (6,-5) -- (6,-7) -- (-2+0.2,-7);
\draw (-2-0.2,-7) -- (-6,-7) -- (-6,-5-0.2);
\draw (-6,-5+0.2) -- (-6,-2);
\draw[->] (-6,-1) -- (-6,-2);

\draw (-6,-1) -- (-6,1) -- (-2,1) -- (1,2.5) -- (0+0.2,3-0.2);
\draw (0-0.2,2+0.2) -- (-1,2.5) -- (1,3.5)-- (0+0.2,4-0.2);
\draw (0-0.2,3+0.2) -- (-1,3.5) -- (1,4.5)-- (0+0.2,5-0.2);
\draw (0-0.2,4+0.2) -- (-1,4.5) -- (2,6) --(8,6) -- (8,-9) -- (-2+0.2,-9);
\draw (-2-0.2,-9) -- (-8,-9) -- (-8,6) -- (-2,6) -- (0-0.2,5+0.2);
\draw (0+0.2,2-0.2) -- (2,1) -- (6,1);

\draw  (-8.6,-2) node {\huge $L_2$} ;

%B1
\draw[color=blue] (-2,-8) -- (-2,-10) -- (2,-10) -- (2,-9-0.2);
\draw[color=blue] (2,-9+0.2) -- (2,-7-0.2);
\draw[color=blue] (2,-7+0.2) -- (2,-5) -- (1+0.2,-5);
\draw[color=blue][->] (1-0.2,-5) -- (-2,-5) -- (-2,-8);
\draw[color=blue]  (0,-10.6) node {\huge $L_1$} ;

%a
\draw[style=dashed][color=red][->] (-1,-5-0.2) -- (-1,-6) -- (1,-6) -- (1,-4) --(0,-4);
\draw[style=dashed][color=red] (0,-4) -- (-1,-4) -- (-1,-5+0.2);
\draw[color=red]  (0,-3.5) node {\huge $a'$} ;

%b
\draw[style=dashed][color=red][->] (-6+0.2,-3) -- (-5,-3) --(-5,-4);
\draw[style=dashed][color=red] (-5,-4) -- (-5,-5) -- (-7,-5) -- (-7,-3) -- (-6-0.2,-3);
\draw[color=red]  (-5+0.5,-4) node {\huge $b'$} ;

\end{scope}

\end{tikzpicture}
\caption{The $2$-component $m$-Whitehead link $W_m = L_1 \cup L_2$ (for $m=2$)} \label{fig m whitehead}
\end{figure}

Let us define the \textit{$m$-Whitehead link} $W_m$ as 
the image of two components of $B$ under 
 a $(1/m)$-Dehn filling on the boundary of a neighborhood of the third component of $B$ 
(with $m \geqslant 1$ and the $1$-Whitehead link being the classical Whitehead link), see Figure \ref{fig m whitehead}. Following Proposition \ref{prop boro} and \cite{BA16b} we can compute the leading coefficients for all $m$-Whitehead links:

\begin{prop} \label{prop white}
Let $W_m$ be the $2$-component $m$-Whitehead link, $E_{W_m} = S^3 \setminus \nu W_m$ the corresponding link exterior and let 
$\phi \in H^1(E_{W_m};\Q) \cong \Q^2$. 
If $ \phi \neq 0$ then 
$C(E_{W_m},\phi)=1$.
\end{prop}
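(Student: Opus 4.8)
The plan is to adapt the proof of Proposition~\ref{prop boro} to the exterior $E_{W_m}$, exploiting that $E_{W_m}$ is the $1/m$-Dehn filling of $E_B$ along the boundary torus of $B_3$. The lower bound $C(E_{W_m},\phi)\geqslant 1$ comes for free from Theorem~\ref{thm:previous2}~(4a) (equivalently \cite{Liu17}) because $\phi\neq 0$, so the entire content is the reverse inequality $C(E_{W_m},\phi)\leqslant 1$. I would first record the algebraic effect of the filling: passing from $B$ to $W_m$ adds the relation $c\,\ell_c^{\,m}=1$, where $\ell_c$ denotes the longitude of $B_3$; since $\ell_c$ lies in the commutator subgroup, the pulled-back class $\psi:=\phi|_{E_B}$ satisfies $\psi(c)=0$, while $\psi(a)=\phi(a')$ and $\psi(b)=\phi(b')$ are not both zero. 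Eliminating $c$ produces a two-generator presentation $\pi_1(E_{W_m})=\langle a',b'\mid\ldots\rangle$, which can equally well be read off from the Wirtinger presentation attached to Figure~\ref{fig m whitehead}.

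Next, exactly as in the proof of Proposition~\ref{prop boro}, I would use Fox calculus and \cite[Proposition~3.3]{BAC17} to produce an explicit representative of $\tau^{(2)}(E_{W_m},\phi)(t)$ of the form (a power of $\max\{1,t\}$) $\cdot\,\det_G(S_t)$, with $S_t$ a square matrix over $\Z[\pi_1(E_{W_m})]$; this is the step where \cite{BA16b} is invoked, as that is where the relevant computation for the $m$-Whitehead links is carried out (alternatively, one obtains $S_t$ from the Borromean matrix of Proposition~\ref{prop boro} by imposing the Dehn-filling relation and re-normalizing). I would then expand $S_t=t^{\,d}\!\bigl(U_0+\sum_{n\geqslant 1}t^{n}U_n\bigr)$ near $t=0^+$ and, after the analogous rewriting, $S'_t=t^{\,D}\!\bigl(V_0+\sum_{n\geqslant 1}t^{-n}V_n\bigr)$ near $t=\infty$, and run the same case analysis according to the signs of $\phi(a')$ and $\phi(b')$ (now additionally tracking the parameter $m$) to identify the leading matrices $U_0,V_0$. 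As in Proposition~\ref{prop boro}, each of these should be, up to sign, of the form $R_g$, $(R_g-\id)R_h$ or $(R_g-\id)(R_h-\id)$ with $g,h\in\pi_1(E_{W_m})$; I would check $g,h\neq 1$ (it suffices that their images in $H_1(E_{W_m})\cong\Z^2$ be nontrivial, or a direct check in the group), so that by Proposition~\ref{prop:op_det}~(6) and \cite[Section~3.2]{Lu02} the operators $U_0,V_0$ are injective with ${\det}^r_G(U_0)={\det}^r_G(V_0)=1$.

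Given these facts, the conclusion is formal. Upper semicontinuity of the Fuglede--Kadison determinant (Lemma~\ref{lem:uppersemicont}) yields $\limsup_{t\to 0^+}t^{-d}\,\tau^{(2)}(E_{W_m},\phi)(t)\leqslant 1$ and $\limsup_{t\to\infty}t^{-e-D}\,\tau^{(2)}(E_{W_m},\phi)(t)\leqslant 1$, where $e$ is the power of $\max\{1,t\}$ in the representative. Combining with $\tau^{(2)}(E_{W_m},\phi)(t)\sim C(E_{W_m},\phi)\,t^{k}$ as $t\to 0^+$ and $\sim C(E_{W_m},\phi)\,t^{k+x_{E_{W_m}}(\phi)}$ as $t\to\infty$ (Theorem~\ref{thm:previous2}~(3)--(4)) and with the Thurston-norm identity $x_{E_{W_m}}(\phi)=e+D-d$, read off from Figure~\ref{fig m whitehead} as in \cite{Th86}, forces $k=d$ and then $C(E_{W_m},\phi)\leqslant 1$. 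Together with the lower bound this gives $C(E_{W_m},\phi)=1$.

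The main obstacle is the middle step: obtaining $S_t$ and, above all, controlling its leading coefficients. Dehn filling genuinely changes the $L^2$-Alexander torsion --- the filling does not induce an injection of fundamental groups, so one cannot simply transplant the Borromean computation --- so the real work is the case-by-case identification of $U_0$ and $V_0$ and the verification that the group elements occurring there remain nontrivial after the relation $c\,\ell_c^{\,m}=1$ is imposed; this is precisely what is taken from \cite{BA16b}. The bookkeeping is heavier than in Proposition~\ref{prop boro} because of the extra parameter $m$, but no new idea beyond those already used there is required.
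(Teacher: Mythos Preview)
Your overall strategy---mimic the asymptotic analysis of Proposition~\ref{prop boro}---is sound, but the route you propose is not the one the paper takes, and your citation to \cite{BA16b} misidentifies its role. The paper does \emph{not} redo Fox calculus for a two-generator presentation of $\pi_1(E_{W_m})$; instead it invokes the Dehn surgery formula \cite[Proposition~4.3]{BA16b} to write
\[
\tau^{(2)}(E_{W_m},\phi)(t)\ \dot{=}\ \tau^{(2)}(E_B,\phi\circ\theta_m,\theta_m)(t),
\]
the right-hand side being the \emph{generalized} $L^2$-Alexander torsion of the Borromean exterior with coefficients pushed forward through $\theta_m\colon\pi_1(E_B)\twoheadrightarrow\pi_1(E_{W_m})$. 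This is the key economy: the matrices $S_t,S'_t$ and the tables of $(d,U_0)$ and $(D,V_0)$ from the proof of Proposition~\ref{prop boro} carry over \emph{verbatim}, only now viewed over the quotient group. Since $\gamma=(\phi\circ\theta_m)(c)=0$, only the $\gamma=0$ rows of those tables matter, and the single thing left to verify is that none of the three $U_0$ (resp.\ $V_0$) collapse to zero under $\theta_m$. That reduces to showing $\theta_m(a)$ and $\theta_m(c)$ are nontrivial in $\pi_1(E_{W_m})$.

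This last point is where your proposed verification has a genuine gap. You suggest checking nontriviality via the image in $H_1(E_{W_m})\cong\Z^2$. But $\theta_m(c)=[b',a'^{-1}]^{-m}$ is a commutator power, so its abelianization is trivial; the $H_1$-test fails precisely for the element that matters. The paper instead argues: $\pi_1(E_{W_m})$ is non-abelian (it is not the Hopf link group), hence $[b',a'^{-1}]\neq 1$, and torsion-freeness of link groups then gives $\theta_m(c)\neq 1$. Your direct-computation plan could in principle be carried out, but \cite{BA16b} does not contain the $W_m$-specific Fox calculus you attribute to it, so as written that step is unsupported; and the $m$-dependent relator would make the leading-term analysis substantially messier than the paper's reuse-and-check approach.
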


\begin{proof}

Let $m\geqslant 1$ and let
 $\phi \in H^1(E_{W_m};\Q)\setminus\{0\}$.
  We want to prove that $C(E_{W_m},\phi)=1$. In the remainder of the proof we will use several notations from Proposition \ref{prop boro} and its proof.

Firstly, up to scaling $\phi$ and reordering the components of $W_m$ (which does not change its isotopy type) we can assume that
$\phi$ sends any meridian of the second component of $W_m$ to $-1$. See Figure \ref{fig m whitehead} for a diagram of $W_m$ in the case $m=2$.

Now consider that $W_m$ is obtained via $(1/m)$-Dehn filling on the third boundary component of $E_B$,
 as in Figures \ref{fig boro} and \ref{fig m whitehead},
and let us call $\theta_m\colon\pi_1(E_B) \twoheadrightarrow \pi_1\left (E_{W_m}\right )$ the group epimorphism induced by this Dehn filling.
It follows that $\gamma=(\phi\circ\theta_m)(c)=0$. Moreover, it follows from the previous paragraph that  $(\phi\circ\theta_m)(b)=-1$.

We state the following crucial claim.
\begin{claim}
Among the three possible operators $U_0$ (respectively  $V_0$) listed in the proof of Proposition \ref{prop boro} for  $\gamma=0$, neither  becomes the zero operator under the epimorphism $\theta_m$.
\end{claim}

We will now prove the claim. 
Under the epimorphism $\theta_m$, the three operators $U_0$ (in the case $\gamma=0$) become right multiplications by the following elements of $\Z[\pi_1(E_{W_m})]$:
$$\theta_m(ac^{-1}a^{-1})-1, \
\left (1-\theta_m(ac^{-1}a^{-1} )\right )\left (\theta_m(ba^{-1}b^{-1})-1\right ), \
\theta_m(ba^{-1}b^{-1})\left (1-\theta_m(c^{-1})\right ),$$
and the three operators $V_0$ become right multiplications by the following elements:
$$\theta_m(ac^{-1})\left (\theta_m(c)-1\right ), \
\left (1-\theta_m(a)\right )\left (\theta_m(ac^{-1}a^{-1})-1\right ), \
\theta_m(ac^{-1}a^{-1})-1.$$
As in the proof of Proposition~\ref{prop boro}  it suffices to prove that
$a':=\theta_m(a)$ and $c':=\theta_m(c)$ are non trivial in $\pi_1(E_{W_m})$.

Denoting $b':=\theta_m(b)$, it follows from the Dehn filling process and the van Kampen theorem that $c' = \left [b',a'^{-1}\right ]^{-m}$ and that $\pi_1(E_{W_m})$ is generated by $a'$ and $b'$; indeed, as seen in Figure \ref{fig boro}, $l:=[b,a^{-1}]$ is a longitude of $B_3$, 
and $(1/m)$-Dehn filling on $E_B$ on the boundary component $\partial (\nu B_3)$ 
induces a quotient by the relation $c l^m = 1$ in fundamental groups.

Now, since $a'$ still represents a meridian in $\pi_1(E_{W_m})$, it is non trivial. Moreover 
 $\pi_1(E_{W_m})$ is non abelian (since the only $2$-component link with abelian group is the Hopf link \cite{Neu61} which has different linking number from $W_m$)
and generated by $a'$ and $b'$,
 thus $\left [b',a'^{-1}\right ]$ is not trivial, and since $\pi_1(E_{W_m})$ is torsion free
(see \cite[Section 3.2 (C.3)]{AFW15}), then $c'= \left [b',a'^{-1}\right ]^{-m}$ is not trivial either, which concludes the proof of the claim.

We now recall that from \cite{DFL16} one can define a more general $L^2$-Alexander torsion $\tautwo(N,\phi \circ  \theta,\theta)(t)$, associated to a $3$-manifold $N$ of fundamental group $G$, and two group homomorphisms $\phi\colon \Gamma\to \Z$ and $\theta\colon G \to \Gamma$ such that $\Gamma$ is residually finite. The process is mostly the same as in Section \ref{section:def-l2-torsion}, except that the operators are over $\ell^2(\Gamma)$ and are linear combinations of $R_{\theta(g)}$ for $g\in G$ (instead of $\ell^2(G)$ and $R_g$).
Note that in our case, $N$ will be $E_B$, $\pi_1\left (E_{W_m}\right )$ is  residually finite \cite{Hem87,AFW15} and will play the role of the aforementioned $\Gamma$, and $\theta$ will be the $\theta_m$ defined at the beginning of the proof.

In the proof of the claim we established that $\theta_m(l) = \left [b',a'^{-1}\right ]$ is non trivial in the torsion free group $\pi_1(E_{W_m})$, thus it has infinite order. Hence we can use the Dehn surgery formula for $L^2$-Alexander torsions from \cite[Proposition 4.3]{BA16b} to conclude that
$$\tautwo(E_{W_m},\phi)(t)
 \ \dot{=} \
\dfrac{\tautwo(E_B,\phi\circ \theta_m,\theta_m)(t)}
{\max\{1,t\}^{|\phi\circ\theta_m(l)|}}
=
\tautwo(E_B,\phi\circ \theta_m,\theta_m)(t),
$$
where the first equality comes from \cite[Proposition 4.3 and Section 4.4]{BA16b} and the second equality comes from the fact that $\theta_m(l)=\left [b',a'^{-1}\right ]$ is a  commutator.

Recall that we want to prove that $C(E_{W_m},\phi)=1$. It follows from the previous paragraph that 
$C(E_{W_m},\phi)$ is equal to the leading coefficient of $\tautwo(E_B,\phi\circ \theta_m,\theta_m)(t)$. Thus it suffices to prove that the leading coefficient of $\tautwo(E_B,\phi\circ \theta_m,\theta_m)(t)$  is equal to $1$.
Note that the previous statement is a variation of Proposition \ref{prop boro}, and we will prove it in a similar way.

We can extend  Proposition \ref{prop boro} and the bulk of its proof to the  $L^2$-Alexander torsion  $\tautwo(E_B,\phi\circ \theta_m,\theta_m)(t)$ instead of 
$\tautwo(E_B,\phi)(t)$, since all $L^2$-torsions and limits are well defined thanks to \cite[Proposition 4.3]{BA16b}. The only potential problem comes from the operators $U_0$ and $V_0$, which might become zero under the epimorphism $\theta_m$. Fortunately this is not the case, thanks to the previous claim.
 \end{proof}

In particular for $m=1$, it follows from Theorem~\ref{thm:previous} (1), (2) and (3) and Lemma \ref{lem:zero-phi} (2) that we can compute the leading coefficient of Whitehead doubles:

\begin{prop}
Let $K$ be a knot and $W(K)$ its untwisted Whitehead double. Then
$$C(E_{W(K)},\phi_{W(K)}) = \exp\left (\dfrac{\vol(E_K)}{6 \pi}\right ).$$
\end{prop}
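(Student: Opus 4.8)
The plan is to exploit the satellite structure of $W(K)$ together with the computation of the leading coefficient for the Whitehead link already carried out in Proposition~\ref{prop white}. Recall that $W(K)$ is obtained by inserting the untwisted Whitehead pattern into a tubular neighborhood $\nu K$ of $K$, and that this pattern is precisely the component $L_1$ of the classical Whitehead link $W_1=L_1\cup L_2$ viewed inside the solid torus $V:=S^3\sms\nu L_2$. Consequently the exterior decomposes as
\[ E_{W(K)} \,=\, E_K \cup_T E_{W_1}, \]
where $T:=\partial\nu K=\partial V$ and the gluing is along the $0$-framing (this is what ``untwisted'' means). We may assume $K$ is nontrivial; then $T$ is incompressible in $E_K$, and it is incompressible in $E_{W_1}$ since $W_1$ is hyperbolic, so $E_{W(K)}$ is obtained from the disjoint union $M:=E_K\sqcup E_{W_1}$ by gluing a pair of incompressible boundary tori. (When $K$ is trivial, $W(K)$ is again the unknot and $\vol(E_K)=0$, a degenerate case we set aside.)

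First I would pin down the two restrictions of $\phi:=\phi_{W(K)}$. On the $E_K$ summand the restriction vanishes: $H_1(E_K)=\Z\langle\mu_K\rangle$, and $\mu_K$ maps to $\operatorname{lk}(\mu_K,W(K))\cdot\mu_{W(K)}=0$ in $H_1(E_{W(K)})=\Z$, because the Whitehead pattern has winding number $0$ in the solid torus; hence $\phi|_{E_K}=0$. On the $E_{W_1}$ summand the restriction is nonzero, since a meridian of $L_1$ is a meridian of $W(K)$, on which $\phi$ takes the value $1$; thus $\phi|_{E_{W_1}}\neq 0\in H^1(E_{W_1};\Z)\cong\Z^2$. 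Note that for the argument I only need these two qualitative facts (vanishing, resp.\ nonvanishing), not the precise class on the $E_{W_1}$ side, which makes the bookkeeping robust against the choice of framing conventions.

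Then I would assemble the torsions. By Theorem~\ref{thm:previous}~(3) (invariance under gluing incompressible boundary tori) followed by Theorem~\ref{thm:previous}~(2) (disjoint unions),
\[ \tautwo(E_{W(K)},\phi)\,\doteq\,\tautwo(E_K,\phi|_{E_K})\cdot\tautwo(E_{W_1},\phi|_{E_{W_1}})\,=\,\tautwo(E_K,0)\cdot\tautwo(E_{W_1},\phi|_{E_{W_1}}). \]
By Lemma~\ref{lem:zero-phi}~(2) the first factor is the constant function with value $\tautwo(E_K)$, and by Lemma~\ref{lem:zero-phi}~(1) together with Theorem~\ref{thm:previous}~(1) this constant equals $\exp(\vol(E_K)/6\pi)$; in particular the leading coefficient of the first factor is $\exp(\vol(E_K)/6\pi)$. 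For the second factor, Proposition~\ref{prop white} with $m=1$ applied to the nonzero class $\phi|_{E_{W_1}}$ gives $C(E_{W_1},\phi|_{E_{W_1}})=1$. Since the leading coefficient is multiplicative under products of functions (if $f(t)\sim C_1t^{k_1}$ and $g(t)\sim C_2t^{k_2}$ then $(fg)(t)\sim C_1C_2\,t^{k_1+k_2}$, and it is insensitive to multiplication by $t^k$), I would conclude $C(E_{W(K)},\phi_{W(K)})=\exp(\vol(E_K)/6\pi)\cdot 1$, as claimed.

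The only non-formal ingredients are the two homological computations $\phi_{W(K)}|_{E_K}=0$ and $\phi_{W(K)}|_{E_{W_1}}\neq 0$ and the verification that the gluing torus is incompressible on both sides; once these are in place the statement follows by directly concatenating Theorem~\ref{thm:previous}, Lemma~\ref{lem:zero-phi}, and Proposition~\ref{prop white}. So I expect the main (mild) obstacle to be the careful topological bookkeeping of the satellite decomposition and its framing, rather than any analytic difficulty.
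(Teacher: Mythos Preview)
Your proof is correct and follows exactly the approach the paper indicates: decompose $E_{W(K)}=E_K\cup_T E_{W_1}$ along an incompressible torus, note that $\phi_{W(K)}$ restricts trivially to $E_K$ and nontrivially to $E_{W_1}$, and then combine Theorem~\ref{thm:previous}~(1),~(2),~(3), Lemma~\ref{lem:zero-phi}~(2), and Proposition~\ref{prop white} with $m=1$. The paper gives only this list of ingredients as its proof, so you have simply spelled out the details it leaves implicit.
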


One such example is $W(4_1)$, the Whitehead double of the figure-eight knot.

\begin{proof}
Let $K$ be a knot and $W(K)$ denote its untwisted Whitehead double. Thus we have $E_{W(K)} = E_{K} \cup E_{W_1}$, where $W_1$ is the Whitehead link. The gluing is such that $\phi_{W(K)}$ restricts to $0$ on $E_K$ and to a non-zero class $\phi_1$ on $E_{W_1}$. It thus follows from Theorem~\ref{thm:previous} (2) and (3) that 
$$C(E_{W(K)},\phi_{W(K)}) = C(E_K,0) \cdot  C(E_{W_1},\phi_1).$$
Now it follows from Proposition \ref{prop white} that $C(E_{W_1},\phi_1)=1$ and from Theorem~\ref{thm:previous} (1) and Lemma \ref{lem:zero-phi} (2) that 
$C(E_K,0)=\exp\left (\dfrac{\vol(E_K)}{6 \pi}\right )$, which concludes the proof.
\end{proof}

As an immediate consequence we obtain:

\begin{corollary}\label{cor:>1}
The set of leading coefficients $C(E_K,\phi_K)$ (where the index $K$ runs over the set of all knots) is infinite.

Furthermore, this set of leading coefficients contains a subset which is bijective to the set of hyperbolic volumes $\vol(E_K)$ of hyperbolic knots.
\end{corollary}

In conclusion, let us recap in the following remark the class of links for which we can compute the leading coefficient explicitly (at the time of writing).

\begin{remark}
We can explicitly compute all leading coefficients for the class of links containing 
\begin{itemize}
\item fibered knots,
\item libroid knots $($in particular $2$-bridge knots$)$,
\item  the Borromean rings and all the links obtained from them via Dehn fillings on their exterior,
\end{itemize}
and stable by 
\begin{itemize}
	\item connected sum,
	\item cabling,
	\item Bing doubling,
	\item any $m$-Whitehead doubling.
\end{itemize}
\end{remark}


\begin{thebibliography}{10}
\bibitem[Ag08]{Ag08}
I. Agol, {\em Criteria for virtual fibering}, J. Topol. 1 (2008), no. 2, 269--284.
\bibitem[Ag13]{Ag13}
I. Agol, {\em The virtual Haken conjecture}, with an appendix by I. Agol, D. Groves and J. Manning, 
	Documenta Math. 18 (2013), 1045--1087.
	
\bibitem[AD15]{AD15}
I. Agol and N. M. Dunfield, {\em Certifying the Thurston norm via $\operatorname{SL}(2, \mathbb{C})$-twisted homology}, to appear in the Thurston memorial conference proceedings, Princeton University Press, 23 pages (2015) 

\bibitem[AFW15]{AFW15}
M. Aschenbrenner, S. Friedl and H. Wilton, {\em 3-manifold groups}, EMS Series of Lectures in Mathematics. European Mathematical Society (2015)

\bibitem[BA16a]{BA16a}
F. Ben Aribi, {\em The $L^2$-Alexander invariant detects the unknot},
 Ann. Sc. Norm. Super. Pisa Cl. Sci. (5) Vol. XV (2016), 683-708.
 
\bibitem[BA16b]{BA16b}
F. Ben Aribi, {\em Gluing formulas for the $L^2$-Alexander torsions}, preprint (2016), to be published by the Commun.\ Contemp.  Math.

\bibitem[BAC17]{BAC17}
F. Ben Aribi and A. Conway, {\em $L^2$-Burau maps and $L^2$-Alexander torsions}, to be published by the Osaka Journal of Mathematics (2017).




\bibitem[Co04]{Co04}
T. Cochran, {\em Noncommutative knot theory}, Algebr. Geom.
Topol. {4} (2004), 347--398.

\bibitem[DFL15a]{DFL15a}
J. Dubois, S. Friedl and W. L\"uck, {\em $L^2$-Alexander torsions are symmetric}, Alg. Geom. Topology 15-6 (2015), 3599--3612. 

\bibitem[DFL15b]{DFL15b}
J. Dubois, S. Friedl and W. L\"uck, {\em Three flavors of twisted invariants of knots}, 
Introduction to Modern Mathematics, Advanced Lectures in Mathematics 33 (2015), 143--170.

\bibitem[DFL16]{DFL16}
J. Dubois, S. Friedl and W. L\"uck, {\em
The $L^2$-Alexander torsion of 3-manifolds},
Journal of Topology 9, No. 3 (2016), 889--926. 

\bibitem[DW15]{DW15}
J. Dubois and C. Wegner, {\em $L^2$-Alexander invariant for knots}, Commun. Contemp. Math. 17 (2015), no. 1, 1450010, 29 pp.

\bibitem[EN85]{EN85}
D. Eisenbud and W. Neu\-mann, {\em Three-dimensional Link Theory and
Invariants of Plane Curve Singularities}, Annals of Mathematics Studies, vol. 110, Princeton University Press, Princeton, NJ, 1985. 

\bibitem[FL15]{FL15}
S. Friedl and W. L\"uck, {\em The $L^2$-torsion function and the Thurston norm of 3-manifolds}, Preprint (2015). To be published by Commentarii Math. Helvitici

\bibitem[FV10]{FV10}
S. Friedl and S. Vidussi,
 {\em A survey of twisted Alexander polynomials}, The Mathematics of Knots: Theory and Application (Contributions in Mathematical and Computational Sciences), editors: Markus Banagl and Denis Vogel (2010), 45--94.
 
\bibitem[FKP13]{FKP13}
D. Futer, E. Kalfagianni and J. Purcell, {\em Guts of surfaces and the colored Jones polynomial}, 
 Lecture Notes in Mathematics 2069. Springer (2013).
 
\bibitem[Ga83]{Ga83}
D. Gabai, {\em Foliations and the topology of 3-manifolds}, J. Differential Geometry 18
(1983), no. 3, 445--503.

\bibitem[GL05]{GL05}
S. Garoufalidis and T. T. Le, {\em An analytic version of the Melvin-Morton-Rozansky conjecture}, arXiv preprint math/0503641 (2005).

 
\bibitem[Hem87]{Hem87} J. Hempel, {\em Residual
finiteness for $3$-manifolds}, Combinatorial group theory and topology (Alta, Utah, 1984),
379--396, Ann.\ of Math. Stud., 111 (1987)

\bibitem[Her16]{Her16}
G. Herrmann, {\em The $L^2$-Alexander torsion for Seifert fiber space}, preprint (2016), 	arXiv:1602.08768. To be published by the Archiv der Mathematik.

\bibitem[Her18]{Her18}
G. Herrmann, {\em Sutured manifolds and $L^2$-Betti numbers},
preprint (2018), arXiv:1804.09519.

\bibitem[Ja80]{Ja80}
W. Jaco, {\em Lectures on three-manifold topology}, CBMS Regional Conference Series in
Mathematics, 43. American Mathematical Society, Providence, R.I. (1980). 

\bibitem[LZ06]{LZ06}
W. Li and W. Zhang, {\em An $L^2$-Alexander invariant for knots}, Commun. Contemp. Math. 8
(2006), no. 2, 167--187.

\bibitem[Lin05]{Lin05}
X. S. Lin, {\em $L^2$-Alexander invariants}, slides for a talk at the conference
AMS-IMS-SIAM
Joint Meeting, Quantum Topology:
Contemporary Issues and Perspectives (2005)\\
\url{http://math.ucr.edu/~xl/snowbird.pdf}


\bibitem[Liu17]{Liu17}
Y. Liu, {\em 
Degree of $L^2$-Alexander torsion for 3-manifolds},
Invent. Math. 207 (2017), 981--1030.

\bibitem[{L\"u02}]{Lu02}
W. L\"uck, {\em $L^2$-Invariants: Theory and Applications to Geometry and K-Theory}, Springer-Verlag Berlin Heidelberg, Survey in Math., 44 (2002)

\bibitem[{L\"u15}]{Lu15}
W. L\"uck, {\em Twisting {$L^2$}-invariants with finite-dimensional representations}, Preprint, arXiv:1510.00057, to appear in the Journal of Topology and Analysis.

\bibitem[L\"uS99]{LS99}
W. L\"uck and T. Schick, {\em $L\sp 2$-torsion of hyperbolic manifolds of finite volume}, Geom.
Funct. Anal. 9 (1999), no. 3, 518--567.

\bibitem[McM02]{McM02}
C. McMullen, {\em The Alexander polynomial of a 3-manifold and the Thurston norm on cohomology}, 
Ann. Sci. \'Ecole Norm. Sup. (4) 35 (2002), no. 2, 153--171.

\bibitem[Men84]{Men84}
W. Menasco, {\em Closed incompressible surfaces in alternating knot and link complements}, Topology Vol. 23. No. I. (1984), 37--44.


\bibitem[MT88]{MT88}
H. R. Morton and P. Traczyk, \emph{The Jones polynomial of satellite links around mutants}
, In ‘Braids’, (Joan S. Birman
and Anatoly Libgober, eds.), Contemporary Mathematics
78, Amer. Math. Soc. (1988), 587--592.

\bibitem[Mur11]{Mur11}
H. Murakami, {\em An introduction to the volume conjecture}, Interactions between hyperbolic geometry, quantum topology and number theory, Contemp. Math. 541 (2011),  1--40.

\bibitem[MM01]{MM01}
H. Murakami and J. Murakami, {\em The colored Jones polynomials and the simplicial volume of a knot}, Acta Math. 186 (2001), no. 1, 85--104.

\bibitem[Neu61]{Neu61}
L. Neuwirth, {\em A note on torus knots and links determined by their groups},  Duke Math. J. 28 (1961) 545-551.55.20

\bibitem[PW12]{PW12}
 P. Przytycki and D. Wise, {\em Mixed $3$-manifolds are virtually special}, Preprint (2012), arXiv:1205.6742. To be published by the J. Amer. Math. Soc.

 
\bibitem[Se03]{Se03}
J.-P. Serre, {\em Trees}, Corrected 2nd printing of the 1980 original,
Springer Monographs in Mathematics (2003). 

%
\bibitem[Th79]{Th79}
W. P. Thurston, {\em  Geometry and topology of three-manifolds}, Princeton lecture notes, 1979. Revised version, 2002.
%
\bibitem[Th86]{Th86}
W. P. Thurston, {\em A norm for the homology of 3--manifolds}, Mem. Amer. Math. Soc. 59, no.
339 (1986), 99--130.

\bibitem[Tu01]{Tu01}
V. Turaev, {\em Introduction to combinatorial torsions}, Birkh\"auser, Basel, (2001)

\bibitem[Tu02]{Tu02}
V. Turaev, {\em A homological estimate for the Thurston norm}, unpublished paper (2002), arXiv:math/0207267

\bibitem[Wa94]{Wa94}
M. Wada, {\em Twisted Alexander polynomial for finitely presentable groups}, Topology 33, no. 2 (1994), 241--256.

\bibitem[Wi12]{Wi12}
D. Wise, {\em From riches to RAAGs: $3$-manifolds, right--angled Artin groups, and cubical geometry}, CBMS Regional Conference Series in Mathematics, 2012.
\end{thebibliography}
\end{document}